\documentclass{amsart}
\title[Transverse K\"ahler structures on central foliations]
{Transverse K\"ahler structures on central foliations of  complex manifolds}

\author{Hiroaki Ishida}
\address{Department of Mathematics and Computer Science, Graduate School of Science and Engineering, Kagoshima University}
\email{ishida@sci.kagoshima-u.ac.jp}
\author{Hisashi Kasuya}
\address{Department of Mathematics, Graduate School of Science, Osaka University, Osaka, Japan.}
\email{kasuya@math.sci.osaka-u.ac.jp}

\date{\today}
\thanks{The first author was supported by JSPS Grant-in-Aid for Young Scientists (B) 16K17596. }

\keywords{transverse K\"{a}hler structure, central foliation, basic cohomology, basic Dolbeault cohomology, mixed Hodge structure}
\subjclass[2010]{Primary 32Q55, Secondary 37F75, 55P62, 57N65, 58A14}

\usepackage{amssymb}
\usepackage{amsmath}
\usepackage{amscd}
\usepackage{amstext}
\usepackage{amsfonts}
\usepackage[all]{xy}
\usepackage{color}

\newif\ifdebug

\debugfalse

\newcommand{\ishida}[1]{\ifdebug{{\ \scriptsize{\color{red}{石田:\ #1}}\ }}\fi}

\newcommand{\kasuya}[1]{\ifdebug{{\ \scriptsize{\color{red}{糟谷:\ #1}}\ }}\fi}

\newcommand{\printname}[1]{\smash{\makebox[0pt]{\hspace{-1.0in}\raisebox{8pt}{\tiny #1}}}}

\newcommand{\Label}[1]{\ifdebug{\label{#1}\printname{#1}}\else{\label{#1}}\fi}

\newtheorem{prop}{Proposition}[section]
\newtheorem*{prop*}{Proposition \referenza}
\newtheorem{thm}[prop]{Theorem}
\newtheorem*{thm*}{Theorem \referenza}
\newtheorem{cor}[prop]{Corollary}
\newtheorem*{cor*}{Corollary \referenza}
\newtheorem{lemma}[prop]{Lemma}

\theoremstyle{definition}
\newtheorem{defi}[prop]{Definition}
\newtheorem{rem}[prop]{Remark}
\newtheorem{ex}[prop]{Example}
\newtheorem{pro}[prop]{Problem}
\newcommand{\C}{\mathbb{C}}
\newcommand{\R}{\mathbb{R}}

\newcommand{\Q}{\mathbb{Q}}

\newcommand{\g}{\frak{g}}

\newcommand{\h}{\frak{h}}

\begin{document} 

\maketitle

\begin{abstract}
For a compact complex manifold, we introduce holomorphic  foliations associated with certain abelian subgroups of the automorphism group.
If there exists a transverse K\"ahler structure on such a foliation, then we obtain a nice differential graded algebra which is quasi-isomorphic to the de Rham complex and a nice differential bigraded algebra which is quasi-isomorphic to the Dolbeault complex like the formality of compact K\"ahler manifolds.
Moreover,  under certain additional condition, we can develop Morgan's  theory of mixed Hodge structures as similar to the study on smooth algebraic varieties.

\end{abstract}
\section{Introduction}
	When a connected Lie group $H$ acts on a smooth manifold $M$ local freely, \ishida{thenを取りました} we have a smooth foliation $\mathcal{F}$ whose leaves are $H$-orbits. In addition, if $M$ is a complex manifold, $H$ is a complex Lie group and the $H$-action is holomorphic, then the foliation $\mathcal{F}$ is holomorphic. 
We are interested in transverse complex geometry on a foliated manifold $(M,\mathcal F)$.
Denote by $\Omega^{\ast}(M)$ the space of the differential forms on $M$.
We say that $\omega\in \Omega^{\ast}(M) $ is {\em basic} if $i_{X_{v}}\omega=0$ and $L_{X_{v}}\omega=0$ for any $v\in \frak h$, where \ishida{追加} $X_v$ denotes the fundamental vector field generated by $v \in \frak h$, and \ishida{追加ここまで} $i_{X_{v}}$ and  $L_{X_{v}}$ are the intrerior \ishida{old:inner} product and the Lie derivation \ishida{with $X_v$を追加} with $X_v$, respectively.
For a holomorphic foliation $\mathcal  F$  on a complex manifold $M$ with the complex structure $J$,  a transverse K\"ahler structure on $\mathcal F$ is a 
closed real basic $(1,1)$-form $\omega$
such that $\omega_p (v,Jv) \geq 0$ for any $v \in T_pM$ and $p \in M$, and the equality holds if and only if $v$ sits in the subspace $T_p\mathcal{F}$ that consists of all vectors tangent to the leaf through $p$. 

In this paper we introduce an intrinsically defined holomorphic foliation for arbitrary compact complex manifold, that we call \emph{canonical foliation}. Let $M$ be a compact complex manifold.
Let $G_{M}$ be the identity component of the group of all biholomorphisms on $M$.  Let $T$ be a maximal compact torus of $G_{M}$ and $\mathfrak{t}$ the Lie algebra of $T$. Let $J$ be the complex structure on the Lie algebra of $G_M$. Put 
\begin{equation*}
	\h_{M} := \mathfrak{t} \cap J \mathfrak{t}
\end{equation*}
and denote by $H_{M}$ the corresponding Lie subgroup of $G_{M}$. Then $H_{M}$ acts on $M$ local freely (see \cite{I}). 
Moreover $H_{M}$ is a central subgroup  in $G_M$ and $H_{M}$  does not depend on the choice of $T$ (see Lemma \ref{lemmaonH}). 
By the local freeness, for any connected subgroup $H\subset H_{M}$, we have the holomorphic foliation ${\mathcal  F}_{H}$.
We call ${\mathcal  F}_{H}$ a {\em central foliation} associated with $H$ and ${\mathcal F}_{H_M}$ the \emph{canonical foliation}. 
If $H$ is a compact complex torus, 
then the central foliation ${\mathcal  F}_{H}$  associated with $H$ gives a holomorphic principal Seifert bundle structure on a complex manifold $M$ over the complex orbifold $M/H$ (see \cite{Or}).
Moreover, if the action of $H$ is free, then such Seifert bundle is  a  holomorphic principal torus bundle over a complex manifold.
Conversely,   holomorphic principal torus bundle structure gives a holomorphic free complex torus action.
Thus, a central foliation is a generalization of a holomorphic principal torus bundle.

The purpose of this paper is to study (non-K\"ahler) complex manifolds admitting a transverse K\"ahler  structure on a central foliation ${\mathcal  F}_{H}$.  Especially, we study the de Rham and Dolbeault complexes of such complex manifolds. 
Typical  examples are holomorphic principal torus bundles over compact K\"ahler manifolds. 
A Calabi-Eckmann manifold is a holomorphic principal torus bundle over $\C P^m \times \C P^n$, and its underlying smooth manifold is diffeomorphic to $S^{2m+1} \times S^{2n+1}$. 
Extending  Calabi-Eckmann's construction, Meersseman  constructed a large class of non-K\"{a}hler compact complex manifolds.
Such complex manifolds are called \emph{LVM manifolds} (see \cite{M}, \cite{MV}).
Every LVM manifold admits a  transverse K\"ahler  structure (see \cite{M}). 
Among LVM manifolds with the canonical foliations, some are principal Seifert bundles. At that time the leaf space $M/\mathcal{F}_{H_M}$ is a projective toric variety (\cite{MV}). However there are many LVM manifolds with the canonical foliations which are not principal Seifert bundles. 

 \kasuya{追加}We notice that our object appears in a \ishida{specialをcertainに変更}certain  non-K\"aher Hermitian manifold.
 A Vaisman manifold is a non-K\"aher locally conformal K\"ahler  manifold with the  non-zero parallel Lee form.
There are many important non-K\"ahler manifolds which are Vaisman (e.g. Hopf manifolds, Kodaira-Thurston manifolds).
On any Vaisman manifold, there exists a   complex $1$-dimensional central foliation with a transverse K\"ahler  structure
which is canonically determined by its Vaisman structure.

For a complex manifold $M$ with a holomorphic foliation $\mathcal F$, 
we consider the basic de Rham complex $\Omega^{*}_{B}(M)$, basic Dolbeault complex $\Omega_{B}^{*,*}(M)$, 
 basic de Rham cohomology $H^{\ast}_{B}(M)$ and basic Dolbeault cohomology $H^{\ast,\ast}_{B}(M)$ for  $\mathcal F$.
If there exists a transverse K\"{a}hler form with respect to $\mathcal F$ and $\mathcal F$ is homologically oriented, then there is the Hodge decomposition
\[H^{r}_{B}(M,\C)=\bigoplus_{p+q=r}H^{p,q}_{B}(M),
\] 
\[\overline{H^{p,q}_{B}(M)}=H^{q,p}_{B}(M)
\]
(see \cite{EKA}).

\begin{defi}
\begin{itemize}
\item
For a manifold $M$, a (de Rham) model of $M$ is a differential graded algebra (shortly DGA) $A^{\ast}$
such that  $A^{\ast}$ is quasi-isomorphic to the de Rham complex $\Omega^{\ast}(M)$ i.e.\ there exists a sequence of DGA homomorphisms
\[A^{*}\leftarrow C^{*}_{1}\rightarrow C^{*}_{2}\leftarrow\cdot \cdot \cdot \leftarrow C^{*}_{n}\rightarrow \Omega^{*}(M)
\]
such that all the morphisms  are  quasi-isomorphisms (i.e.\ inducing cohomology isomorphisms).
\item 
For a complex manifold $M$, a Dolbeault-model of $M$ is a differential bi-graded algebra (shortly DBA) $B^{\ast,\ast}$ such that  $B^{\ast,\ast}$ is quasi-isomorphic to the Dolbeault complex $\Omega^{\ast,\ast}(M)$
i.e.\ there exists a sequence of DBA homomorphisms
\[B^{*,*}\leftarrow C^{*,*}_{1}\rightarrow C^{*,*}_{2}\leftarrow\cdot \cdot \cdot \leftarrow C^{*,*}_{n}\rightarrow \Omega^{*,*}(M)
\]
such that all the morphisms  are quasi-isomorphisms.

\end{itemize}
\end{defi}

On a compact K\"ahler manifold $M$, the de Rham cohomology $H^{*}(M)$ with the trivial differential is a  model of $M$ (Formality \cite{DGMS}) and the Dolbeault cohomology $H^{\ast,\ast}(M)$ with the trivial differential  is  a Dolbeault-model of $M$ (Dolbeault-Formality \cite{NT}).
In this paper we prove:

\begin{thm}[See also Theorem \ref{theorem-model}]
Let  $M$ be a compact complex manifold.
We assume that $M$ admits a   transverse K\"ahler structure on a complex $k$-dimensional central foliation ${\mathcal F}_{H}$.
Then there exists a model $A^*$ of $M$ with a differential $d$ and a Dolbeault-model $B^{*,*}$ of $M$ with a differential $\bar \partial$ satisfying the followings:
\begin{enumerate}
	\item Let $W$ be a real $2k$-dimensional vector space with a direct sum decomposition $W\otimes \C=W^{1,0}\oplus W^{0,1}$ satisfying $\overline{W^{1,0}}=W^{0,1}$.
As graded algebras, $A^* =H^*_B(M)\otimes \bigwedge W$. As bi-graded algebras, $B^{*,*} = H^{*,*}_B(M) \otimes \bigwedge (W^{1,0} \oplus W^{0,1})$. Here, the degree of an element in $W$ is $1$ and bi-degree of an element in $W^{1,0}$ (respectively, $W^{0,1})$ is $(1,0)$ (respectively, $(0,1)$). 
	\item The differentials $d$ and $\bar\partial$ are trivial on $H^*_B(M)$ and $H^{*,*}_B(M)$ respectively. $dW \subset H^2_B(M)$, $\bar\partial W^{1,0}\subset H^{1,1}_B(M)$ and $\bar\partial W^{0,1}\subset H^{0,2}_B(M)$. 
\end{enumerate}
\end{thm}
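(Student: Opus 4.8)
The plan is to connect the full complexes of $M$ to the asserted models in two stages: first enlarge the basic complex by ``connection'' generators dual to the leaf directions and prove that this enlargement already computes the full (Dolbeault) de Rham cohomology, and then use transverse K\"ahler formality to collapse the basic factor onto its cohomology while keeping the generator differentials valued in $H^{2}_{B}(M)$ (respectively $H^{1,1}_{B}(M)$ and $H^{0,2}_{B}(M)$).

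First I would fix the connection forms. Write $X_{1},\dots,X_{k}$ for the holomorphic fundamental vector fields of the $H$-action; since $\h\subset\ft\cap J\ft$ is abelian and $J$-invariant they are of type $(1,0)$, commute, and frame $T\mathcal{F}_{H}\otimes\C$. Using the $T$-invariance of the transverse K\"ahler metric furnished by \cite{I}, I would construct an $H$-invariant, conjugation-invariant horizontal complement to $T\mathcal{F}_{H}$ and let $\eta^{1},\dots,\eta^{k}$ be the associated $(1,0)$-forms with $\eta^{i}(X_{j})=\delta^{i}_{j}$ and $i_{\bar X_{j}}\eta^{i}=0$. Invariance gives $L_{X_{j}}\eta^{i}=0$, and together with $[X_{i},X_{j}]=0$ this yields $i_{X_{j}}d\eta^{i}=0$, so the curvatures $d\eta^{i}$ are basic; because $\eta^{i}$ has type $(1,0)$ and $X_{i}$ is holomorphic, $\bar\partial\eta^{i}$ is basic of type $(1,1)$ and $\bar\partial\bar\eta^{i}$ basic of type $(0,2)$. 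Setting $W^{1,0}=\langle\eta^{1},\dots,\eta^{k}\rangle$, $W^{0,1}=\overline{W^{1,0}}$ and $W=W^{1,0}\oplus W^{0,1}\cong\h^{*}$ gives the space in the statement.

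Next I would introduce the intermediate models $(\Omega^{*}_{B}(M)\otimes\bigwedge W,\,D)$ and $(\Omega^{*,*}_{B}(M)\otimes\bigwedge(W^{1,0}\oplus W^{0,1}),\,\bar D)$, with $D$ and $\bar D$ equal to $d$, $\bar\partial$ on the basic factor and given on generators by the curvatures above, extended as derivations; these square to zero because the curvatures are closed basic forms. The wedge maps $\alpha\otimes\eta^{I}\mapsto\alpha\wedge\eta^{I}$ into $\Omega^{*}(M)$ and $\Omega^{*,*}(M)$ are DGA (respectively DBA) homomorphisms, and I would prove they are quasi-isomorphisms by a spectral sequence: filtering the source by $\bigwedge^{\ge p}W$ and the target by the horizontal (foliation) filtration, the induced map on $E_{1}$ is the identity tensored with the leafwise cohomology, which for the locally free abelian action of $H$ is $\bigwedge\h^{*}\cong\bigwedge W$, the classes of the $\eta^{I}$; here homological orientability of $\mathcal{F}_{H}$, guaranteed by the transverse K\"ahler volume, is what identifies the top leafwise class. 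Finally I would collapse the basic factor: the transverse K\"ahler structure gives the basic $dd^{c}$- and $\partial\bar\partial$-lemmas \cite{EKA}, hence formality of $(\Omega^{*}_{B}(M),d)$ and Dolbeault-formality of $(\Omega^{*,*}_{B}(M),\bar\partial)$ via the zig-zags of \cite{DGMS} and \cite{NT}. Using these lemmas I would first modify each $\eta^{i}$ by a basic $1$-form so that its curvature is harmonic, whence $d\eta^{i}\in\ker d^{c}$ and $\bar\partial\eta^{i},\bar\partial\bar\eta^{i}\in\ker\partial$ descend to classes in $H^{2}_{B}(M)$, $H^{1,1}_{B}(M)$, $H^{0,2}_{B}(M)$. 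Tensoring every term of the two zig-zags with $\bigwedge W$ (respectively $\bigwedge(W^{1,0}\oplus W^{0,1})$) and defining the generator differentials by the harmonic curvatures on the middle terms and by their cohomology classes on the right, all maps stay quasi-isomorphisms because the extra factor is a fixed finite-dimensional free module; comparing the two stages yields $A^{*}=(H^{*}_{B}(M)\otimes\bigwedge W,d)$ and $B^{*,*}=(H^{*,*}_{B}(M)\otimes\bigwedge(W^{1,0}\oplus W^{0,1}),\bar\partial)$ with the stated properties.

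The main obstacle will be the first stage: producing an $H$-invariant, conjugation-symmetric connection whose curvature is basic of pure type and can simultaneously be made harmonic, despite $H$ being possibly non-compact so that naive averaging over $H$ is unavailable. I expect to circumvent this by averaging only over the maximal compact torus and exploiting the $T$-invariance and transverse Hodge theory of the basic complex from \cite{I} and \cite{EKA}; commutativity of the $X_{i}$ is exactly what forces the curvature to be basic and of pure Hodge type. Once the connection and the identification of leafwise cohomology with $\bigwedge\h^{*}$ are in hand, the passage through the formality zig-zags is routine.
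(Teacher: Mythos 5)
Your overall architecture --- invariant connection forms, the intermediate model $\Omega^{*}_{B}(M)\otimes\bigwedge W$, then collapsing the basic factor via the basic $\partial_B\bar\partial_B$-lemma and Hirsch extensions --- is the same as the paper's, and your final stage is essentially Lemma~\ref{hir} and Proposition~\ref{prop:quasiiso}. But the step on which everything rests, namely that the wedge map $\Omega^{*}_{B}(M)\otimes\bigwedge W\to\Omega^{*}(M)$ is a quasi-isomorphism, is not proved by your argument. You filter the target by the foliation filtration and claim that the $E_{1}$-page is ``the identity tensored with the leafwise cohomology, which for the locally free abelian action of $H$ is $\bigwedge\mathfrak{h}^{*}$.'' This identification fails precisely in the situations the theorem is designed for: $H$ is in general a \emph{non-closed} subgroup, dense in a compact torus $T'$, so its orbits are non-compact, and the leafwise (tangential) cohomology of such a foliation is not $\bigwedge\mathfrak{h}^{*}$; it is typically infinite-dimensional and non-Hausdorff and depends on Diophantine-type properties of the action. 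Already for the Kronecker flow on $T^{2}$ the degree-one leafwise cohomology is $\R$ for Diophantine slope and infinite-dimensional for Liouville slope; Hopf and LVM manifolds, the motivating examples here, have exactly such dense leaves. Homological orientability does not repair this. The paper avoids leafwise cohomology altogether: by Proposition~\ref{cptderham} (averaging over the compact closure $T'$ of $H$) the inclusion $\Omega^{*}(M)^{T'}\subset\Omega^{*}(M)$ is a quasi-isomorphism, by Lemma~\ref{lemmaGeqH} one has $\Omega^{*}(M)^{T'}=\Omega^{*}(M)^{H}$, and Proposition~\ref{HeqW} establishes the \emph{exact algebraic identity of complexes} $\Omega^{*}(M)^{H}=\Omega^{*}_{B}(M)\otimes\bigwedge W$ by an induction on the ``$q$-type'' of invariant forms --- no spectral sequence is needed.

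A second, independent gap is the Dolbeault half. Even granting the de Rham quasi-isomorphism, the analogous statement for $\Omega^{*,*}_{B}(M)\otimes\bigwedge(W^{1,0}\oplus W^{0,1})\to\Omega^{*,*}(M)$ requires knowing that the induced action of $T'$ on Dolbeault cohomology is trivial, which is \emph{not} automatic for a connected group of biholomorphisms (Lescure \cite{Le} gives examples of connected groups acting nontrivially on Dolbeault cohomology; this is exactly why the paper cites it). The paper proves triviality in Proposition~\ref{prop:torusdol} and Corollary~\ref{cor:apdol}: the representation of $H$ on $H^{*,*}(M)$ is holomorphic (since $H$ is a complex Lie group acting holomorphically) and simultaneously unitary (since it extends to the compact group $T'$), hence trivial, and then density of $H$ in $T'$ forces triviality of the $T'$-action; only then does averaging give the Dolbeault quasi-isomorphism. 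Your proposal never addresses this, and with the spectral-sequence route unavailable the Dolbeault model has no proof. Finally, the obstacle you single out as the main difficulty --- making the connection invariant and its curvature harmonic --- is in fact the easy part: invariance comes from averaging over the compact torus (Lemma~\ref{lemma:connection}), and the harmonic-representative adjustment can be bypassed entirely by Lemma~\ref{hir}, which transports Hirsch extensions across quasi-isomorphisms and lets one replace the curvature by any cohomologous representative.
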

In \cite{Ta}, Tanr\'e constructed a Dolbeault-model for a holomorphic principal torus bundle over a compact K\"ahler manifolds. The theorem above slightly generalizes the result of Tanr\'e. 

\ishida{追加} More precisely, a vector $W$ as in the theorem is a $2k$-dimensional subspace of $\Omega^1(M)^H$ such that the bilinear map $\mathfrak{h} \times W \ni (v,w) \mapsto i_{X_v}w \in \R$ is non-degenerate, where $\mathfrak{h}$ is the Lie algebra of $H$ and $\Omega^1(M)^H$ is the $H$-invariant subspace of $\Omega^1(M)$. We can choose $W$ to be closed under the complex structure of $\Omega^1(M)$. Then $W^{1,0}$ and $W^{0,1}$ are defined as $(1,0)$-part and $(0,1)$-part of $W \otimes \C$, respectively. 
The differential $W \to H^2_B(M)$ is given by $w \mapsto [dw]_B$, where $[dw]_B$ denotes the basic cohomology class represented by $dw \in \Omega^2_B(M)$. Similarly, the differentials $W^{1,0} \to H^{1,1}_B(M)$ and $W^{0,1} \to H^{0,2}_B(M)$ are given by $w \to [\bar\partial w]_B$, where where $[\bar\partial w]_B$ denotes the basic Dolbeault cohomology class represented by $\bar\partial w \in \Omega^2_B(M) \otimes \C$. 

\ishida{old: More precisely, a vector space $W$ as in the theorem is a $k$-dimensional $\C$-subspace $W\subset \Omega^{1}(M)^{H}$ 
such that the bilinear map $\h\times W\ni (v,w)\mapsto i_{X_v}w\in \R$ is non-degenerate, where $\frak h$ is the Lie algebra of $H$ and  $\Omega^{*}(M)^{H}$ is the space of the $H$-invariant differential forms.
The differential $W\to H^{2}_{B}(M)$ (resp.\ $ W^{1,0}\to  H^{1,1}_{B}(M)$ and $ W^{0,1}\to H^{0,2}_{B}(M)$) for ${A}^{*}=H^{*}_{B}(M)\otimes \bigwedge W$ (resp.\ ${B}^{*,*}=H^{*,*}_{B}(M)\otimes \bigwedge (W^{1,0}\oplus W^{0,1})$) is given by
$W\ni w\mapsto [dw]_{B}\in H^{2}_{B}(M)$ (resp.\ $W^{1,0}\ni w\mapsto [\bar\partial w]_{B}\in H^{1,1}_{B}(M)$ and $W^{0,1}\ni w\mapsto [\bar\partial w]_{B}\in H^{0,2}_{B}(M)$).}

\kasuya{追加}
By these results, we can construct explicit  de Rham and Dolbeault models of \ishida{vaismanをVaismanにした}Vaisman manifolds \ishida{括弧をつけて前の文にひっつけました}(see Section \ref{vais}).
Recently, similar de Rham models are also constructed  in \cite{CNMY}.

\begin{defi}\label{speci}
A central foliation ${\mathcal  F}_{H}$
 is {\em fundamental} if  for any $w\in W$, $[dw]_{B}\in H^{2}_{B}(M)$  is represented  by a closed basic $(1,1)$-form. 
\end{defi}

We prove:
\begin{thm}[See also Theorem \ref{theorem-MHS}]

Let  $M$ be a compact complex manifold.
We assume that $M$ admits a    transverse K\"ahler structure on a fundamental central foliation ${\mathcal  F}_{H}$.
Then the de Rham cohomology  of $M$ admits an $\R$-mixed Hodge structure so that:
\begin{enumerate}
\item $H^{1}(M,\C)=H^{1}_{1,0}\oplus H^{1}_{0,1}\oplus H_{1,1}^{1}$
\item $H^{2}(M,\C)=H^{2}_{2,0}\oplus H^{2}_{1,1}\oplus H_{0,2}^{2}\oplus H^{2}_{2,1}\oplus H^{2}_{1,2}\oplus H_{2,2}^{2}$
\end{enumerate}
 and  Sullivan's minimal model of the complex valued de Rham complex admits the Morgan's bigrading (\cite{Mor}).
\end{thm}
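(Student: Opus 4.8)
The plan is to build the mixed Hodge structure directly on the de Rham model $A^* = H^*_B(M) \otimes \bigwedge W$ provided by the first theorem, and then transport it to the de Rham cohomology of $M$ via Morgan's formalism. First I would exploit the transverse K\"ahler structure: by the cited Hodge decomposition for basic cohomology, each $H^n_B(M,\C)$ carries a pure Hodge structure of weight $n$, with $H^n_B(M,\C) = \bigoplus_{p+q=n} H^{p,q}_B(M)$ and $\overline{H^{p,q}_B(M)} = H^{q,p}_B(M)$. On the $\bigwedge W$ factor I would assign weights so that the generators in $W$ receive weight $2$ (reflecting that $d W \subset H^2_B(M)$ raises the relevant filtration by one step), and combine these to define an increasing weight filtration and a Hodge bigrading on $A^*$.

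The \emph{fundamental} hypothesis is exactly what makes the bigrading compatible with the differential $d$. Since each $w \in W$ satisfies $[dw]_B \in H^2_B(M)$ represented by a closed basic $(1,1)$-form, the image $dw$ lands in the pure $(1,1)$-part $H^{1,1}_B(M)$. This is the key point that pins down the bidegrees: it forces the differential on the generators of $\bigwedge W$ to be a morphism of bigraded objects of the expected type, so that the weight filtration is stable under $d$ and the induced filtration on cohomology is well behaved. I would then verify conditions (1) and (2) by a direct degree count. In low total degree the algebra $A^*$ is small enough that $H^1(A^*)$ and $H^2(A^*)$ decompose into the listed summands: $H^1$ picks up contributions from $H^1_B(M) = H^{1,0}_B \oplus H^{0,1}_B$ together with the kernel of $d\colon W \to H^2_B(M)$ (contributing the weight-two piece $H^1_{1,1}$), while $H^2$ assembles from $H^2_B(M)$, from $W \wedge W$, and from the products $H^1_B(M) \otimes W$, each landing in the indicated Hodge type $(p,q)$.

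The final assertion, that Sullivan's minimal model of the complex de Rham complex carries Morgan's bigrading, follows by feeding the bigraded model $A^*$ into Morgan's machinery \cite{Mor}. Because $A^*$ is a finite-dimensional bigraded DGA whose cohomology inherits a mixed Hodge structure and whose differential respects the bigrading, Morgan's theory produces a bigraded minimal model, and functoriality of the minimal model together with the quasi-isomorphism chain $A^* \simeq \Omega^*(M)$ transports this structure to the minimal model of $\Omega^*(M,\C)$. I expect the main obstacle to be checking that the weight filtration defined on $A^*$ is genuinely a filtration by sub-DGAs and that the induced filtration on $H^*(M)$ satisfies the axioms of a mixed Hodge structure, namely that the associated graded pieces are pure of the correct weight; this is where the fundamental condition must be used carefully, and where one must confirm that the two filtrations (weight and Hodge) are opposed in the sense required by Morgan's definition.
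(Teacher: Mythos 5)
Your proposal follows essentially the same route as the paper: the authors also place the weight filtration (by $\bigwedge W$-degree, with $W$ in weight $2$) and the Hodge filtration on the model $A^* = H^*_B(M)\otimes\bigwedge W$, use the fundamental condition exactly as you do --- to give $W$ the pure type $(1,1)$ structure so that $d\colon W\to H^2_B(M)$ is a morphism of Hodge structures --- and then verify Morgan's mixed-Hodge-diagram axioms for this Hirsch extension (their Example on Hirsch extensions of Hodge-structured algebras) before invoking Morgan's theorems to get the mixed Hodge structure on $H^*(M)$ and the bigraded minimal model. The low-degree decompositions (1) and (2) are computed in the paper by the same direct inspection of the Hirsch extension that you outline.
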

As a consequence of this result,
 we can say that  not every finitely generated group can be the fundamental  group of a compact complex  manifold admitting a   transverse K\"ahler structure on a fundamental  central foliation.

\section{Central foliations}
Let $M$ be a compact complex manifold.
In this section we define the \emph{canonical} foliation and \emph{central} foliations on $M$. 
Let $G_{M}$ be the identity component of the group of all biholomorphisms on $M$.
 $G_{M}$ is a complex Lie group (see \cite{BM}).  
 Denote by $\g_{M}$ the Lie algebra of $G_{M}$ and by $J$ the complex structure on $\g_M$. 
 Let $T$ be a maximal compact torus of $G_M$ and $\mathfrak{t}$ the Lie algebra of $T$. Put 
\begin{equation*}
	\h_{M}:= \mathfrak{t} \cap J \mathfrak{t}
\end{equation*}
and denote by $H_{M}$ the corresponding Lie subgroup of $G_{M}$. Then $H_{M}$ acts on $M$ local freely (see \cite[Proposition 3.3]{I}). 
\begin{lemma}\Label{lemmaonH}
	The following holds: 
\begin{enumerate}
	\item Elements in $\mathfrak{h}_{M}$ centralize $\g_{M}$. 
	\item $\mathfrak{h}_{M}$ does not depend on the choice of $T$. 
\end{enumerate}
\end{lemma}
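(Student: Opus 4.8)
The plan is to prove (1) by a Liouville-type argument and then deduce (2) by giving a description of $\mathfrak{h}_M$ that makes no reference to $T$. First I would record the two facts that drive everything: since $G_M$ is a complex Lie group, $\exp\colon \mathfrak{g}_M\to G_M$ and $\operatorname{Ad}\colon G_M\to GL(\mathfrak{g}_M)$ are holomorphic, and $J$ is multiplication by $\sqrt{-1}$ on $\mathfrak{g}_M$. For (1) fix $v\in\mathfrak{h}_M=\mathfrak{t}\cap J\mathfrak{t}$. Writing $z=s+\sqrt{-1}\,t$ we have $zv=sv+t\,Jv$, and since $v,Jv\in\mathfrak{t}$ with $\mathfrak{t}$ abelian, $\exp(zv)=\exp(sv)\exp(t\,Jv)\in T$ for every $z\in\C$. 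Thus the entire curve $z\mapsto\exp(zv)$ lands in the compact torus $T$. For a fixed $x\in\mathfrak{g}_M$ the map $f(z):=\operatorname{Ad}(\exp(zv))x$ is therefore an entire $\mathfrak{g}_M$-valued function whose image lies in the compact set $\{\operatorname{Ad}(g)x: g\in T\}$; being bounded, it is constant by Liouville's theorem, so $f\equiv f(0)=x$. Since $\operatorname{Ad}(\exp(zv))=e^{z\operatorname{ad}(v)}$ gives $\tfrac{d}{dz}\big|_{0}f(z)=[v,x]$, I conclude $[v,x]=0$ for all $x\in\mathfrak{g}_M$, which is (1).

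For (2), let $\mathfrak{z}$ be the center of $\mathfrak{g}_M$, a $J$-invariant (complex) subspace. I would show that $\mathfrak{h}_M$ is the largest complex subspace $\mathfrak{a}\subseteq\mathfrak{z}$ for which $\exp(\mathfrak{a})$ has compact closure in $G_M$; as this description involves only $G_M$, $J$ and the center, it is manifestly independent of $T$. One inclusion is immediate: $\mathfrak{h}_M$ is $J$-invariant, lies in $\mathfrak{z}$ by (1), and $\exp(\mathfrak{h}_M)\subseteq T$ is relatively compact, so $\mathfrak{h}_M$ is a competitor. For the reverse, let $\mathfrak{a}\subseteq\mathfrak{z}$ be $J$-invariant with $A:=\overline{\exp(\mathfrak{a})}$ compact. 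As the closure of a connected abelian subgroup, $A$ is a compact torus, and it is central. Then $T\cdot A$ is the image of $T\times A$ under the multiplication homomorphism (well defined because $A$ is central), hence a compact connected abelian subgroup containing $T$, i.e.\ a compact torus; maximality of $T$ forces $A\subseteq T$. Consequently $\mathfrak{a}\subseteq\operatorname{Lie}(A)\subseteq\mathfrak{t}$, and $J$-invariance yields $\mathfrak{a}\subseteq\mathfrak{t}\cap J\mathfrak{t}=\mathfrak{h}_M$. This proves the maximality description, and (2) follows.

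The step I expect to be the crux is the passage from membership in the compact torus to a statement about the full complexified one-parameter subgroups: in (1), upgrading ``$v,Jv\in\mathfrak{t}$'' to ``$\exp(\C v)\subseteq T$'' is what makes $\operatorname{Ad}(\exp(zv))x$ bounded and hence unlocks Liouville, and in (2) the analogous enlargement $T\cdot A=T$ is what pins $\mathfrak{a}$ inside $\mathfrak{t}$. The remaining points---holomorphy of $\operatorname{Ad}$ and the fact that a product of two commuting compact tori is again a compact torus---are standard and should be dispatched quickly once relative compactness is in hand.
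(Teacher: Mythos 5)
Your proof is correct, but it reaches the conclusion by a genuinely different route, most visibly in part (2). For part (1) the difference is mainly packaging: the paper averages over $T$ to make $\g_M$ a unitary $T$-representation and then invokes the fact that a representation of the complex group $H_M$ which is simultaneously holomorphic and unitary must be trivial; that fact is itself a Liouville statement, and your argument --- $\exp(\C v)\subset T$ makes $z\mapsto \operatorname{Ad}(\exp(zv))x$ a bounded entire function, hence constant --- is the same mechanism with the representation-theoretic wrapping removed. For part (2) the two proofs diverge: the paper cites the conjugacy theorem for maximal compact tori (Hochschild) to produce $g\in G_M$ with $gTg^{-1}=T'$, observes that $\operatorname{Ad}_g$ is $\C$-linear so that $\operatorname{Ad}_g(\h_M)=\ft'\cap J\ft'$, and then uses (1) together with connectedness of $G_M$ to see that $\operatorname{Ad}_g$ is the identity on the central subspace $\h_M$, forcing the two spaces to coincide. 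You instead exhibit a $T$-free characterization of $\h_M$ as the largest $J$-invariant subspace $\mathfrak{a}$ of the center with $\overline{\exp(\mathfrak{a})}$ compact, proved by the elementary observation that for a central compact torus $A$ the product $T\cdot A$ is again a compact torus, so maximality of $T$ pins $A$ inside $T$. Your route avoids the conjugacy theorem entirely and yields as a by-product an intrinsic description of $\h_M$ (hence of the canonical foliation) that never mentions a choice of maximal torus, which is slightly stronger than statement (2); the paper's route is shorter, getting the equality from a single inner automorphism. Two small points to make explicit in your write-up: when you run the characterization against a second maximal torus $T'$, note that part (1) holds verbatim for $T'$ (your Liouville argument uses only compactness of the torus), so $\ft'\cap J\ft'$ does lie in the center and is a competitor; and justify that $A=\overline{\exp(\mathfrak{a})}$ is central by remarking that the centralizer of $G_M$ is closed. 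Neither is a gap, just bookkeeping.
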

\begin{proof}
	Since $T$ is compact, $\g_{M}$ is a unitary representation of $T$. In particular, $\g_{M}$ is a unitary representation of $H_{M}$. However, $H_{M}$ is a holomorphic subgroup of $G_{M}$ and hence $\g_{M}$ is a holomorphic representation of $H_{M}$. Therefore $\g_{M}$ is a trivial representation of $H_M$, showing Part (1). 
	
	Let $T'$ be another maximal compact torus of $G_{M}$. Then, there exists $g \in G_{M}$ such that $gTg^{-1} = T'$ (see \cite[Chapter XV, Section 3]{Hochschild} for detail). Put 
	\begin{equation*}
		\mathfrak{h}' := \mathfrak{t}' \cap J\mathfrak{t}'. 
	\end{equation*}
	Then, it follows from $gTg^{-1} = T'$ that $\operatorname{Ad}_g (\mathfrak{h}_{M}) =  \mathfrak{h}'$. On the other hand, by (1), we have \ishida{that 追加} that $\operatorname{Ad}_g $ is the identity on $\mathfrak{h}_{M}$. Therefore $\mathfrak{h}_{M}$ does not depend on the choice of $T$, proving (2).
\end{proof}
	We remark that any $\C$-subspace $\mathfrak{h}$ of $\mathfrak{h}_M$ defines a holomorphic foliation $\mathcal{F}_H$ on $M$. We call $\mathcal{F}_H$ a \emph{central foliation} on $M$ and $\mathcal{F}_{H_M}$ the \emph{canonical foliation} on $M$. 
	 It follows from Lemma \ref{lemmaonH} that the canonical foliation does not depend on the choice of $T$, that is, the canonical foliation is intrinsic \ishida{to compact complex manifolds 追加} to compact complex manifolds. 

\section{Hirsch extensions and minimal models}
In this section, DGAs are defined over ${\mathbb K}=\Q, \R$ or $\C$, if we do not specify.
Let $(A^{*},d_{A})$ be a DGA.
\ishida{old: Let $V$ be a vector space of degree $k$.} Let $k$ be an integer. 
For a linear map $\beta:V \to A^{k+1}$ with $d_{A}\circ \beta=0$,
we define a Hirsch extension $(B,d_{B})$ of $A^{*}$ in degree $k$ such that $B^{*}=A^{*}\otimes \bigwedge V$ with $\deg(v)=k$ for any $v\in V$, 
$d_{B}=d_{A}$ on $A^{*}$ and $d_{B}=\beta $ on $V$.
Defining the filtration on $B^{*}$ by $F^{p}(B^{*})=A^{*\ge p}\otimes  \bigwedge V$,
we have the spectral sequence $E^{*,*}$ with
$E^{p,q}_{2}=H^{p}(A)\otimes \bigwedge^{q}V$.
Consider the composition $q\circ \beta :V\to H^{k+1}(A^{*})$ where $q:\ker d_{A}\to H^{*}(A^{*})$ is the quotient \ishida{map 追加} map. 
The DGA structure of $B^{*}$ is determined by the map   $q\circ \beta$ (independent of a choice of $\beta$) (\cite[10.2]{GMo}).

\begin{lemma}\label{hir}
Let $(A^{*}_{1},d_{A_{1}})$ and $(A^{*}_{2},d_{A_{2}})$ be DGAs and $f:A_{1}^{*}\to A_{2}^*$ a quasi-isomorphism.
Then for a Hirsch extension $A^{*}_{1}\otimes V$ (resp.\ $A^{*}_{2}\otimes V$),   we have a Hirsch extension $A^{*}_{2}\otimes V$ (resp.\ $A^{*}_{1}\otimes V$) and quasi-isomorphism
\[A^{*}_{1}\otimes V\to A^{*}_{2}\otimes V.
\]

\end{lemma}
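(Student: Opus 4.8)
The plan is to extend the given quasi-isomorphism $f$ by the identity on the new generators and to detect that the extension is again a quasi-isomorphism by comparing the two filtered spectral sequences described just before the lemma.

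First I would treat the forward case, in which the Hirsch extension $A_1^{*}\otimes V$ is given by a linear map $\beta_1\colon V\to A_1^{k+1}$ with $d_{A_1}\circ\beta_1=0$. I set $\beta_2:=f\circ\beta_1\colon V\to A_2^{k+1}$. Since $f$ is a morphism of DGAs, $d_{A_2}\circ\beta_2=f\circ d_{A_1}\circ\beta_1=0$, so $\beta_2$ defines a Hirsch extension $A_2^{*}\otimes V$ in degree $k$. I then define $\tilde f:=f\otimes\operatorname{id}_{\bigwedge V}\colon A_1^{*}\otimes V\to A_2^{*}\otimes V$, that is $\tilde f|_{A_1^{*}}=f$ and $\tilde f(v)=v$ for $v\in V$. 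This is multiplicative by construction, and it commutes with the differentials: on $A_1^{*}$ this is the hypothesis $f\circ d_{A_1}=d_{A_2}\circ f$, while on $V$ one has $\tilde f(d_{B_1}v)=f(\beta_1 v)=\beta_2 v=d_{B_2}(\tilde f v)$. Hence $\tilde f$ is a morphism of DGAs.

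Next I would check that $\tilde f$ is a quasi-isomorphism. Both $A_i^{*}\otimes V$ carry the filtration $F^{p}=A_i^{*\ge p}\otimes\bigwedge V$, and $\tilde f$ preserves it because $f$ preserves the grading of $A$. Thus $\tilde f$ induces a morphism between the two spectral sequences, and on the $E_2$-page the induced map is $H^{p}(f)\otimes\operatorname{id}\colon H^{p}(A_1)\otimes\bigwedge^{q}V\to H^{p}(A_2)\otimes\bigwedge^{q}V$, which is an isomorphism since $f$ is a quasi-isomorphism. Because $\deg v=k\ge 1$ and the $A_i^{*}$ are non-negatively graded, in each fixed total degree $n$ the filtration $F^{p}\cap B^{n}$ is finite, being all of $B^{n}$ for $p\le 0$ and zero for $p>n$; hence both spectral sequences converge and the induced filtration on cohomology is finite. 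By the comparison theorem for spectral sequences, an isomorphism on the $E_2$-page forces an isomorphism on $E_\infty$ and therefore on $H^{*}(A_i^{*}\otimes V)$, so $\tilde f$ is a quasi-isomorphism.

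Finally I would deal with the reverse case, where one starts from a Hirsch extension $A_2^{*}\otimes V$ given by $\beta_2\colon V\to A_2^{k+1}$. Using that $H^{k+1}(f)$ is an isomorphism, for each basis vector $v$ I choose a closed element $\beta_1(v)\in A_1^{k+1}$ representing $\bigl(H^{k+1}(f)\bigr)^{-1}[\beta_2 v]$ and extend linearly; this defines a Hirsch extension $A_1^{*}\otimes V$. By construction $f\circ\beta_1$ and $\beta_2$ induce the same map $V\to H^{k+1}(A_2)$, so by the cited fact that the DGA structure of a Hirsch extension depends only on $q\circ\beta$, the extension built from $\beta_2$ is isomorphic to the one built from $f\circ\beta_1$; composing this isomorphism with the forward quasi-isomorphism attached to $\beta_1$ yields the desired map $A_1^{*}\otimes V\to A_2^{*}\otimes V$. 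I expect the main obstacle to be exactly this independence-of-$\beta$ step together with the convergence of the spectral sequences: once the filtration is seen to be finite in each total degree the comparison theorem applies cleanly, but one must take care that the chosen $\beta_1$ really reproduces $\beta_2$ up to coboundary, so that the two Hirsch extensions agree as DGAs.
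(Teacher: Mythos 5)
Your proposal is correct and follows essentially the same route as the paper: in the forward direction you take $\beta_2 = f\circ\beta_1$ and show $f\otimes\operatorname{id}$ induces an isomorphism on the $E_2$-page of the filtration spectral sequence, and in the reverse direction you lift $\beta_2$ to a closed $\beta_1$ with $q\circ f\circ\beta_1 = q\circ\beta_2$ and invoke the independence of the Hirsch extension on the choice of $\beta$ within its cohomology class. The only difference is that you spell out the convergence and comparison details that the paper leaves implicit.
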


\begin{proof}
In case $A^{*}_{1}\otimes V$ ($\beta_{1}:V\to A^{k+1}_{1} $) is given.
Consider the Hirsch extension $A^{*}_{2}\otimes V$ given  by $\beta_{2}=f\circ \beta_{1}:V\to A^{*}_{2}$ and the homomorphism $f\otimes {\rm id}: A^{*}_{1}\otimes V\to A^{*}_{2}\otimes V$.
Then we can easily show that $f\otimes {\rm id}$ induces an isomorphism on the $E_{2}$-term of the spectral sequence.
Hence $f\otimes {\rm id}$ is a quasi-isomorphism.

In case $A^{*}_{2}\otimes V$ ($\beta_{2}:V\to A^{k+1}_{2} $) is given.
Since $f$ is a quasi-isomorphism, 
we can take a linear map $\beta_{1}:V\to A^{*}_{1}$ so that $d\circ \beta_{1}=0$ and
$q\circ f\circ\beta_{1} =q\circ \beta_{2}$.
\ishida{old: By the above argument, } By the same argument as above, the Hirsch extension of $A^{*}_{2}$ given  by $\beta_{2}:V\to A^{k+1}_{1} $ is identified  with the one given by $ f\circ\beta_{1}: V\to A^{k+1}_{1} $.
Under this identification, we have the homomorphism
$f\otimes {\rm id}: A^{*}_{1}\otimes V\to A^{*}_{2}\otimes V$
and as in the first case we can show that this homomorphism is a quasi-isomorphism.

\end{proof}

\begin{defi}
A DGA $\mathcal M^{\ast}$  is minimal if:
\begin{itemize}
\item  $\mathcal M^{0}=\mathbb K$. 
 \item
$\mathcal M^{\ast}=\bigcup\mathcal M^{\ast}_{i}$ for a sequence of sub-DGAs
\[\mathbb K=\mathcal M^{\ast}_{0}\subset \mathcal M^{\ast}_{1}\subset \dots
\]
such that  $\mathcal M_{i+1}^{\ast}$ is a Hirsch extension of  $\mathcal M_{i}^{\ast}$. 

\item $d{\mathcal  M}^{*}\subset {\mathcal  M}^{+}\cdot {\mathcal  M}^{+}$
where $M^{+}=\bigoplus_{j>0}{\mathcal M}^{j}$.
\end{itemize}

We say that a DGA $\mathcal M^{\ast}$  is $k$-minimal if $\mathcal M^{\ast}$ is minimal and $\bigoplus_{j>k}{\mathcal M}^{j} \subset  {\mathcal  M}^{+}\cdot {\mathcal  M}^{+}$. Equivalently each extension in  a sequence for $\mathcal M^{\ast}$   has degree at most $k$.
\end{defi}

\begin{defi}
Let $A^{\ast}$ be a DGA with $H^{0}(A^{\ast})=\mathbb K$.
\begin{itemize}
\item
A minimal DGA $\mathcal M^{*}$ is \ishida{old:the} a {\em minimal model}  of $A^{\ast}$ if there is a quasi-isomorphism  $\mathcal M\to A^{\ast}$.
\item A $k$-quasi-isomorphism $\mathcal{M}^* \to A^*$ is a homomorphism of DGAs that induces an  isomorphism $H^{j}(\mathcal M^{*})\cong H^{j}(A^{*})$ for $j\le k$ and an injection $H^{k+1}(\mathcal M^{*})\hookrightarrow H^{k+1}(A^{*})$. A $k$-minimal DGA $\mathcal M^{*}$ is the $k$-{\em minimal model}  of $A^{\ast}$ if there is a $k$-quasi-isomorphism
  $\mathcal M^{*}\to A^{\ast}$.

\end{itemize}

\end{defi}

\begin{thm}[\cite{Sul}]
For a DGA $A^*$ with $H^0(A^*) = \mathbb K$, \ishida{old:the} a minimal model and \ishida{old:the} a $k$-minimal model exist and each of them is unique up to DGA isomorphism. 

\end{thm}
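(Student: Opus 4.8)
The plan is to prove existence by an inductive construction using Hirsch extensions, and uniqueness by a lifting argument that exploits minimality; I treat the minimal model, the $k$-minimal model being obtained by truncating the construction at degree $k$. For existence I would build an increasing sequence $\mathbb{K}=\mathcal{M}^*_0\subset \mathcal{M}^*_1\subset\cdots$ of sub-DGAs together with homomorphisms $\rho_n\colon \mathcal{M}^*_n\to A^*$ such that $\rho_n$ induces an isomorphism $H^j(\mathcal{M}_n)\cong H^j(A)$ for $j\le n$ and an injection for $j=n+1$. One starts with $\rho_0$ the unit $\mathbb{K}\to A^*$, which is legitimate because $H^0(A)=\mathbb{K}$. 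Given $\rho_n$, I pass to $\mathcal{M}_{n+1}$ in two sub-steps, both adjoining generators in degree $n+1$. First, adjoin a space $V'$ with zero differential, and define $\rho_{n+1}$ on $V'$ by sending a basis to cocycles of $A$ representing a basis of $\operatorname{coker}\bigl(H^{n+1}(\mathcal{M}_n)\to H^{n+1}(A)\bigr)$; this makes the cohomology map an isomorphism in degree $n+1$. Second, adjoin a space $V''$ whose differential $\beta$ sends a basis to cocycles representing a basis of $\ker\bigl(H^{n+2}(\mathcal{M}_n\otimes\bigwedge V')\to H^{n+2}(A)\bigr)$; since $\rho(\beta(v))$ is exact in $A$, one chooses $a_v\in A^{n+1}$ with $da_v=\rho(\beta(v))$ and sets $\rho_{n+1}(v)=a_v$, which keeps $\rho_{n+1}$ a chain map and restores injectivity in degree $n+2$. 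Setting $\mathcal{M}^*=\bigcup_n\mathcal{M}^*_n$ and $\rho=\varinjlim\rho_n$ yields the quasi-isomorphism. Minimality is automatic here: generators are introduced in increasing degree, so the differential of a new generator of degree $n+1$ lands in degree $n+2$ inside an algebra generated in degrees $\le n+1$, and is therefore a sum of products of at least two positive-degree generators, hence decomposable.

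For uniqueness, suppose $\mathcal{M}\xrightarrow{\phi}A^*\xleftarrow{\psi}\mathcal{N}$ are quasi-isomorphisms from minimal DGAs. The key is the lifting lemma: since $\psi$ is a quasi-isomorphism and $\mathcal{M}$ is minimal, $\phi$ lifts through $\psi$ up to homotopy, i.e.\ there is a DGA homomorphism $F\colon\mathcal{M}\to\mathcal{N}$ with $\psi\circ F$ homotopic to $\phi$. This lift is built inductively over the Hirsch extensions defining $\mathcal{M}$ (in the spirit of Lemma \ref{hir}), the obstruction to extending over each new generator being a class that vanishes because $\psi$ is surjective on cohomology and injective one degree higher. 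The resulting $F$ induces an isomorphism on cohomology, and since both $\mathcal{M}$ and $\mathcal{N}$ are minimal with $\mathcal{M}^0=\mathcal{N}^0=\mathbb{K}$, minimality forces $F$ to be an isomorphism on the spaces of indecomposables in each degree (the differentials being decomposable, they contribute nothing to the linear part, so the cohomology isomorphism descends to an isomorphism of indecomposables). A filtered homomorphism of free graded-commutative algebras that is bijective on generators is bijective, so $F$ is a DGA isomorphism; the same argument applied to $k$-minimal models and $k$-quasi-isomorphisms gives their uniqueness.

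The main obstacle is the uniqueness half, specifically making the lifting and homotopy arguments precise. One must set up the notion of homotopy of DGA maps, typically via a cylinder object $\mathcal{N}\otimes(\mathbb{K}[t]\oplus\mathbb{K}[t]\,dt)$, and verify that homotopic maps agree on cohomology so that the inductive obstructions are well-defined cohomology classes. Showing these obstructions vanish uses exactly the quasi-isomorphism hypothesis on $\psi$ together with the Hirsch-extension structure of a minimal DGA. The concluding step, upgrading the cohomology isomorphism $F$ to an algebra isomorphism, relies crucially on the condition $d\mathcal{M}^*\subset\mathcal{M}^+\cdot\mathcal{M}^+$: it guarantees that the differential introduces no new indecomposables, so that an isomorphism on indecomposables propagates to a bijection on the whole algebra.
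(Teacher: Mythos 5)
The paper offers no proof of this statement at all: it is quoted from Sullivan \cite{Sul} (see also \cite{GMo}), so your proposal has to be judged on its own merits rather than against an argument in the text. Your uniqueness half is a reasonable sketch of the standard lifting-up-to-homotopy argument. The existence half, however, has a genuine gap: your inductive step claims that adjoining a single space $V''$ of degree-$(n+1)$ generators, with differentials hitting representatives of $\ker\bigl(H^{n+2}(\mathcal{M}_n\otimes\bigwedge V')\to H^{n+2}(A)\bigr)$, ``restores injectivity in degree $n+2$''. This is true only in the simply connected case ($H^1(A^*)=0$), where no degree-$(n+2)$ element of the extended algebra involves the new generators. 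When there are degree-one generators — which is precisely the situation this paper needs, since it works with $1$-minimal models, fundamental groups and nilmanifolds — the new generators create new cocycles in degree $n+2$ (products of $V''$ with degree-one elements), and these can again die in $H^{n+2}(A)$, so injectivity fails after your single step.

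Concretely, let $A^*=\Omega^*(\Gamma\backslash N)$ for the $3$-step filiform nilmanifold, whose Chevalley--Eilenberg algebra is $\bigwedge(x,y,z,w)$ with all generators of degree $1$, $dx=dy=0$, $dz=xy$, $dw=xz$. Your degree-one step produces $V'=\langle x,y\rangle$ and $V''=\langle z\rangle$ with $dz=xy$. But in $\bigwedge(x,y,z)$ the element $xz$ is a new cocycle whose class maps to $[dw]=0$ in $H^2(A^*)$, so $H^2(\mathcal{M}_1)\to H^2(A^*)$ is not injective. Your induction then moves on to degree $2$ and can never repair this: generators of degree $\ge 2$ contribute neither coboundaries nor relations in degree $2$, so the class $[xz]$ survives into the limit and $\rho$ is not a quasi-isomorphism. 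The correct construction (Sullivan's, also in \cite{GMo}) iterates the kernel-killing step countably many times \emph{within} each degree: set $\mathcal{M}_{n+1}^{(0)}=\mathcal{M}_n\otimes\bigwedge V'$ and inductively adjoin $V''_j$ killing the kernel of $H^{n+2}(\mathcal{M}_{n+1}^{(j)})\to H^{n+2}(A)$, then take $\mathcal{M}_{n+1}=\bigcup_j \mathcal{M}_{n+1}^{(j)}$; since cohomology commutes with this directed union, injectivity holds in the limit. Note that this is exactly why the paper's definition of a minimal DGA asks for a (possibly infinite) sequence of Hirsch extensions rather than one extension per degree, and for the $1$-minimal model this infinite tower is what encodes the lower central series of $\pi_1$. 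The same bookkeeping is also needed to make your uniqueness step rigorous in the non-simply-connected case, where the induction must run over the full tower of extensions rather than over degrees.
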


The minimal models give the following ``de Rham homotopy theorem".
We shall state it but omit the details. 
See \cite{Sul},\cite{DGMS}, \cite{GMo}, \cite{Mor} for the details.

\begin{thm}\label{dHom}
Let $M$ be a compact smooth manifold.
Consider the de Rham complex $\Omega^{*}(M)$ as a DGA.
Then \begin{itemize}
\item The $1$-minimal model of $\Omega^{*}(M)$ is the dual to the Lie algebra of the nilpotent completion of $\pi_1(M)$. 

\item If $M$ is simply connected, then the minimal model of $\Omega^{*}(M)$ determines the real homotopy type of $M$.

\end{itemize}
\end{thm}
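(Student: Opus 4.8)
The plan is only to indicate the strategy of this classical theorem of Sullivan, since the details occupy the references \cite{Sul}, \cite{GMo}. The first part rests on an anti-equivalence between $1$-minimal DGAs and pro-nilpotent Lie algebras. For a $1$-minimal DGA $\mathcal M^{*}$ minimality forces $d\mathcal M^{1}\subset\bigwedge^{2}\mathcal M^{1}$; setting $L:=\operatorname{Hom}(\mathcal M^{1},\R)$ and dualizing $d$ gives an antisymmetric bracket $\bigwedge^{2}L\to L$, with $d^{2}=0$ equivalent to the Jacobi identity and the Hirsch tower inducing a filtration that makes $L$ pro-nilpotent. Conversely $\mathcal M^{*}$ is the continuous Chevalley--Eilenberg complex of $L$, so it suffices to identify the $L$ attached to the $1$-minimal model of $\Omega^{*}(M)$ with the Malcev Lie algebra $\mathfrak p$ of $\pi_{1}(M)$, i.e.\ the Lie algebra of the $\R$-nilpotent completion.

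To identify $L$ with $\mathfrak p$ I would proceed along the tower $\R=\mathcal M_{0}\subset\mathcal M_{1}\subset\cdots$. The first stage yields $\mathcal M_{1}^{1}=H^{1}(M)$ with zero differential, dual to $\pi_{1}(M)^{\mathrm{ab}}\otimes\R=\mathfrak p/\Gamma^{2}\mathfrak p$ for the lower central series $\Gamma^{\bullet}$. The class $q\circ\beta$ that governs each successive extension (recalled above) encodes the cup product $H^{1}(M)\otimes H^{1}(M)\to H^{2}(M)$ and the higher Massey products, which are precisely the data of the quotients $\Gamma^{i}\mathfrak p/\Gamma^{i+1}\mathfrak p$. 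The efficient packaging is Chen's theory of iterated integrals: the zeroth cohomology of the reduced bar complex $B(\Omega^{*}(M))$ is the Hopf algebra of $\R$-valued functions on $\pi_{1}(M)$ factoring through a nilpotent quotient, whose dual is the enveloping algebra of $\mathfrak p$, while the $1$-quasi-isomorphism $\mathcal M(1)\to\Omega^{*}(M)$ shows the same $H^{0}$ is computed by $\mathcal M(1)$. Matching the two lower-central filtrations term by term gives $L\cong\mathfrak p$.

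For the second part, a simply connected $M$ is in particular a nilpotent space, so Sullivan's main equivalence applies. The spatial realization $\langle\mathcal M^{*}\rangle$ of the minimal model is a real homotopy type; its real cohomology is $H^{*}(\mathcal M^{*})\cong H^{*}(M,\R)$, the indecomposables $(\mathcal M^{k}/\mathcal M^{+}\cdot\mathcal M^{+})^{*}$ recover $\pi_{k}(M)\otimes\R$, and the quadratic part of the differential encodes the Whitehead products. The adjunction between the $\R$-polynomial de Rham functor and spatial realization then produces a map $M\to\langle\mathcal M^{*}\rangle$ inducing isomorphisms on real cohomology and real homotopy, so $\mathcal M^{*}$ determines the real homotopy type of $M$.

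The main obstacle is the identification in the second paragraph: showing that the abstract pro-nilpotent Lie algebra produced by Chevalley--Eilenberg duality is \emph{canonically} the Malcev Lie algebra of $\pi_{1}(M)$, and not merely some pro-nilpotent Lie algebra with the correct abelianization. This is the point at which topology enters an otherwise purely algebraic construction, and it requires Chen's $\pi_{1}$-de Rham theorem together with careful control of the convergence and Hopf-algebra structure of iterated integrals.
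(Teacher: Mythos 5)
The paper offers no proof of this statement at all: it is stated as a classical result, with the remark ``we shall state it but omit the details'' and pointers to \cite{Sul}, \cite{DGMS}, \cite{GMo}, \cite{Mor}. So the comparison is really with those references, and against them your outline is correct but takes one genuinely different turn. The first paragraph (anti-equivalence between $1$-minimal DGAs and pro-nilpotent Lie algebras via Chevalley--Eilenberg duality, with $d^2=0$ dual to Jacobi and the Hirsch tower giving pro-nilpotency) is exactly the standard setup of Sullivan and Griffiths--Morgan. Where you diverge is the identification $L\cong\mathfrak p$: you route it through Chen's iterated integrals and the $H^0$ of the bar complex, whereas Sullivan's and Griffiths--Morgan's argument is an inductive comparison of towers, matching the $i$-th stage of the Hirsch tower of the $1$-minimal model with the dual of $\pi_1(M)/\Gamma^{i+1}\otimes\R$ stage by stage, using that a degree-$1$ Hirsch extension corresponds dually to a central extension of nilpotent Lie algebras and that the extension data $q\circ\beta$ is controlled by $H^1$ and $H^2$ of the relevant nilpotent quotient (no bar construction or convergence issues needed). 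Chen's theorem buys a cleaner, more conceptual packaging of ``all Massey products at once'' and a canonical Hopf-algebra-level identification, at the price of the analytic input you correctly flag as the main obstacle; the inductive tower argument is more elementary and is the one implicitly invoked by the paper's citations. Your second paragraph (spatial realization, indecomposables giving $\pi_k\otimes\R$, adjunction producing $M\to\langle\mathcal M^*\rangle$) is the standard proof of the simply connected statement and matches \cite{Sul}, \cite{GMo}; note only that one should invoke finite-type hypotheses (guaranteed here by compactness of $M$) for the realization and the homotopy-group identification to behave as claimed. As a proof sketch at the level of detail the paper itself adopts, your proposal is sound.
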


\section{Models for transverse K\"ahler torus actions}
In this section, we give a model and Dolbeault-model of a complex manifold equipped with a central foliation. 
\subsection{Models for compact Lie group actions}
The following result is well-known (see \cite{Fe} for example).
\begin{prop}\Label{cptderham}
Let $M$ be a compact manifold and $K$ a compact connected
Lie group.
Assume that $K$ acts on $M$.
Then the inclusion
\[\Omega^{*}(M)^{K}\subset \Omega(M)
\]
induces a cohomology isomorphism.
\end{prop}

Let $M$ be a complex manifold.
Let $(\Omega^{*,*}(M),\bar\partial)$ be the Dolbeault complex of $M$.
Suppose that a group $K$ acts on $M$ as biholomorphisms.
Then the space $\Omega^{*,*}(M)^{K}$ of $K$-invariant differential forms  is a subcomplex of $\Omega^{*,*}(M)$.

\begin{prop}\Label{prop:torusdol}
Let $M$ be a compact complex manifold and $K$ a connected compact  Lie group.
Assume that $K$ acts on $M$ as biholomorphisms and the induced action on the Dolbeault cohomology is trivial.
Then the inclusion
\[\Omega^{*,*}(M)^{K}\subset \Omega^{*,*}(M)
\]
induces an isomorphism on Dolbeault cohomology.
\end{prop}
\begin{proof}
Let $d\mu$ be the normalized Haar measure of $K$. 
Define the linear map 
\[I:\Omega^{*,*}(M)\ni \omega\mapsto \int_{g\in K}g^{*}\omega d\mu\in  \Omega^{*,*}(M)^{K}.
\]
Then $I$ commutes with Dolbeault operator $\bar \partial$, that is, $I$ induces a bi-graded module homomorphism $H^{*,*}(M) \to H(\Omega^{*,*}(M)^K)$.
Since $I$ is the identity on $\Omega^{*,*}(M)^K$, the composition 
\[
	H(\Omega^{*,*}(M)^K) \to H^{*,*}(M) \to H(\Omega^{*,*}(M)^K)
\]
of the homomorphisms induced by the inclusion and $I$ is the identity. Therefore the homomorphism $H(\Omega^{*,*}(M)^K) \to H^{*,*}(M)$ induced by the inclusion $\Omega^{*,*}(M)^{K}\subset \Omega^{*,*}(M)$ is injective.

Since  the induced action on the Dolbeault cohomology is trivial,
for a $\bar \partial$-closed form $\omega\in \Omega^{*,*}(M)$ and any $g\in G$, there exists $\theta_{g}\in  \Omega^{*,*}(M)$ such that 
\[\omega-g^*\omega=\bar \partial\theta_{g} .
\]
By using Green operator, we can take $\theta_{g} $ smoothly  on $G$.
Integrating by $d\mu$, we have
\[\omega-I\omega=\bar\partial \int_{g\in K}\theta_{g}d\mu.
\]
Hence the inclusion
\[\Omega^{*,*}(M)^{K}\subset \Omega^{*,*}(M)
\]
induces a surjection on Dolbeault cohomology. 
\end{proof}

\begin{cor}\Label{cor:apdol}
Let $M$ be a compact complex manifold and $K$ a connected compact Lie group acting on $M$ as biholomorphisms. 
Let $H$ be a dense Lie subgroup of $K$ such that $H$ is a complex Lie group and
the restricted action of $K$ to $H$ on $M$ is holomorphic. Then, the inclusion $\Omega^{*,*}(M)^K \subset \Omega (M)$ induces an isomorphism on Dolbeault cohomology. 
\end{cor}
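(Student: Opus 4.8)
The plan is to reduce Corollary \ref{cor:apdol} to Proposition \ref{prop:torusdol} by verifying its sole nontrivial hypothesis, namely that $K$ acts trivially on the Dolbeault cohomology $H^{*,*}(M)$. Since $K$ is connected, its action on any cohomology is automatically trivial whenever that action is defined over $\R$ or factors through a discrete group; the subtlety here is that $K$ acts only smoothly (not necessarily holomorphically), so a priori its pullback need not preserve the bigrading, and one must check that the induced maps on $H^{p,q}(M)$ are nonetheless the identity. The key point I would exploit is the density of the complex Lie subgroup $H \subset K$ together with the holomorphicity of the $H$-action.

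First I would argue that $H$ acts trivially on $H^{*,*}(M)$. Because $H$ is connected and acts holomorphically, each $g \in H$ induces a bigraded automorphism of $H^{*,*}(M)$; since $H$ is connected this gives a continuous (indeed smooth) map from $H$ into $\mathrm{GL}$ of each finite-dimensional piece $H^{p,q}(M)$, and the connected group $H$ lies in the identity component. The standard deformation argument — biholomorphisms homotopic to the identity act as the identity on Dolbeault cohomology for a compact complex manifold, via the homotopy invariance of the induced map on cohomology of the pullback — then shows that the image of $H$ is trivial, so every $h \in H$ acts as the identity on $H^{*,*}(M)$. Here I would invoke that $G_M$, hence its identity component containing $H$, acts trivially on Dolbeault cohomology; this is the analogue for the holomorphic automorphism group of the classical fact for diffeomorphisms isotopic to the identity.

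Next I would upgrade triviality of the $H$-action to triviality of the $K$-action by continuity and density. The assignment $g \mapsto g^{*}$ defines a continuous homomorphism from $K$ into $\mathrm{GL}(H^{*}(M,\C))$ (continuity following from smoothness of the $K$-action and the de Rham description of cohomology). On the dense subgroup $H$ this homomorphism is trivial by the previous step; since $\mathrm{GL}(H^{*}(M,\C))$ is Hausdorff and the map is continuous, the preimage of the identity is closed and contains $H$, hence contains its closure $\overline{H} = K$. Thus $K$ acts trivially on $H^{*}(M,\C)$, and restricting to the $(p,q)$-components (which are preserved because the limiting elements of $K$ are limits of holomorphic ones, or equivalently because the Hodge-type decomposition is detected at the level of de Rham cohomology through $H$) we conclude $K$ acts trivially on $H^{*,*}(M)$.

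With the hypothesis of Proposition \ref{prop:torusdol} verified, that proposition immediately yields that $\Omega^{*,*}(M)^{K} \subset \Omega^{*,*}(M)$ induces an isomorphism on Dolbeault cohomology, which is the assertion of the corollary. The main obstacle I anticipate is the interplay between the merely smooth $K$-action and the holomorphic structure: one must be careful that taking closures in $K$ does not destroy the compatibility with the bigrading needed to apply the proposition, and the clean way around this is precisely the continuity-and-density argument above, which transports triviality from the holomorphically acting dense subgroup $H$ to all of $K$ at the level of cohomology, where no further compatibility is required since Proposition \ref{prop:torusdol} only needs the induced action on Dolbeault cohomology to be trivial.
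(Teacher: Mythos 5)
Your reduction to Proposition \ref{prop:torusdol} and your final continuity-and-density step (a continuous homomorphism into a Hausdorff group that is trivial on a dense subgroup $H$ is trivial on $K=\overline{H}$) are both sound, and that second step matches the paper. The genuine gap is in the step where you establish triviality of the $H$-action on $H^{*,*}(M)$: the ``standard deformation argument'' you invoke --- that biholomorphisms homotopic to the identity act as the identity on Dolbeault cohomology of a compact complex manifold --- is false. Dolbeault cohomology is not a homotopy invariant, and the identity component $G_M$ of the automorphism group of a compact complex manifold can act non-trivially, indeed non-solvably, on $H^{*,*}(M)$; this is exactly the content of Lescure's examples \cite{Le}, the very reference the paper leans on at this point. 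If your deformation argument were valid, it would show that $G_M$ always acts trivially on Dolbeault cohomology, contradicting those examples. Two further symptoms of the problem: your argument never uses the compactness of $K$, which is an essential hypothesis; and your detour through de Rham cohomology cannot be repaired, because on a general (non-K\"ahler) compact complex manifold $H^{*}(M,\C)$ carries no $(p,q)$-decomposition, so ``restricting to the $(p,q)$-components'' of the de Rham representation is not meaningful. (Also, your worry that elements of $K$ might not preserve the bigrading is a misreading of the hypothesis: $K$ acts by biholomorphisms, so every $g^{*}$ preserves $\Omega^{p,q}(M)$ and commutes with $\bar\partial$; what fails for $K$ is holomorphicity of the representation, since $K$ is not a complex Lie group.)

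The correct route to triviality of the $H$-action, which is what the paper does, uses compactness of $K$ and holomorphicity of the $H$-action in tandem. Since every element of $K$ is a biholomorphism, $K$ acts $\C$-linearly on the finite-dimensional space $H^{*,*}(M)$, and compactness of $K$ gives (by averaging) a $K$-invariant Hermitian inner product, so the representation of $K$ --- hence of $H$ --- is unitary. Since $H$ is a complex Lie group acting holomorphically on $M$, the representation $H\to GL(H^{*,*}(M))$ is holomorphic (this non-trivial fact is taken from \cite{Le}). A holomorphic representation of a connected complex Lie group with unitary image is trivial: its differential is a $\C$-linear map $\mathfrak{h}\to\mathfrak{u}(H^{*,*}(M))$, and since $\mathfrak{u}\cap\sqrt{-1}\,\mathfrak{u}=0$, the differential vanishes. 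With this replacement for your deformation step, your density argument then finishes the proof exactly as in the paper.
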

\begin{proof} 
By Proposition \ref{prop:torusdol},
we only need to know that the representation of $K$ on $H^{*,*}(M)$ is trivial under the assumptions of this proposition.
Since $K$ acts on $M$ as biholomorphisms, the representation of $K$ on $H^{*,*}(M)$ is $\C$-linear. Since $K$ is compact, there exists a Hermitian inner product on $H^{*,*}(M)$ that is invariant under $K$. 

Consider  the restricted representation  $H\to GL(H^{*,*}(M))$.
Since $H$ is a complex Lie group and the restricted action of $K$ to $H$ on $M$ is holomorphic,  this representation is  holomorphic (\cite{Le}).
On the other hand, by the same argument as above, this representation is unitary.
Therefore the representation of $H$ on $H^{*,*}(M)$ is trivial.
 Since $H$ is dense in $K$, the representation of $K$ on $H^{*,*}(M)$ is also trivial. The proposition is proved. 
\end{proof}

\subsection{Models for torus actions}
Let $T$ be a compact torus and $H$ a connected Lie subgroup (not necessary to be closed in $T$). Let $M$ be a paracompact smooth manifold equipped with an action of $T$.
In this section, we suppose that the restricted action of $T$ to $H$ on $M$ is local free. 
Denote by $\frak t$ and $\frak h$ the Lie algebras of $T$ and $H$ respectively.

\begin{lemma}\Label{lemma:connection}
	There exists a $\h$-valued $1$-form $\omega$ on $M$ such that 
	\begin{enumerate}
		\item $i_{X_v}\omega = v$ for all $v \in \h$, 
		\item $\omega$ is $T$-invariant. 
	\end{enumerate}
\end{lemma}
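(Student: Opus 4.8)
The plan is to realize $\omega$ as an orthogonal-projection-type connection form built from a $T$-invariant metric. First I would use paracompactness to choose any Riemannian metric on $M$ and then average it over the compact torus $T$ against the normalized Haar measure to obtain a $T$-invariant metric $g$; this averaging is harmless precisely because $T$ is compact. The key geometric input is local freeness: since the $H$-action is locally free, the isotropy subalgebras are trivial, so for each $p \in M$ the evaluation map $\h \to T_pM$, $v \mapsto (X_v)_p$, is injective and its image is a $k$-dimensional subspace of $T_pM$.

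Fixing a basis $v_1, \dots, v_k$ of $\h$, the fundamental vector fields $X_{v_1}, \dots, X_{v_k}$ are therefore pointwise linearly independent, so the Gram matrix $G_{ij} := g(X_{v_i}, X_{v_j})$ is a smooth, everywhere positive definite (hence invertible) matrix of functions on $M$; denote its inverse by $(G^{ij})$. I would then define the $\h$-valued $1$-form
\[
	\omega := \sum_{i,j} G^{ij}\, g(X_{v_j}, -)\, v_i .
\]
A direct computation gives $i_{X_{v_l}}\omega = \sum_{i,j} G^{ij} G_{jl}\, v_i = v_l$, and by linearity $i_{X_v}\omega = v$ for every $v \in \h$, establishing (1). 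For the $T$-invariance in (2), the decisive point is that $T$ is abelian: because $\operatorname{Ad}$ is trivial on $T$, for each $t \in T$ the pushforward $t_*$ fixes each $X_{v_i}$, i.e.\ the $X_{v_i}$ are $T$-invariant. Since $g$ is $T$-invariant as well, each function $G_{ij}$, hence each entry $G^{ij}$ of the inverse, and each $1$-form $g(X_{v_j}, -)$ is $T$-invariant; as the coefficients $v_i \in \h$ are constant and fixed by $T$, the form $\omega$ is $T$-invariant.

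The only points requiring care are the passage from local freeness to pointwise independence of the $X_{v_i}$, which is exactly what guarantees invertibility of $G$, and the use of the abelian structure of $T$ to ensure the construction is genuinely $T$-invariant and not merely $H$-invariant. As an alternative I note that one could first build any $1$-form $\omega_0$ with $i_{X_v}\omega_0 = v$ by a partition-of-unity argument and then average it over $T$; the averaging preserves property (1) precisely because the $X_v$ are $T$-invariant. The Gram-matrix construction above, however, yields (1) and (2) simultaneously, so I would prefer it.
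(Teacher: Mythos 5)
Your proof is correct, but it takes a genuinely different route from the paper's. The paper argues local-to-global via the slice theorem: it covers $M$ by $T$-invariant open sets $U_\lambda \cong T\times_{T_\lambda}V_\lambda$, notes that local freeness of the $H$-action forces $\mathfrak{h}\cap\mathfrak{t}_\lambda=0$ so that a form with properties (1) and (2) exists on each homogeneous quotient $T/T_\lambda$, pulls these back, and glues them with a $T$-invariant partition of unity (condition (1) survives the gluing because it is an affine condition and the $\rho_\lambda$ sum to $1$). Your argument is instead global and metric-theoretic: average a metric to make it $T$-invariant, observe that local freeness makes each evaluation map $\mathfrak{h}\to T_pM$ injective so that the Gram matrix $G_{ij}=g(X_{v_i},X_{v_j})$ is everywhere invertible, and write down the orthogonal-projection connection form explicitly; $T$-invariance then follows from invariance of $g$ together with $t_*X_{v}=X_{\operatorname{Ad}_t v}=X_{v}$, which is exactly where you correctly use that $T$ is abelian. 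Your route buys an explicit closed formula and avoids invoking the equivariant slice theorem (the only nonelementary inputs being a partition of unity to build some metric and Haar averaging), while the paper's route keeps the local structure of the $T$-action visible, in the form of the models $T\times_{T_\lambda}V_\lambda$ that are useful elsewhere in this kind of argument. Both proofs use compactness of $T$ and paracompactness of $M$ in essentially the same places, and your fallback suggestion (glue an arbitrary solution of (1) by a partition of unity and then average over $T$, using $T$-invariance of the $X_v$ to preserve (1)) is in effect a hybrid of the two.
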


\begin{proof}
	Since $T$ is compact and $M$ is paracompact, it follows from the slice theorem that there exists a locally finite open covering $\mathcal{U} = \{ U_\lambda \}_\lambda$ such that each $U_\lambda$ is $T$-equivariantly diffeomorphic to $T \times_{T_\lambda}V_\lambda$ via $\varphi _\lambda$, where $T_\lambda$ is a closed subgroup of $T$ and $V_\lambda$ is a representation space of $T_\lambda$. 
Let $\pi : T \times _{T_\lambda}V_\lambda \to T/T_\lambda$ be the map induced by the first projection $T \times V_\lambda \to T$. Since the action of $H$ on $M$ is local free, we have that $\h \cap \frak t_\lambda = 0$. Therefore there exists a $\h$-valued $1$-form $\omega_\lambda$ on $T/T_\lambda$ that satisfies the conditions (1) and (2). Since $\pi$ and $\varphi_\lambda$ are $T$-invariant, the pull-back $(\pi \circ \varphi_\lambda)^*\omega_\lambda$ that is a $\h$-valued $1$-form on $U_\lambda$ also satisfies the conditions (1) and (2). 
	
	Let $\{\rho_\lambda\}$ be a partition of unity subordinate to the open covering $\mathcal{U}$. Averaging $\rho_\lambda$ with the normalized Haar measure on $T$, we may assume that every $\rho_\lambda$ is $T$-invariant. Then the $1$-form 
	\begin{equation*}
		\omega := \sum_{\lambda} \rho_\lambda (\pi \circ \varphi_\lambda)^*\omega_\lambda
	\end{equation*}
	on $M$ satisfies the condition (1) and (2), as required. 
\end{proof}

Since the $H$-action is local free, 
the $H$-action \ishida{old:implies} induces the foliation $\mathcal F$ \ishida{old: on $M$} whose leaves are $H$-orbits of $M$. 
Denote by $T^{\prime}$ the closure of $H$.
\begin{lemma}\Label{lemmaGeqH}
\ishida{別行立てでしたが, これをやめました} 
$\Omega^{*}(M)^{T^{\prime}}=\Omega^{*}(M)^{H}$. 
\end{lemma}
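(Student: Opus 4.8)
The plan is to prove the equality $\Omega^{*}(M)^{T'}=\Omega^{*}(M)^{H}$ by showing the two invariance conditions are equivalent, using that $T'$ is the closure of $H$. Recall that for a connected Lie group, invariance of a differential form under the group action is equivalent to the infinitesimal condition $L_{X_v}\omega=0$ for all $v$ in the Lie algebra. Since $T'$ is connected (it is the closure of the connected subgroup $H$, hence connected) and $H$ is connected, I can translate both invariance conditions into infinitesimal ones in terms of Lie derivatives.

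The inclusion $\Omega^{*}(M)^{T'}\subset \Omega^{*}(M)^{H}$ is immediate, since $H\subset T'$. For the reverse inclusion, first I would observe that a form $\omega$ is $H$-invariant if and only if $L_{X_v}\omega=0$ for every $v\in\frak h$, by connectedness of $H$. The key point is that the Lie algebra of $T'$ equals $\frak h$: since $T'$ is the closure of the connected subgroup $H$ inside the torus $T$, and the closure of a connected subgroup of a torus is again a subtorus whose Lie algebra is the closure of $\frak h$ in $\frak t$, but $\frak h$ is already a linear subspace of $\frak t$, hence closed, so $\operatorname{Lie}(T')=\frak h$. Therefore the infinitesimal generators of $T'$ are exactly the fundamental vector fields $X_v$ with $v\in\frak h$. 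Consequently, $L_{X_v}\omega=0$ for all $v\in\frak h$ implies $L_{X_u}\omega=0$ for all $u\in\operatorname{Lie}(T')$, and by connectedness of $T'$ this gives $g^{*}\omega=\omega$ for all $g\in T'$, i.e.\ $\omega\in\Omega^{*}(M)^{T'}$.

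Alternatively, and perhaps more transparently, I would argue by a density/continuity argument: the map $T'\ni g\mapsto g^{*}\omega\in\Omega^{*}(M)$ is continuous (in an appropriate topology, or pointwise on each finite jet), and if $\omega$ is $H$-invariant then $g^{*}\omega=\omega$ on the dense subset $H\subset T'$; by continuity $g^{*}\omega=\omega$ for all $g\in T'$. This avoids explicit mention of Lie algebras but requires being careful about which topology makes $g\mapsto g^{*}\omega$ continuous. The main obstacle, such as it is, lies in justifying this continuity cleanly, or equivalently in justifying that $\operatorname{Lie}(T')=\frak h$; both are elementary facts about closures of connected subgroups of compact tori, but they are exactly where the statement's content resides. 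Since everything else is a formal manipulation of invariance conditions, I expect the proof to be short once this density/Lie-algebra identification is in place.
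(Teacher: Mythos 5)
Your second (``alternative'') argument is the paper's proof: for $g \in T'$ pick a sequence $g_i \in H$ with $g_i \to g$; then $g^{*}\omega = \lim_{i\to\infty} g_i^{*}\omega = \omega$ for any $H$-invariant $\omega$, the required continuity being only that $g \mapsto (g^{*}\omega)_p$ is continuous in $g$ for a fixed smooth form $\omega$ and each $p\in M$, which follows from smoothness of the action. Had you committed to that argument alone, your proof would be correct and essentially identical to the paper's.

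However, your primary argument rests on a false claim, and your closing assertion that the two routes are ``equivalent'' is also false. It is not true that $\operatorname{Lie}(T') = \mathfrak{h}$: the Lie algebra of the closure of a connected subgroup of a torus is in general strictly larger than $\mathfrak{h}$, and the observation that $\mathfrak{h}$ is a closed linear subspace of $\mathfrak{t}$ is beside the point --- what fails to be closed is the subgroup $H=\exp(\mathfrak{h})$, not the subspace $\mathfrak{h}$. The standard counterexample is the irrational winding: $\mathfrak{h} = \mathbb{R}(1,\alpha)\subset\mathbb{R}^2$ with $\alpha$ irrational is closed as a subspace, yet the closure of the corresponding one-parameter subgroup is the whole $2$-torus, whose Lie algebra is $\mathbb{R}^2$. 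In fact, if $\operatorname{Lie}(T')=\mathfrak{h}$ held, then $H$ (connected, contained in the connected group $T'$, with the same Lie algebra) would coincide with $T'$, i.e.\ $H$ would already be closed --- precisely the degenerate case. In the paper's setting $H$ is typically a dense \emph{proper} subgroup of $T'$ (this is the entire reason $T'$ is introduced), so your first argument proves the lemma only when it is trivial. The content of the lemma is exactly that invariance under the fundamental vector fields of $\mathfrak{h}$ forces invariance under the strictly larger group $T'$, and the only mechanism for that is density plus continuity, not a Lie-algebra identification.
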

\begin{proof}
	Since $H \subset T'$, we have the inclusion $\Omega^*(M)^{T'} \subset \Omega^*(M)^{H}$. 
	For $g \in T'$, take a sequence $\{g_i\}_{i =1,\dots}$ of elements in $H$ so that $\lim_{i \to \infty}g_i = g$. Then we have 
	\begin{equation*}
		g^{*}\omega =\lim_{i \to \infty} g_{i}^{*}\omega=\omega
	\end{equation*}
	for any $\omega \in \Omega^*(M)^{H}$, showing the opposite inclusion $\Omega^*(M)^{T'} \supset \Omega^*(M)^{H}$. The lemma is proved. 
\end{proof}

Consider the basic forms
\[\Omega^{*}_{B}(M)=\{\omega\in \Omega^{*}(M) \mid i_{X_{v}}\omega=L_{X_{v}}\omega=0,\, \forall v\in\h\}. 
\]

We want to construct a finite dimensional subspace $W \subset \Omega ^1(M)^H$ such that 
\begin{itemize}
\item $dW\subset \Omega^{2}_{B}(M)$,

\item the bilinear map $\h\times W\ni (v,w)\mapsto i_{X_v}w\in \R$ is non-degenerate.
\end{itemize}
To do this, take  a $\h$-valued $1$-form $\omega$ as in Lemma \ref{lemma:connection}. 
For a basis $v_1,\dots, v_k$ of $\h$, we may write \ishida{old: $w = \sum w_iv_i$} $w =\sum_{i=1}^k w_i\otimes v_i$ with $1$-forms $w_1,\dots, w_k$. We claim that  $dw_i \in \Omega_B^2(M)$. Since $w_i$ is $T$-invariant, by Cartan formula we have 
\begin{equation*}
	0 = L_{X_v} w_i = di_{X_v}w_i + i_{X_v}dw_i = i_{X_v}dw_i
\end{equation*}
for $v \in \h$ because $i_{X_v}w_i$ is constant on $M$. By Cartan formula again, 
\begin{equation*}
	L_{X_v}dw_i = di_{X_v}dw_i + i_{X_v}ddw_i = di_{X_v}dw_i.
\end{equation*}
This together with $i_{X_v}dw_i =0$ yields that $dw_i \in \Omega^*_B(M)$. Then $W=\langle w_{1},\dots, w_{l}\rangle$ is a desired space.

\begin{prop}\Label{HeqW}
We have the decomposition 
\[\Omega^{*}(M)^{H} = \Omega^*_{B}(M)\otimes \bigwedge W.
\]
\end{prop}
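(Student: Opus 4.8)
The plan is to show that the natural wedge‑product map
\[
\Phi\colon \Omega^*_B(M)\otimes \textstyle\bigwedge W \longrightarrow \Omega^*(M)^H, \qquad \alpha\otimes (w_{i_1}\wedge\cdots\wedge w_{i_p})\mapsto \alpha\wedge w_{i_1}\wedge\cdots\wedge w_{i_p},
\]
is a well-defined isomorphism of graded algebras. It is well defined and lands in $\Omega^*(M)^H$ because every basic form is $H$-invariant (it is annihilated by all $L_{X_v}$ and $H$ is connected) and each $w_i\in W\subset\Omega^1(M)^H$, so every product $\alpha\wedge w_{i_1}\wedge\cdots\wedge w_{i_p}$ is $H$-invariant; it is an algebra homomorphism by the Koszul sign convention. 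By the non-degeneracy of the pairing $\h\times W\to\R$ we have $\dim W=\dim\h=:k$, and we may fix \emph{dual} bases $v_1,\dots,v_k$ of $\h$ and $w_1,\dots,w_k$ of $W$ with $i_{X_{v_j}}w_i=\delta_{ij}$. Writing $\iota_j:=i_{X_{v_j}}$ and $e_i:=w_i\wedge(-)$, these operators satisfy the Clifford-type relations $\iota_i e_j+e_j\iota_i=\delta_{ij}$, $\iota_i\iota_j+\iota_j\iota_i=0$, $e_ie_j+e_je_i=0$, which drive the whole argument.

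First I would establish a fiberwise decomposition. At each $p\in M$ the local freeness of the $H$-action makes $X_{v_1},\dots,X_{v_k}$ pointwise linearly independent, spanning $T_p\mathcal F$, while $i_{X_{v_j}}w_i=\delta_{ij}$ shows that $(w_1)_p,\dots,(w_k)_p$ are linearly independent and meet the annihilator $\operatorname{ann}(T_p\mathcal F)$ only in $0$. A dimension count then gives $T_p^*M=\operatorname{ann}(T_p\mathcal F)\oplus\langle (w_i)_p\rangle$, hence $\bigwedge T_p^*M=\bigwedge\operatorname{ann}(T_p\mathcal F)\otimes\bigwedge\langle (w_i)_p\rangle$. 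Consequently every form $\omega$ admits a unique expansion $\omega=\sum_J \alpha_J\wedge w_J$ (over $J=\{j_1<\dots<j_p\}$, $w_J=w_{j_1}\wedge\cdots\wedge w_{j_p}$) with each $\alpha_J$ \emph{horizontal}, i.e.\ $\iota_j\alpha_J=0$ for all $j$. The components are recovered by smooth bundle operators, for instance $\alpha_J=\pm\,h(\iota_{j_p}\cdots\iota_{j_1}\omega)$ where $h:=\prod_i(1-e_i\iota_i)$ is the smooth projection onto horizontal forms (the $e_i\iota_i$ are commuting idempotents by the Clifford relations); in particular the $\alpha_J$ are smooth. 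Uniqueness of this expansion yields injectivity of $\Phi$ at once: if $\sum_J\alpha_J\wedge w_J=0$ with the $\alpha_J$ horizontal then every $\alpha_J=0$.

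It remains to prove surjectivity, namely that when $\omega\in\Omega^*(M)^H$ the horizontal components $\alpha_J$ are not merely horizontal but \emph{basic}; this is the crux, and it rests on $T$ being abelian. Since $[X_{v},X_{v'}]=X_{[v,v']}=0$, the Lie derivative $L_{X_v}$ commutes with every contraction $\iota_j$, and since each $w_i$ is $T$-invariant, $L_{X_v}$ also commutes with each $e_i$, hence with $h$ and with the whole extraction operator $\alpha_J=\pm\,h\,\iota_{j_p}\cdots\iota_{j_1}(-)$. Therefore $L_{X_v}\alpha_J=\pm\,h\,\iota_{j_p}\cdots\iota_{j_1}(L_{X_v}\omega)=0$ for $\omega$ $H$-invariant; together with horizontality $\iota_v\alpha_J=0$ this gives $\alpha_J\in\Omega^*_B(M)$, so $\omega=\Phi\bigl(\sum_J\alpha_J\otimes w_J\bigr)$ lies in the image. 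I expect the only genuinely delicate point to be exactly this descent of invariance to the components: it is what forces us to use $T$-invariant $w_i$ together with the commutativity of $T$, since without abelianness $L_{X_v}$ would not pass through the contractions $\iota_j$ and the extracted $\alpha_J$ could fail to be basic.
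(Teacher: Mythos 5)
Your proof is correct, and its essential ingredients coincide with the paper's: dual bases $v_1,\dots,v_k$ of $\h$ and $w_1,\dots,w_k$ of $W$ coming from the non-degenerate pairing, the $H$-invariance of the $w_i$, and the fact that $L_{X_v}$ slides past the contractions $i_{X_{v_j}}$ because $\h$ is abelian --- this last point is indeed the crux, exactly as you identify. The packaging, however, is genuinely different. The paper proves the equality as a double inclusion and handles the hard inclusion $\Omega^*(M)^H \subset \Omega^*_B(M)\otimes\bigwedge W$ by a descending induction on what it calls the $q$-type of a form: if $\omega$ is annihilated by any $q$ contractions, then each $(q-1)$-fold contraction $i_{X_{v_{i_1}}}\cdots i_{X_{v_{i_{q-1}}}}\omega$ is basic (by the same commutation argument you use), and subtracting $\sum_{i_1<\cdots<i_{q-1}}\bigl(i_{X_{v_{i_1}}}\cdots i_{X_{v_{i_{q-1}}}}\omega\bigr)\wedge w_{i_1}\wedge\cdots\wedge w_{i_{q-1}}$ produces a form of $(q-1)$-type; iterating terminates at basic forms. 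Your argument replaces this induction by a one-shot fiberwise splitting $T_p^*M=\operatorname{ann}(T_p\mathcal F)\oplus\langle (w_i)_p\rangle$ together with the closed-form extraction operators $\pm\,h\,\iota_{j_p}\cdots\iota_{j_1}$ with $h=\prod_i(1-e_i\iota_i)$; these are really the same computation run in opposite directions, since your projection $h$ is precisely what the paper's successive subtractions compute term by term. What your version buys is an explicit proof of uniqueness of the components, i.e.\ injectivity of the multiplication map $\Omega^*_B(M)\otimes\bigwedge W\to\Omega^*(M)^H$, which the paper's double-inclusion formulation leaves implicit (and later uses, e.g.\ in Proposition \ref{prop:homori}, where the uniqueness of $\beta$ is invoked). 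What the paper's version buys is economy: it never needs the pointwise linear algebra, the Clifford relations, or the smoothness discussion for the extracted components, because its correction terms are manifestly smooth global forms.
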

\begin{proof}
For $\omega\in \Omega^{*}_{B}(M)$, the condition $L_{X_v}\omega =$ for all $v \in \h$ implies  that $\omega\in \Omega^{*}(M)^H$.
Since $W\subset \Omega^{1}(M)^{H}$ and $\h\times W\ni (v,w)\mapsto i_{X_v}w\in \R$ is non-degenerate, we have the inclusion 
\[\Omega^{*}_{B}(M)\otimes \bigwedge W\subset \Omega^{*}(M)^{H}.
\]
We will show that $\Omega^{*}(M)^{H}\subset \Omega^{*}_{B}(M)\otimes \bigwedge W$.
We say that $\omega \in \Omega^{*}(M)^{H}$ is of $q$-type if for any $v_{1},\dots,v_{q}\in \h$ we have
\[i_{X_{v_{1}}}\dots i_{X_{v_{q}}}\omega=0.
\]
If $\omega \in \Omega^{*}(M)^{H}$ is of $1$-type,
 $\omega \in \Omega^{*}_{B}(M)$.
Suppose that  $\omega\in \Omega^{*}(M)^{H}$ is of $q$-type for some $q\ge 2$.
Then for any $v, v_1,\dots, v_{q-1} \in \h$, we have that
\[i_{X_{v}}i_{X_{v_{1}}}\dots i_{X_{v_{q-1}}}\omega=0
\]
and 
\[L_{X_{v}}i_{X_{v_{1}}}\dots i_{X_{v_{q-1}}}\omega =i_{X_{v_{1}}}\dots i_{X_{v_{q-1}}} L_{X_{v}} \omega=0.
\]
Therefore we have that 
\[i_{X_{v_{1}}}\dots i_{X_{v_{q-1}}}\omega\in\Omega^{*}_{B}(M).
\]
Take a basis $v_{1},\dots v_{k}$ of $\h$ and the dual basis 
$w_{1},\dots ,w_{k}$ of $W$ given by $W\subset \Omega^{1}(M)^{H}$ and $\h\times W\ni (v,w)\mapsto i_{X_v}w\in \R$.
Then for  $\omega\in \Omega^{*}(M)^{H}$  of $q$-type,
we can see that the form
\[\omega^{\prime}=\omega-\sum _{i_{1}<i_{2}<\dots<i_{q-1}}(i_{X_{v_{i_1}}}\dots i_{X_{v_{i_{q-1}}}}\omega)\wedge w_{i_{1}}\wedge\dots\wedge w_{i_{q-1}}.
\]
is of $(q-1)$-type. It turns out that $\omega-\omega' \in \Omega_B^*(M)\otimes \bigwedge^{q-1}W$. Since $\omega'$ is of $(q-1)$-type, applying the same argument eventually, we have that 
\[
\omega \in \bigoplus_{0 \leq j\leq q-1} \Omega_B^*(M)\otimes \bigwedge ^jW,
\]
showing the inclusion $\Omega^*(M)^H \subset \Omega_B^*(M)\otimes \bigwedge W$. The proposition is proved. 
\end{proof}

By Propositions \ref{cptderham}, \ref{HeqW} and Lemma \ref{lemmaGeqH}, we have the following result.

\begin{cor}\label{cor:tormodel}
The inclusion 
\[\Omega^{*}_{B}(M)\otimes \bigwedge W\to \Omega^{*}(M)
\]
induces \ishida{old:an} a cohomology isomorphism. 
\end{cor}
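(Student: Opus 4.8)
The plan is to realize the inclusion $\Omega^*_B(M)\otimes\bigwedge W\hookrightarrow\Omega^*(M)$ as a composite of the identifications supplied by the three preceding results, and then to invoke the averaging principle for compact group actions. First I would observe that the closure $T'$ of $H$ inside the compact torus $T$ is a closed connected subgroup of $T$, hence itself a subtorus; in particular $T'$ is a compact connected Lie group acting on the (compact) manifold $M$. This is the only place where I must leave the possibly non-closed subgroup $H$ and pass to its closure, and it is exactly what makes the compact-group machinery available.

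Next I would apply Proposition \ref{cptderham} to $K=T'$: the inclusion of invariant forms $\Omega^*(M)^{T'}\hookrightarrow\Omega^*(M)$ induces an isomorphism on cohomology. By Lemma \ref{lemmaGeqH} the invariance under $T'$ and under $H$ coincide, so $\Omega^*(M)^{T'}=\Omega^*(M)^H$ as subcomplexes of $\Omega^*(M)$; thus the inclusion $\Omega^*(M)^H\hookrightarrow\Omega^*(M)$ is already a cohomology isomorphism. Finally, Proposition \ref{HeqW} identifies this invariant subcomplex with $\Omega^*_B(M)\otimes\bigwedge W$ \emph{as a subalgebra of} $\Omega^*(M)$, so the inclusion in the statement is literally the same map. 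Chaining the equalities, $\Omega^*_B(M)\otimes\bigwedge W=\Omega^*(M)^H=\Omega^*(M)^{T'}\hookrightarrow\Omega^*(M)$ induces a cohomology isomorphism, which is the assertion.

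The only genuine point to check, rather than to cite, is that all three identifications are compatible with a single inclusion into $\Omega^*(M)$: both Lemma \ref{lemmaGeqH} and Proposition \ref{HeqW} produce \emph{equalities} of subspaces of $\Omega^*(M)$, not merely abstract isomorphisms, so no commuting square has to be verified and the composite is unambiguous. I expect the main (mild) obstacle to be bookkeeping about compactness and closedness: Proposition \ref{cptderham} is stated for a compact manifold and a compact connected Lie group, so I must ensure that $M$ is compact and that the group fed into it is $T'$, not the possibly dense $H$. Once $T'$ is recognized as a subtorus these hypotheses are met, and the remainder of the argument is purely formal assembly of the cited statements.
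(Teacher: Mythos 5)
Your proposal is correct and is essentially the paper's own argument: the authors derive the corollary by exactly this chain, citing Proposition \ref{cptderham} (applied to the compact closure $T'$ of $H$), Lemma \ref{lemmaGeqH} to identify $\Omega^*(M)^{T'}=\Omega^*(M)^H$, and Proposition \ref{HeqW} to identify $\Omega^*(M)^H=\Omega^*_B(M)\otimes\bigwedge W$ as subalgebras of $\Omega^*(M)$. Your additional remarks --- that $T'$ is a subtorus, and that the identifications are equalities of subcomplexes so no compatibility squares need checking --- are exactly the unstated bookkeeping the paper leaves to the reader.
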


\begin{prop}\Label{prop:homori}
	Suppose that $\dim M = n+k$. Then, $H^{n+k}(M) \cong H_B^n(M)$. In particular, $\mathcal F$ is homologically oriented if $M$ is compact and oriented. 
\end{prop}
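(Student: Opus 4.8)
The plan is to reduce everything to the basic complex by Corollary~\ref{cor:tormodel}, and then to read off the top cohomology of the bigraded complex $\Omega^*_B(M)\otimes\bigwedge W$ by a direct degree analysis. First I would fix the bookkeeping. Local freeness of the $H$-action identifies $\mathfrak h\xrightarrow{\sim} T_p\mathcal F$ via $v\mapsto (X_v)_p$, so the leaves are $k$-dimensional and $\mathcal F$ has codimension $n=\dim M-k$. Since a basic $p$-form is in particular horizontal ($i_{X_v}\omega=0$ for all $v\in\mathfrak h$), its value at each point lies in $\bigwedge^p$ of the $n$-dimensional annihilator of $T_p\mathcal F$; hence $\Omega^p_B(M)=0$ for $p>n$. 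Thus $\Omega^n_B(M)$ is the top nonzero basic degree, and $\bigwedge^k W=\langle\eta\rangle$ with $\eta=w_1\wedge\cdots\wedge w_k$ is one-dimensional.

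Next I would compute the differential in the highest degrees. Under the identification of Proposition~\ref{HeqW} the de Rham differential acts on a monomial $\alpha\otimes w_{i_1}\wedge\cdots\wedge w_{i_j}$ as $d\alpha\otimes w_{i_1}\wedge\cdots$ plus terms $\pm\,\alpha\wedge dw_{i_l}\wedge(\cdots)$ with each $dw_{i_l}\in\Omega^2_B(M)$. In degree $n+k$ the only monomials are $\alpha\otimes\eta$ with $\alpha\in\Omega^n_B(M)$; here $d\alpha\in\Omega^{n+1}_B(M)=0$ and each $\alpha\wedge dw_{i_l}\in\Omega^{n+2}_B(M)=0$, so the differential vanishes and every degree-$(n+k)$ element is closed. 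In degree $n+k-1$ the two surviving pieces are $\Omega^n_B(M)\otimes\bigwedge^{k-1}W$, which maps to zero for the same reason, and $\Omega^{n-1}_B(M)\otimes\bigwedge^k W$, on which $\beta\otimes\eta\mapsto d\beta\otimes\eta$ (the $W$-derivative terms again land in $\Omega^{\ge n+1}_B(M)=0$). Therefore the image of $d$ in degree $n+k$ is exactly $d\Omega^{n-1}_B(M)\otimes\eta$, whence
\[
H^{n+k}(M)\cong\frac{\Omega^n_B(M)\otimes\eta}{d\,\Omega^{n-1}_B(M)\otimes\eta}\cong\frac{\Omega^n_B(M)}{d\,\Omega^{n-1}_B(M)}=H^n_B(M),
\]
the last equality using $\Omega^{n+1}_B(M)=0$.

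Finally, for the orientation statement I would invoke Poincar\'e duality: when $M$ is compact and oriented, $H^{n+k}(M)=H^{\dim M}(M)\cong\R$, so the isomorphism above forces $H^n_B(M)\cong\R\neq 0$, which is precisely the homological orientability of $\mathcal F$ (nonvanishing of top-codimensional basic cohomology, as required for the El Kacimi--Alaoui Hodge theory). I expect the only genuinely delicate point to be the vanishing $\Omega^{>n}_B(M)=0$ together with the consequent collapse of the differential in the top two degrees; once that is established, everything else is formal given Corollary~\ref{cor:tormodel}.
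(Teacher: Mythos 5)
Your proof is correct and takes essentially the same route as the paper's: both reduce to the $H$-invariant complex $\Omega^*_B(M)\otimes\bigwedge W$ (your appeal to Corollary \ref{cor:tormodel} is exactly the paper's combination of Proposition \ref{cptderham}, Proposition \ref{HeqW} and Lemma \ref{lemmaGeqH}) and then exploit $\Omega^{p}_B(M)=0$ for $p>n$ together with $dw_j\in\Omega^2_B(M)$ to show that in the top two degrees the only surviving differential is $\beta\otimes w_1\wedge\cdots\wedge w_k\mapsto d\beta\otimes w_1\wedge\cdots\wedge w_k$. The paper organizes this as an exactness equivalence between the representatives $\alpha=\beta\wedge w_1\wedge\cdots\wedge w_k$ and $\beta$, which is the same degree computation you carry out at the level of the complex, and your explicit treatment of the orientability claim via nonvanishing of $H^{n+k}(M)$ is what the paper leaves implicit in ``in particular.''
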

\begin{proof}
	By Proposition \ref{cptderham} and Lemma \ref{lemmaGeqH}, we can choose a representative $\alpha$ of an element in $H^{n+k}(M)$ so that $\alpha$ sits in $\Omega^{n+k}(M)^H$. By Proposition \ref{HeqW}, there uniquely exists $\beta \in \Omega_B^n(M)$ such that $\alpha = \beta \wedge w_1\wedge \dots \wedge w_k$. 
	Conversely, for $\beta \in \Omega_B^n(M)$, $\alpha := \beta\wedge  w_1\wedge \dots \wedge w_k \in \Omega^n(M)^H$. Thanks to the degrees, $\alpha$ and $\beta$ both are automatically closed. Therefore it suffices to show that $\alpha$ is exact if and only if $\beta$ is exact (in the sense of basic). 
	
	Let $\alpha' \in \Omega^{n+k-1}(M)^H$ such that $d\alpha' =\alpha$. By Proposition \ref{HeqW}, we can write 
	\begin{equation*}
		\alpha' = \beta' \wedge w_1\wedge \dots \wedge w_k + \sum_{i=1}^l\beta_i\wedge w_1 \wedge \dots \wedge \hat{w_i} \wedge \dots \wedge w_k
	\end{equation*}
	with $\beta' \in \Omega_B^{n-1}(M)$ and $\beta_i \in \Omega_B^n(M)$ for $i=1,\dots, l$. Then, it follows from $dw_j \in \Omega_B^2(M)$ that $\alpha = d\alpha' = d\beta'\wedge w_1\wedge \dots \wedge w_k$. In particular, $\beta = d\beta'$. 
	
	To see the converse, let $\beta' \in \Omega^{n-1}_B(M)$ such that $d\beta' = \beta$. Then 
	\begin{equation*}
		d(\beta' \wedge w_1\wedge \dots \wedge w_k) = \beta \wedge w_1\wedge \dots \wedge w_k + (-1)^{n-1}\beta' \wedge d(w_1\wedge \dots \wedge w_k). 
	\end{equation*}
	Since $\beta' \wedge dw_j =0$ by the degree, we have that 
	\begin{equation*}
		\alpha = d (\beta' \wedge w_1\wedge \dots \wedge w_k), 
	\end{equation*}
	showing the equivalence of exactness between $\alpha$ and $\beta$. The proposition is proved. 
\end{proof}

\subsection{Models for transverse K\"ahler torus actions}
Let $M$ be a compact complex manifold and $T$ a compact torus acting on $M$ as biholomorphisms. 
Let $H$ be a dense Lie subgroup of $T$ such that $H$ is a complex Lie group and
the restricted action of $T$ to $H$ on $M$ is holomorphic and local free. 
Then we have a holomorphic central foliation $\mathcal F$ on $M$ whose leaves are $H$-orbits. As before, let $\Omega^*_B(M)$ denote the space of basic differential forms with respect to $\mathcal F$. Since $M$ is a complex manifold and the $H$-action is holomorphic, $\Omega^1(M)^H$ and $\Omega^1_B(M)$ both are complex vector spaces. 
\begin{prop}\Label{prop:W} There exists a $\C$-subspace $W$ of $\Omega^1(M)^H$ such that 
\begin{itemize}
	\item $dW \subset \Omega^2_B(M)$ and 
	\item $\h \times W \ni (v,w) \mapsto i_{X_v}w \in \R$ is non-degenerate.
\end{itemize}
\end{prop}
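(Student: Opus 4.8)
The plan is to reproduce the construction of the previous subsection almost verbatim, the only new requirement being that $W$ must be a $\C$-subspace, i.e.\ invariant under the complex structure $J$ that $M$ induces on $\Omega^1(M)$. The crucial extra input is holomorphicity: since the $H$-action is holomorphic and $\h$ is invariant under the complex structure $J$ of $\h$ inherited from $\g_M$, the fundamental vector fields satisfy $X_{Jv} = J X_v$ for all $v \in \h$. This compatibility is exactly what lets me upgrade the real construction to a $J$-invariant one of the correct dimension.

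First I would apply Lemma \ref{lemma:connection} to obtain an $\h$-valued, $T$-invariant $1$-form $\omega$ with $i_{X_v}\omega = v$ for all $v \in \h$. Viewing $\h$-valued $1$-forms as sections of $T^*M \otimes \h$, there are two commuting complex structures present (one from $J$ on $TM$, one from $J$ on $\h$), and I would replace $\omega$ by its $\C$-linear part
\[
\tilde\omega(X) := \tfrac{1}{2}\bigl(\omega(X) - J\,\omega(JX)\bigr),
\]
where the outer $J$ is the complex structure on $\h$ and the inner $J$ is that of $TM$. Using $X_{Jv}=JX_v$ and $J^2=-1$ one computes $i_{X_v}\tilde\omega = \tfrac12\bigl(v - J(Jv)\bigr) = v$, so the normalization survives; moreover $\tilde\omega$ remains $T$-invariant because $J$ on $M$ commutes with the holomorphic $T$-action and $J$ on $\h$ is a fixed linear map. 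Writing $\tilde\omega = \sum_i w_i\otimes v_i$ in a real basis $v_1,\dots,v_{2k}$ of $\h$ with $v_{k+i}=Jv_i$, I set $W := \langle w_1,\dots,w_{2k}\rangle_{\R}$. A direct check shows that the $\C$-linearity $\tilde\omega(JX)=J\tilde\omega(X)$ translates into $w_i\circ J = -w_{k+i}$ and $w_{k+i}\circ J = w_i$, so $JW \subset W$; hence $W$ is a $\C$-subspace of $\Omega^1(M)^H$, of complex dimension $k$.

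It remains to verify the two bullet conditions, and here I would argue exactly as in the real case. Each $w_i$ is $T$-invariant and $i_{X_v}w_i$ is constant on $M$ (it records a coordinate of $v$ via $i_{X_v}\tilde\omega=v$), so Cartan's formula gives $0 = L_{X_v}w_i = i_{X_v}dw_i$, and then $L_{X_v}dw_i = d\,i_{X_v}dw_i = 0$, whence $dw_i \in \Omega^2_B(M)$ and $dW \subset \Omega^2_B(M)$. Non-degeneracy of $\h\times W \ni (v,w)\mapsto i_{X_v}w$ is immediate from $i_{X_{v_j}}\tilde\omega = v_j$, which reads $i_{X_{v_j}}w_i = \delta_{ij}$; in particular the $w_i$ are linearly independent and dual to the $v_j$.

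The only genuinely delicate point is the passage to the $\C$-linear part: one must check that projecting $\omega$ to $\tilde\omega$ simultaneously preserves the normalization $i_{X_v}\tilde\omega = v$ and produces a $J$-invariant span of the correct complex dimension $k$ rather than a space of doubled dimension. Both facts rest essentially on the holomorphicity relation $X_{Jv}=JX_v$ together with the $J$-invariance of $\h$, and getting the sign in the projector right (so that $\tfrac12(v - J^2 v) = v$ rather than $0$) is the one computation that must be carried out with care.
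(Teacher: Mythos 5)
Your proof is correct and takes essentially the same approach as the paper: both start from the connection form of Lemma \ref{lemma:connection} and use holomorphicity of the $H$-action (the identity $X_{J_\mathfrak{h}v}=JX_v$, which the paper uses implicitly in its ``immediately'' step) to correct it into a $\C$-linear $\mathfrak{h}$-valued $1$-form whose components span the desired $J$-invariant $W$, with identical Cartan-formula and non-degeneracy verifications at the end. The only difference is cosmetic: the paper keeps the components $w_1,\dots,w_k$ dual to a $\C$-basis of $\mathfrak{h}$ and replaces the remaining ones by $w_i':=-w_i\circ J$, whereas you apply the averaging projector $\tilde\omega=\tfrac{1}{2}\bigl(\omega-J_\mathfrak{h}\,\omega\, J\bigr)$ to the whole form; both yield a $\C$-linear connection form.
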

\begin{proof}
	By Lemma \ref{lemma:connection}, there exists a $\mathfrak{h}$-valued $1$-form $w \in \Omega^1(M)\otimes \mathfrak{h}$  on $M$ such that $i_{X_v}w = v $ for all $v \in \mathfrak{h}$ and $H$-invariant. Let $v_1,\dots, v_k$ be a $\C$-basis of $\mathfrak{h}$ and $J_\mathfrak{h}$ the complex structure on $\mathfrak{h}$. Then $v_1,\dots, v_k, J_\mathfrak{h}v_1,\dots, J_\mathfrak{h}v_k$ form a $\R$-basis of $\mathfrak{h}$. There exist $w_1,\dots, w_k, w_{k+1},\dots, w_{2k} \in \Omega^1(M)^H$ such that 
	\begin{equation*}
		w = \sum_{i=1}^k w_i\otimes v_i + \sum_{j=1}^k w_{k+j}\otimes J_\mathfrak{h}v_j. 
	\end{equation*}
	For $i=1,\dots, k$, we define $w_i' \in \Omega^1(M)^H$ to be $w_i' = -w_i\circ J$, where $J$ denotes the complex structure on $M$. We define an $H$-invariant $\mathfrak{h}$-valued $1$-form 
	\begin{equation*}
		w' = \sum_{i=1}^k w_i\otimes v_i + \sum_{j=1}^k w_j' \otimes J_\mathfrak{h}v_j. 
	\end{equation*}
	It follows that $i_{X_v}w' = v$ for all $v \in \mathfrak{h}$ by definition of $w'$ immediately. 
	The subspace $W= \langle w_1,\dots, w_k, w_1', \dots, w_k'\rangle$ of $\Omega^1(M)^H$ is closed under $J$. It follows from the Cartan formula that $dW \subset \Omega_B^2(M)$ immediately. Therefore $W$ is a desired space, proving the proposition. 
\end{proof}
\ishida{old: It follows from Proposition \ref{HeqW} that there exists a finite dimensional complement $W$ of $\Omega^1_B(M)$ in $\Omega^1(M)^H$. Remark that any complement $W$ satisfies the conditions 
\begin{itemize}
	\item $dW \subset \Omega^2_B(M)$ and 
	\item $\h \times W \ni (v,w) \mapsto i_{X_v}w \in \R$ is non-degenerate.
\end{itemize}}
\ishida{old: Since the codimension of $\Omega^1_B(M)$ in $\Omega^1(M)^H$ is finite, there exists a complement $W$ which is closed under the complex structure. Then $W\otimes \C$ is decomposed into $(1,0)$-part $W^{1,0}$ and $(0,1)$-part $W^{0,1}$.}
Let $W$ be a $\C$-subspace of $\Omega^1_B(M)$ as in Proposition \ref{prop:W}. Then $W\otimes \C$ is decomposed into $(1,0)$-part $W^{1,0}$ and $(0,1)$-part $W^{0,1}$. By tensoring $\C$ with $\Omega^*(M)\otimes \bigwedge W$, we have the DBA 
\[
\Omega^{*,*}_B(M)\otimes\bigwedge (W^{1,0}\oplus W^{0,1})
\]
with the Dolbeault operator $\bar\partial$.

By  Propositions \ref{prop:torusdol}, \ref{HeqW} and Lemma \ref{lemmaGeqH}, we have the following result.

\begin{cor}\label{cor:dol}
We have an injection  
\[\Omega^{*,*}_{B}(M)\otimes \bigwedge (W^{1,0}\oplus W^{0,1})\to \Omega^{*,*}(M)
\]
which induces a cohomology isomorphism. 
\end{cor}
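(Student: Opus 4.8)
The plan is to treat this as the Dolbeault counterpart of Corollary \ref{cor:tormodel}. The map in question is the inclusion of DBAs obtained by tensoring the inclusion $\Omega^{*,*}_B(M)\subset\Omega^{*,*}(M)$ with the realization of $W^{1,0}\oplus W^{0,1}$ as $H$-invariant $(1,0)$- and $(0,1)$-forms; it is injective for the same reason the real version is, and it commutes with $\bar\partial$ because the elements of $W$ are $H$-invariant and $J$-closed with $dW\subset\Omega^2_B(M)$. The two things I would prove are that the source is precisely the invariant Dolbeault complex $\Omega^{*,*}(M)^H$, and that the inclusion $\Omega^{*,*}(M)^H\hookrightarrow\Omega^{*,*}(M)$ induces an isomorphism on Dolbeault cohomology.

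For the first point I would complexify Proposition \ref{HeqW} and split everything by Dolbeault bidegree. Since $H$ acts holomorphically, the bigrading on $\Omega^*(M)\otimes\C$ is $H$-invariant, so $\Omega^n(M)^H\otimes\C=\bigoplus_{p+q=n}\Omega^{p,q}(M)^H$. The subspace $W$ was chosen in Proposition \ref{prop:W} to be $J$-closed, so $W\otimes\C=W^{1,0}\oplus W^{0,1}$ with the stated bidegrees. The key technical check is that basic forms split into basic $(p,q)$-forms: because $\h$ is $J_{\h}$-invariant and the action is holomorphic, the fundamental vector fields satisfy $X_{J_{\h}v}=JX_v$, so contraction by both $X_v$ and $JX_v$ annihilates a basic form; decomposing $X_v$ into its $(1,0)$- and $(0,1)$-parts then shows $i_{X_v^{1,0}}\omega=i_{X_v^{0,1}}\omega=0$ separately, hence each bidegree component of a basic form is again basic and $\Omega^*_B(M)\otimes\C=\bigoplus_{p,q}\Omega^{p,q}_B(M)$. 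Feeding these three decompositions into the complexification of Proposition \ref{HeqW} and extracting bidegrees yields $\Omega^{*,*}(M)^H=\Omega^{*,*}_B(M)\otimes\bigwedge(W^{1,0}\oplus W^{0,1})$ as bigraded algebras, which identifies the source of the map with $\Omega^{*,*}(M)^H$.

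For the second point I would pass to the closure $T'=\overline{H}$, a subtorus of the compact torus $T$. The Dolbeault version of Lemma \ref{lemmaGeqH} gives $\Omega^{*,*}(M)^H=\Omega^{*,*}(M)^{T'}$; the same limit argument applies verbatim because biholomorphisms preserve the Dolbeault bigrading, so $g^*$ is continuous on each $\Omega^{p,q}(M)$ and $g^*\omega=\lim_i g_i^*\omega=\omega$ for $\omega\in\Omega^{*,*}(M)^H$. It then remains to invoke Proposition \ref{prop:torusdol} with $K=T'$, for which one needs the $T'$-action on $H^{*,*}(M)$ to be trivial; this is precisely the argument of Corollary \ref{cor:apdol}, namely that compactness of $T'$ makes the representation unitarizable while density of the holomorphically acting complex group $H$ makes it holomorphic, and a representation that is both unitary and holomorphic is trivial. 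Thus the inclusion $\Omega^{*,*}(M)^{T'}\subset\Omega^{*,*}(M)$ induces an isomorphism on Dolbeault cohomology, completing the proof.

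The step I expect to be the main obstacle is the bidegree bookkeeping in the second paragraph, that is, verifying that the purely real decomposition of Proposition \ref{HeqW} refines compatibly to the Dolbeault bigrading. Everything there rests on the holomorphicity of the $H$-action (to obtain $X_{J_{\h}v}=JX_v$ and the splitting of the basicness condition into separate $(1,0)$- and $(0,1)$-contractions) together with the $J$-invariance of $W$; once those are in place the remainder is a matter of matching graded pieces.
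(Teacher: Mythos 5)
Your proposal is correct and follows essentially the same route as the paper, which deduces the corollary by combining Proposition \ref{prop:torusdol} (with triviality of the action on Dolbeault cohomology supplied by the argument of Corollary \ref{cor:apdol}), Proposition \ref{HeqW}, and Lemma \ref{lemmaGeqH}. The bidegree bookkeeping you flag as the main obstacle (the splitting of basic forms into basic $(p,q)$-components via $X_{J_{\h}v}=JX_v$, and the complexification of Proposition \ref{HeqW}) is exactly what the paper leaves implicit in its one-line citation, and your verification of it is sound.
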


We consider the bi-graded bi-differential algebra (BBA) $(\Omega^{*,*}_{B}(M),\partial_{B},\bar\partial_{B})$.
Put $d^{c}=\sqrt{-1}(\bar\partial_B-\partial_B)$.
Then $d^{c}$ is a differential on $\Omega^{*}_{B}(M)$.
We say that  the $\partial_{B}\bar\partial_{B}$-lemma holds if
\[\ker \partial_{B}\cap \ker\bar\partial_{B}\cap \operatorname{im} d=\operatorname{im}  \partial_{B}\bar\partial_{B}.
\]
If the $\partial_{B}\bar\partial_{B}$-lemma holds, then 
we have the quasi-isomorphisms
\[(\ker d^{c}, d)\to (\Omega^{*}_{B}(M),d),
\]
\[(\ker d^{c}, d)\to (H_{B}^{*}(M),0),
\]
\[(\ker \partial_{B}, \bar\partial_{B})\to (\Omega^{*,*}_{B}(M),\bar\partial_{B})
\]
and 
\[(\ker \partial_{B}, \bar\partial_{B})\to (H_{B}^{*,*}(M),0)
\]
(see \cite{DGMS}).

\begin{prop}\Label{prop:quasiiso}
Suppose that   the $\partial_{B}\bar\partial_{B}$-lemma holds.
Then there exist quasi-isomorphisms
\[(\ker d^{c}\otimes \bigwedge W, d)\to (\Omega^{*}_{B}(M)\otimes \bigwedge W,d),
\]
\[(\ker d^{c}\otimes \bigwedge W, d)\to (H_{B}^{*}(M)\otimes \bigwedge W,d),
\]
\[(\ker \partial_B\otimes \bigwedge (W^{1,0}\oplus W^{0,1}), \bar\partial')\to (\Omega^{*,*}_{B}(M)\otimes \bigwedge (W^{1,0}\oplus W^{0,1}),\bar\partial)
\]
and 
\[(\ker \partial_{B}\otimes \bigwedge (W^{1,0}\oplus W^{0,1}), \bar\partial')\to (H_{B}^{*,*}(M)\otimes \bigwedge (W^{1,0}\oplus W^{0,1}),\bar\partial).
\]
Here, $\bar \partial'$ is a differential such that $(\bar \partial' -\bar\partial)w$ is $\partial _B$-exact for any $w \in W^{1,0}\oplus W^{0,1}$ and $\bar \partial'\alpha = \bar\partial\alpha$ for any $\alpha \in \ker \partial_B$.
\end{prop}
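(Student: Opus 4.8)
The plan is to realize each of the four algebras as a Hirsch extension of the base algebra occurring in one of the four quasi-isomorphisms that the $\partial_B\bar\partial_B$-lemma has just produced, and then to transport the quasi-isomorphisms across these extensions using Lemma~\ref{hir} (for the $d$-statements) and its bigraded analogue (for the $\bar\partial$-statements). By Proposition~\ref{prop:W} we have $dW\subset\Omega^2_B(M)$, so $(\Omega^*_B(M)\otimes\bigwedge W,d)$ is exactly the Hirsch extension of $(\Omega^*_B(M),d)$ along $w\mapsto dw$; writing $\bar\partial w$ for the component of $dw$ of bidegree $(1,1)$ when $w\in W^{1,0}$ and $(0,2)$ when $w\in W^{0,1}$ (these are basic since $dw$ is), the algebra $(\Omega^{*,*}_B(M)\otimes\bigwedge(W^{1,0}\oplus W^{0,1}),\bar\partial)$ is the corresponding $\bar\partial_B$-Hirsch extension of $(\Omega^{*,*}_B(M),\bar\partial_B)$ along $w\mapsto\bar\partial w$.

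The two $d$-statements would then follow directly from Lemma~\ref{hir}. Applying the second case of that lemma to the quasi-isomorphism $(\ker d^c,d)\to(\Omega^*_B(M),d)$ transports the extension of $\Omega^*_B(M)$ to an extension $(\ker d^c\otimes\bigwedge W,d)$ together with the asserted quasi-isomorphism onto $\Omega^*_B(M)\otimes\bigwedge W$; here the lemma itself chooses a generator-differential valued in $\ker d^c$ and representing $[dw]_B$, so I need not exhibit it by hand. A second application, now to $(\ker d^c,d)\to(H^*_B(M),0)$ with $\ker d^c\otimes\bigwedge W$ on the source side, produces the quasi-isomorphism onto $(H^*_B(M)\otimes\bigwedge W,d)$ whose transported differential is $w\mapsto[dw]_B$.

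For the $\bar\partial$-statements I would first record the bigraded analogue of Lemma~\ref{hir}: filtering a $\bar\partial$-Hirsch extension by exterior degree gives, by the same spectral sequence argument, an $E_2$-term of the form $H^{*,*}_B(M)\otimes\bigwedge(W^{1,0}\oplus W^{0,1})$ on which a base $\bar\partial_B$-quasi-isomorphism induces an isomorphism, hence a quasi-isomorphism of the extensions. The one genuine difficulty—and the step I expect to be the crux—is that, unlike in the $d$-case, I must produce the modified differential $\bar\partial'$ explicitly, because the naive generator-differential $w\mapsto\bar\partial w$ does not land in the subalgebra $\ker\partial_B$: from $d_B(dw)=0$ one reads off $\partial_B\bar\partial w=-\bar\partial_B\partial w$, which is in general nonzero.

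This is precisely where the $\partial_B\bar\partial_B$-lemma is needed. The form $\partial_B\bar\partial w$ is $\partial_B$-exact and $\bar\partial_B$-closed, so, in the equivalent form $\operatorname{im}\partial_B\cap\ker\bar\partial_B=\operatorname{im}(\partial_B\bar\partial_B)$, it lies in $\operatorname{im}(\partial_B\bar\partial_B)$; choosing $\tau_w$ with $\partial_B\bar\partial_B\tau_w=\partial_B\bar\partial w$ and setting $\bar\partial'w:=\bar\partial w-\bar\partial_B\tau_w$ (and $\bar\partial'=\bar\partial_B$ on $\ker\partial_B$) puts $\bar\partial'w$ into $\ker\partial_B\cap\ker\bar\partial_B$, so $\bar\partial'$ is a genuine differential on $\ker\partial_B\otimes\bigwedge(W^{1,0}\oplus W^{0,1})$, while the correction $\bar\partial'w-\bar\partial w=-\bar\partial_B\tau_w$ lies in $\operatorname{im}\bar\partial_B$, so that $\bar\partial'w$ represents $[\bar\partial w]_B$. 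Thus $\iota\otimes\mathrm{id}$ carries this extension to the extension of $\Omega^{*,*}_B(M)$ built from $\iota\circ\bar\partial'$, which has the same class $q\circ\beta$ and is therefore isomorphic to the given one; by the bigraded analogue above this map is a quasi-isomorphism, giving the third statement. Transporting $(\ker\partial_B\otimes\bigwedge(W^{1,0}\oplus W^{0,1}),\bar\partial')$ once more across $(\ker\partial_B,\bar\partial_B)\to(H^{*,*}_B(M),0)$ then gives the fourth, with generator-differential $w\mapsto[\bar\partial w]_B$.
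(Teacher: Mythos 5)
Your proposal is correct and takes essentially the same approach as the paper, whose entire proof of this proposition is the single line ``This follows from Lemma \ref{hir} immediately'': like the paper, you apply Lemma \ref{hir} to the four quasi-isomorphisms supplied by the $\partial_{B}\bar\partial_{B}$-lemma, merely making explicit what the paper leaves implicit (the Hirsch-extension structures, the bigraded analogue of the lemma, and the construction of $\bar\partial'$). One remark: your correction $\bar\partial'w-\bar\partial w=-\bar\partial_B\tau_w$ is $\bar\partial_B$-exact rather than $\partial_B$-exact as stated in the proposition, and this is in fact the condition that must hold, since a $\partial_B$-exact correction can never restore $\partial_B$-closedness ($\partial_B(\bar\partial w+\partial_B\sigma)=\partial_B\bar\partial w\neq 0$ in general); the proposition's ``$\partial_B$-exact'' is evidently a misprint that your construction silently corrects.
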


\begin{proof}
This follows from Lemma \ref{hir} immediately.

\end{proof}

\begin{thm}[see \cite{DGMS} and \cite{EKA}]
Let $M$ be a compact manifold with a homologically oriented (that is, $H_{B}^{\operatorname{codim}\mathcal F}(M)\not=0$) transversely K\"ahler foliation $\mathcal F$.
Then for the BBA $(\Omega^{*,*}_{B}(M),\partial_{B},\bar\partial_{B})$, the $\partial_{B}\bar\partial_{B}$-lemma holds.
\end{thm}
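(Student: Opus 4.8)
The plan is to transfer the classical proof of the $\partial\bar\partial$-lemma on compact Kähler manifolds (as in \cite{DGMS}) to the basic complex, using the transverse Hodge theory of El Kacimi-Alaoui \cite{EKA}. A transverse Kähler structure makes $\mathcal F$ a Riemannian foliation, so the transverse metric determines a basic Laplacian $\Delta_{d_B}=d_B d_B^\ast+d_B^\ast d_B$, where $d_B^\ast$ is the formal transverse adjoint, and this operator is transversely elliptic. First I would invoke El Kacimi-Alaoui's Hodge theory for basic forms: on a compact manifold carrying a homologically oriented Riemannian foliation, the space $\mathcal H^r_B(M)$ of basic harmonic $r$-forms is finite dimensional, the inclusion $\mathcal H^r_B(M)\hookrightarrow \Omega^r_B(M)$ induces an isomorphism $\mathcal H^r_B(M)\cong H^r_B(M)$, and there is an orthogonal Hodge decomposition
\[
\Omega^r_B(M)=\mathcal H^r_B(M)\oplus \operatorname{im} d_B \oplus \operatorname{im} d_B^\ast.
\]
The homological orientation hypothesis $H_B^{\operatorname{codim}\mathcal F}(M)\neq 0$ is exactly what guarantees the transverse Hodge star and the Poincaré duality needed for $d_B^\ast$ and for this decomposition to behave well.

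Next I would establish the transverse Kähler identities. Because the transverse Kähler form is basic, closed, and compatible with the transverse complex structure, the commutation relations between the transverse Lefschetz operator $L$, its adjoint $\Lambda$, and $\partial_B,\bar\partial_B$ are formal consequences of the linear algebra on the transverse normal bundle together with $d_B\omega=0$, and so hold verbatim on $\Omega^{*,*}_B(M)$. From these identities one deduces, exactly as in the compact Kähler case, the coincidence of the basic Laplacians
\[
\Delta_{d_B}=2\Delta_{\partial_B}=2\Delta_{\bar\partial_B}.
\]
In particular the three operators share the same space of harmonic basic forms, which is therefore bigraded, the associated Green operator $G$ commutes with $\partial_B,\bar\partial_B,\partial_B^\ast,\bar\partial_B^\ast$, and the Hodge decomposition is compatible with the bigrading of $\Omega^{*,*}_B(M)$.

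Finally I would run the formal \cite{DGMS} argument. Let $\alpha\in\Omega^{*,*}_B(M)$ satisfy $\partial_B\alpha=\bar\partial_B\alpha=0$ and $\alpha\in\operatorname{im} d$. Since $\alpha$ is $d_B$-exact its basic harmonic part vanishes; as $\bar\partial_B\alpha=0$, the $\bar\partial_B$-Hodge decomposition gives $\alpha=\bar\partial_B\eta$ with $\eta=\bar\partial_B^\ast G\alpha$. Decomposing $\eta$ by the $\partial_B$-Hodge decomposition, the $\partial_B$-harmonic part is annihilated by $\bar\partial_B$ (the Laplacians coincide), while the $\partial_B^\ast$-exact part is pushed, via the transverse Kähler identity $\partial_B^\ast\bar\partial_B=-\bar\partial_B\partial_B^\ast$, into $\operatorname{im}\partial_B^\ast$; since $\alpha$ minus a $\partial_B\bar\partial_B$-exact term lies in $\ker\partial_B$ and $\ker\partial_B\perp\operatorname{im}\partial_B^\ast$, that term must vanish, leaving $\alpha=\partial_B\bar\partial_B\gamma$. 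This proves
\[
\ker\partial_B\cap\ker\bar\partial_B\cap\operatorname{im} d\subset \operatorname{im}\partial_B\bar\partial_B,
\]
and the reverse inclusion is immediate.

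The main obstacle is the first step: unlike the classical case, $\Omega^*_B(M)$ is not the space of sections of a vector bundle over a compact manifold, and the basic Laplacian is only transversely elliptic, so the Hodge decomposition is not automatic. This is precisely where the Riemannian (bundle-like) structure supplied by the transverse Kähler metric and the homological orientation enter, and where the analytic input of \cite{EKA} is essential. Once the transverse Hodge decomposition and the transverse Kähler identities are secured, the derivation of the $\partial_B\bar\partial_B$-lemma follows the compact Kähler template without change.
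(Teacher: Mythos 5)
Your proposal is correct and follows exactly the route the paper intends: the paper gives no proof of this theorem, delegating it to the cited references, namely El Kacimi-Alaoui \cite{EKA} for the transverse Hodge theory (basic Hodge decomposition and transverse K\"ahler identities, with homological orientation ensuring the basic adjoints and harmonic theory behave as in the classical case) and \cite{DGMS} for the formal derivation of the $\partial_B\bar\partial_B$-lemma from those ingredients. Your reconstruction assembles precisely these two inputs in the standard way, so it matches the paper's (implicit) proof.
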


This together with Propositions \ref{prop:homori} and \ref{prop:quasiiso} implies the following result. 
\begin{thm}\label{theorem-model}

Assume that the central foliation
$\mathcal F$ admits a transversely K\"ahler structure.
Then the DGAs $\Omega^{*}(M)$ and   $H_{B}^{*}(M)\otimes \bigwedge W$ (resp.\ DBAs $\Omega^{* ,*}(M)$ and $H_{B}^{*,*}(M)\otimes \bigwedge (W^{1,0}\oplus W^{0,1})$) are quasi-isomorphic.
\end{thm}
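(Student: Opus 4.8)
The plan is to assemble the previously established quasi-isomorphisms into a single zig-zag, since by the definition of model given earlier it suffices to connect the two DGAs (resp.\ DBAs) by a finite chain of DGA (resp.\ DBA) homomorphisms each inducing an isomorphism on cohomology. The two substantial inputs are already in hand: the formality-type statement of Proposition \ref{prop:quasiiso}, whose only hypothesis is the $\partial_B\bar\partial_B$-lemma, and Corollaries \ref{cor:tormodel} and \ref{cor:dol}, which identify the twisted basic complexes $\Omega^*_B(M)\otimes\bigwedge W$ and $\Omega^{*,*}_B(M)\otimes\bigwedge(W^{1,0}\oplus W^{0,1})$ with the full de Rham and Dolbeault complexes up to quasi-isomorphism.

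First I would verify that the hypothesis of Proposition \ref{prop:quasiiso} is met. Since $M$ is a compact complex manifold it is in particular compact and oriented, so Proposition \ref{prop:homori} shows that $\mathcal F$ is homologically oriented. Combined with the assumed transverse K\"ahler structure, the preceding theorem (the transverse analogue of the $\partial\bar\partial$-lemma from \cite{DGMS} and \cite{EKA}) then guarantees that the $\partial_B\bar\partial_B$-lemma holds for $(\Omega^{*,*}_B(M),\partial_B,\bar\partial_B)$. This unlocks all four quasi-isomorphisms of Proposition \ref{prop:quasiiso}.

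With these in place I would write down the two zig-zags. For the de Rham statement, chaining the maps of Proposition \ref{prop:quasiiso} with Corollary \ref{cor:tormodel} yields
\[
H^*_B(M)\otimes\bigwedge W \;\longleftarrow\; \ker d^c\otimes\bigwedge W \;\longrightarrow\; \Omega^*_B(M)\otimes\bigwedge W \;\longrightarrow\; \Omega^*(M),
\]
in which every arrow is a quasi-isomorphism, so $H^*_B(M)\otimes\bigwedge W$ and $\Omega^*(M)$ are quasi-isomorphic as DGAs. For the Dolbeault statement, the same pattern using the last two maps of Proposition \ref{prop:quasiiso} together with Corollary \ref{cor:dol} gives
\[
H^{*,*}_B(M)\otimes\bigwedge(W^{1,0}\oplus W^{0,1}) \;\longleftarrow\; \ker\partial_B\otimes\bigwedge(W^{1,0}\oplus W^{0,1}) \;\longrightarrow\; \Omega^{*,*}_B(M)\otimes\bigwedge(W^{1,0}\oplus W^{0,1}) \;\longrightarrow\; \Omega^{*,*}(M),
\]
again a chain of quasi-isomorphisms, establishing the DBA claim.

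I expect the only genuine subtlety to lie in the middle of each chain rather than at the endpoints: the differential on $\ker\partial_B\otimes\bigwedge(W^{1,0}\oplus W^{0,1})$ in Proposition \ref{prop:quasiiso} is the perturbed operator $\bar\partial'$ rather than $\bar\partial$, and one must keep track of the fact that this perturbation---with $\bar\partial'-\bar\partial$ landing in $\partial_B$-exact forms---is precisely what the $\partial_B\bar\partial_B$-lemma controls, so that the intermediate maps genuinely remain DBA homomorphisms and quasi-isomorphisms. All the hard analysis (transverse Hodge theory giving the $\partial_B\bar\partial_B$-lemma, and the Hirsch-extension bookkeeping of Lemma \ref{hir} behind Proposition \ref{prop:quasiiso}) has already been carried out, so the proof itself is essentially the composition of arrows; the main care is to confirm the directions are compatible with the zig-zag definition and that no intermediate hypothesis has been overlooked.
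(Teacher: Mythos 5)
Your proposal is correct and takes essentially the same route as the paper: the paper's proof consists precisely of invoking Proposition \ref{prop:homori} (which applies because a compact complex manifold is compact and oriented), the transverse $\partial_{B}\bar\partial_{B}$-lemma of \cite{DGMS} and \cite{EKA}, and Proposition \ref{prop:quasiiso}, combined with Corollaries \ref{cor:tormodel} and \ref{cor:dol} exactly as in your two zig-zags. Your explicit write-up of the chains and of the role of the perturbed differential $\bar\partial'$ simply spells out what the paper leaves implicit in its one-line deduction.
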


\section{Mixed Hodge structures}
The purpose of this section is to show that the cohomology and minimal model of a complex manifold equipped with a special transverse K\"ahler structure on a central foliation admits a certain bigrading. We begin with basic notions and facts. 
\subsection{Mixed Hodge structures}
Let $V$ be  an $\R$-vector space.
An {\em$\R$-Hodge structure} of weight $n$ on an $\R$-vector space $V$ is a finite decreasing filtration $F^{\ast}$ on $V_\C = V \otimes \C$
such that 
\[F^{p}(V_{\C})\oplus \overline {F^{n+1-p}(V_{\C})}=V_{\C}
\]
for each $p$.
Equivalently, there exists a 
 finite bigrading 
\[V_{\C}=\bigoplus_{p+q=n}V_{p,q}
\]
such that
\[\overline{V_{p,q}}=V_{q,p}.
\]
An {\em$\R$-mixed-Hodge structure} on  $V$ is a pair $(W_{\ast},F^{\ast})$
such that:
\begin{enumerate}
\item $W_{\ast}$ is an increasing filtration which is bounded below,
\item $F^{\ast}$ is a decreasing filtration  on $V_{\C}$ such that
the filtration on $Gr_{n}^{W} V_{\C}$ induced by $F^{\ast}$ is an $\R$-Hodge structure of weight $n$.
\end{enumerate}
We call $W_{\ast}$ the {\em weight filtration} and $F^{\ast}$ the {\em Hodge filtration}.
If there exists a finite
 bigrading 
\[V_{\C}=\bigoplus V_{p,q} 
\]
satisfying 
\[\overline{V_{p,q}}=V_{q,p}, 
\]
 $W_{n}(V_{\C})=\bigoplus_{p+q\le n} V_{p,q} $
and $F_{r}(V_{\C})=\bigoplus_{p\ge r} V_{p,q} $ for any $n,p,q,r$,
then we say that  an $\R$-mixed-Hodge structure $(W_{\ast},F^{\ast})$ is $\R$-split.

Even if an $\R$-mixed-Hodge structure $(W_{\ast},F^{\ast})$ is not $\R$-split, we can obtain a canonical bigrading of $(W_{\ast},F^{\ast})$.

\begin{prop}\label{BIGG}{\rm (\cite[Proposition 1.9]{Mor})}
Let $(W_{\ast},F^{\ast})$ be an $\R$-mixed-Hodge structure on an $\R$-vector space $V$.
Define $V_{p,q}=R_{p,q}\cap L_{p,q}$ where
$R_{p,q}=W_{p+q}(V_{\C})\cap F^{p}(V_{\C})$ and
$L_{p,q}=W_{p+q}(V_{\C})\cap \overline{F^{q}(V_{\C})}+\sum_{i\ge 2} W_{p+q-i}(V_{\C})\cap \overline{F^{q-i+1}(V_{\C})}$.
Then we have 
  the   bigrading $V_{\C}=\bigoplus V_{p,q}$ such that
 $\overline{V_{p,q}}=V_{q,p}$ modulo $\bigoplus_{r+s<p+q} V_{r,s}$, 
$W_{n}(V_{\C})=\bigoplus_{p+q\le n}V_{p,q}$ and
$F^{r}(V_{\C})=\bigoplus_{p\ge r} V_{p,q}$.

\end{prop}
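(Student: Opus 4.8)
The plan is to recognize the displayed formula as the Deligne splitting of the mixed Hodge structure and to verify its three asserted properties. First I would unwind the hypothesis: since $(W_{\ast},F^{\ast})$ is an $\R$-mixed-Hodge structure, the filtration induced by $F^{\ast}$ on each graded piece $Gr_n^W V_{\C}$ is a pure $\R$-Hodge structure of weight $n$. Writing $\pi_n \colon W_n(V_{\C}) \to Gr_n^W V_{\C}$ for the projection, this means
\[
Gr_n^W V_{\C} = \bigoplus_{p+q=n}(Gr_n^W V_{\C})^{p,q}, \qquad (Gr_n^W V_{\C})^{p,q} = F^p Gr_n^W V_{\C} \cap \overline{F^q} Gr_n^W V_{\C},
\]
with $\overline{(Gr_n^W)^{p,q}} = (Gr_n^W)^{q,p}$. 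Reindexing the sum in $L_{p,q}$ by $j=i-1$ rewrites $V_{p,q}$ in the familiar Deligne form $V_{p,q} = F^p \cap W_{p+q} \cap \bigl(\overline{F^q}\cap W_{p+q} + \sum_{j\ge 1}\overline{F^{q-j}}\cap W_{p+q-1-j}\bigr)$, which in particular exhibits $V_{p,q}\subset F^p \cap W_{p+q}$.

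The core of the argument is a single lifting lemma, which I would prove by induction on the length of the weight filtration: for each $p,q$ the projection $\pi_{p+q}$ restricts to an isomorphism $V_{p,q} \xrightarrow{\sim} (Gr_{p+q}^W V_{\C})^{p,q}$, and in particular $V_{p,q}\cap W_{p+q-1} = 0$. The base case, where $W$ concentrates $V_{\C}$ in a single weight $n$, is immediate: the correction terms drop out and $V_{p,q}=F^p\cap\overline{F^q}$ is the pure Hodge decomposition. For the inductive step I would pass to the sub-mixed-Hodge structure $W_{m-1}(V_{\C})$ (with $m$ the top weight), compare the defining formula computed in $V_{\C}$ with the one computed in $W_{m-1}$, and use opposedness in weight $m$ together with the correction terms to produce, for each class in $(Gr_m^W)^{p,q}$, a canonical preimage lying in $V_{p,q}$; the role of the terms $\sum_{i\ge2}W_{p+q-i}\cap\overline{F^{q-i+1}}$ is precisely to absorb the weight-$(<p+q)$ ambiguity so that the lift stays inside $F^p$ and is well defined.

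Granting the lifting lemma, the three listed properties follow formally. Directness and $V_{\C}=\bigoplus_{p,q}V_{p,q}$ come from an induction on weight: $\sum_{p+q=n}V_{p,q}$ maps isomorphically onto $Gr_n^W V_{\C}$ and meets $W_{n-1}$ trivially, so $W_n = W_{n-1}\oplus\bigoplus_{p+q=n}V_{p,q}$, which upward-inducts to $W_n=\bigoplus_{p+q\le n}V_{p,q}$ and, at the top, to the total decomposition. The inclusion $\bigoplus_{p\ge r}V_{p,q}\subseteq F^r$ is built into the definition, and equality is obtained by the analogous graded comparison $F^r\cap W_n = \bigoplus_{p\ge r,\,p+q\le n}V_{p,q}$, using strictness of $F$ on each $Gr_n^W$; at the top this gives $F^r(V_{\C})=\bigoplus_{p\ge r}V_{p,q}$. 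For the conjugation statement, since $W$ is defined over $\R$ we have $\overline{W_k}=W_k$, so conjugating the formula yields $\overline{V_{p,q}}=\overline{F^p}\cap W_{p+q}\cap\bigl(F^q\cap W_{p+q}+\sum_{j\ge1}F^{q-j}\cap W_{p+q-1-j}\bigr)$; the correction terms have weight $<p+q$ and hence die under $\pi_{p+q}$, while $\pi_{p+q}(\overline{V_{p,q}})$ and $\pi_{p+q}(V_{q,p})$ both equal $(Gr_{p+q}^W)^{q,p}$ by $\overline{(Gr_{p+q}^W)^{p,q}}=(Gr_{p+q}^W)^{q,p}$, so $\overline{V_{p,q}}\equiv V_{q,p}$ modulo $W_{p+q-1}=\bigoplus_{r+s<p+q}V_{r,s}$.

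I expect the lifting lemma to be the main obstacle. The delicate point is that the Deligne splitting is exactly compatible with $W$ and $F$ but only compatible with $\overline{F}$ up to lower weight, and it is the asymmetric correction terms $\sum_{i\ge2}W_{p+q-i}\cap\overline{F^{q-i+1}}$ that encode this asymmetry. Showing that these terms produce a complement that is neither too large (which would destroy directness) nor too small (which would fail to surject onto the Hodge pieces of the graded) is the heart of the induction, and is the one place where the opposedness hypothesis on each $Gr_n^W$ is genuinely used.
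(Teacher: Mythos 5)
The paper offers no proof of this proposition at all: it is quoted, with label and attribution, from Morgan \cite[Proposition 1.9]{Mor}, and ultimately goes back to Deligne's splitting lemma (Th\'eorie de Hodge II, 1.2.8). So there is no internal argument to compare against; the relevant benchmark is the proof in the cited sources, and your proposal follows exactly that route. Your reindexing correctly identifies $V_{p,q}$ with the Deligne subspaces $I^{p,q}$; your key lemma --- that the projection $W_{p+q}(V_{\C})\to Gr^W_{p+q}V_{\C}$ restricts to an isomorphism $V_{p,q}\cong (Gr^W_{p+q}V_{\C})^{p,q}$, so in particular $V_{p,q}\cap W_{p+q-1}(V_{\C})=0$ --- is precisely the lemma on which Deligne's and Morgan's proofs rest; and your derivations of the three asserted properties from that lemma (induction up the weight filtration to get $W_n(V_{\C})=\bigoplus_{p+q\le n}V_{p,q}$, the graded comparison $F^r\cap W_n=\bigoplus_{p\ge r,\,p+q\le n}V_{p,q}$, and projection together with $\overline{(Gr^W_{p+q})^{p,q}}=(Gr^W_{p+q})^{q,p}$ for the conjugation statement modulo $W_{p+q-1}=\bigoplus_{r+s<p+q}V_{r,s}$) are all sound. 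The one shortfall is that the inductive step of the lifting lemma itself is described but not executed: you name the correct mechanism (the defining formula computed in the sub-mixed-Hodge structure $W_{m-1}(V_{\C})$ agrees with the one computed in $V_{\C}$ for $p+q\le m-1$; opposedness on $Gr^W_m$; the correction terms absorb the low-weight ambiguity), but the actual verification --- that an $F^p$-lift of a class in $(Gr^W_{p+q})^{p,q}$ can be adjusted, using the inductive decomposition of $W_{p+q-1}$ and the conjugation property modulo lower weight, so as to land in $L_{p,q}$ without leaving $F^p$ --- is asserted rather than carried out, and that is the only step of the argument with real content. Since this is exactly the computation performed in the cited references, your plan is correct and matches the source, but as written it is incomplete at its central point.
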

We say that the bigrading in this proposition is the {\em canonical bigrading} of  an $\R$-mixed-Hodge structure $(W_{\ast},F^{\ast})$.

We notice that this bigrading gives an equivalence of  the category of $\R$-mixed-Hodge structures on $V$ and bigradings $V_{\C}=\bigoplus V_{p,q}$ such that $\left(\bigoplus_{p+q\le i}V_{p,q}\right)\cap V$ is a real  structure of  $\bigoplus_{p+q\le i}V_{p,q}$ and $\overline{V_{p,q}}=V_{q,p}$ modulo $\bigoplus_{r+s<p+q} V_{r,s}$ (see \cite[Proposition 1.11]{Mor}).

\subsection{Morgan's Mixed Hodge diagrams}
In \cite{Del}, Deligne proves that  the real cohomology of a smooth algebraic variety over $\C$ admits a canonical $\R$-mixed-Hodge structure.
The following is Morgan's reformulation of Deligne's technique for studying the mixed Hodge theory  on Sullivan's minimal models.

\begin{defi}[{\cite[Definition 3.5]{Mor}}]
An $\R$-{\em mixed-Hodge diagram} is a pair of filtered $\R$-DGA $(A^{*}, W_{*})$ and bifiltered $\C$-DGA $(E^{*}, W_{*},F^{*})$ and filtered DGA map $\phi:(A^{*}_{\C},W_{*})\to (E^{*},W_{*})$  such that:
\begin{enumerate}
\item $\phi$  induces an isomorphism  $\phi^{*}:\,_{W}E^{*,*}_{1}(A^{*}_{\C})\to \,_{W}E^{*,*}_{1}(E^{*})$ where $ \,_{W}E_{*}^{*,*}(\cdot)$ is the spectral sequence for the decreasing filtration $W^{*}=W_{-*}$.
\item The differential $d_{0}$ on $\,_{W}E^{*,*}_{0}(E^{*})$ is strictly compatible with the filtration induced by $F$.
\item The filtration on $\,_{W}E_{1}^{p,q}(E^{*})$ induced by $F$ is an $\R$-Hodge structure of weight $q$ on $\phi^{*}(\,_{W}E^{*,*}_{1}(A^{*}))$.

\end{enumerate}
\end{defi}

Now, Deligne's $\R$-mixed Hodge structure is described by the following way.

\begin{thm}[{\cite[Theorem 4.3]{Mor}}]\label{midimi}
Let $\{(A^{*}, W_{*}), (E^{*}, W_{*},F^{*}),\phi\}$ be an $\R$-mixed-Hodge diagram.
Define the filtration $W^{\prime}_{*}$ on $H^{r}(A^{*})$ (resp.\ $H^{r}(E^{*}))$
as $W^{\prime}_{i}H^{r}(A^{*})=W_{i-r}(H^{r}(A^{*}))$ (resp.\ $W^{\prime}_{i}H^{r}(E^{*})=W_{i-r}(H^{r}(E^{*}))$).
Then the filtrations $W^{\prime}_{*}$ and $F^{*}$ on $H^{r}(E^{*})$ give an $\R$-mixed-Hodge  on 
$\phi^{*}(H^{r}(A^{*}))$.
\end{thm}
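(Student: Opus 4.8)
The plan is to follow Deligne's method for producing a mixed Hodge structure from a filtered complex, reorganized through the \emph{d\'ecalage} (shift) of the weight filtration. Recall that the d\'ecalage $\mathrm{Dec}\,W$ of a filtration $W$ on a complex $(E^{*},d)$ is given by $(\mathrm{Dec}\,W)_{p}E^{n}=\{x\in W_{p-n}E^{n}\mid dx\in W_{p-n-1}E^{n+1}\}$, and that it induces on cohomology precisely the shifted filtration $(\mathrm{Dec}\,W)_{i}H^{r}=W_{i-r}H^{r}$; thus the filtration $W'_{*}$ of the statement is exactly $\mathrm{Dec}\,W$ on $H^{r}(E^{*})$. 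Its key feature is that it advances the weight spectral sequence by one page, ${}_{\mathrm{Dec}\,W}E_{1}^{*,*}\cong {}_{W}E_{2}^{*,*}$ after the appropriate reindexing. Since $W'_{*}$ descends from the real filtration $W_{*}$ on $A^{*}$ through $\phi^{*}$, it is defined over $\R$, and by condition~(1) the map $\phi^{*}$ is an isomorphism, so that $\phi^{*}(H^{r}(A^{*}))$ is a real form of $H^{r}(E^{*})$. It therefore suffices to prove that $F^{*}$ induces a pure $\R$-Hodge structure of weight $n$ on each graded piece $Gr^{W'}_{n}H^{r}(E^{*})$.

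First I would study the weight spectral sequence ${}_{W}E_{*}^{*,*}(E^{*})$ together with the filtration induced by $F^{*}$. Condition~(2), the strict compatibility of the differential $d_{0}$ on ${}_{W}E_{0}^{*,*}(E^{*})$ with $F$, ensures that $F$ descends to a well-defined filtration on ${}_{W}E_{1}^{*,*}(E^{*})$, and condition~(3) asserts that this makes each ${}_{W}E_{1}^{p,q}(E^{*})$ a pure $\R$-Hodge structure of weight $q$, the real structure being supplied by $\phi^{*}$ via condition~(1).

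The heart of the argument is the degeneration of ${}_{W}E_{*}$ at the second page. The differentials $d_{r}\colon {}_{W}E_{r}^{p,q}\to {}_{W}E_{r}^{p+r,q-r+1}$ are morphisms of bigraded groups; tracking the index $q$, the differential $d_{1}$ preserves $q$ and is hence a morphism of pure Hodge structures of the same weight, so, morphisms of Hodge structures being strict, its cohomology ${}_{W}E_{2}^{p,q}$ is again pure of weight $q$. For $r\ge 2$, however, $d_{r}$ sends weight $q$ to weight $q-r+1\ne q$, and as there is no nonzero morphism between pure Hodge structures of distinct weights, every $d_{r}$ with $r\ge 2$ vanishes. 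Hence ${}_{W}E_{*}$ degenerates at $E_{2}$, equivalently ${}_{\mathrm{Dec}\,W}E_{*}$ degenerates at $E_{1}$.

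Finally I would assemble the mixed Hodge structure. The $E_{2}$-degeneration, together with the strictness from condition~(2), yields that $W$ is strictly compatible with the differential $d$ of $E^{*}$, so that $Gr^{W'}_{n}H^{r}(E^{*})$ is identified with the corresponding ${}_{W}E_{2}$-term, which we have shown carries a pure $\R$-Hodge structure of weight $n$ under the filtration induced by $F$. Transporting the real structure and weight filtration back from $A^{*}$ along $(\phi^{*})^{-1}$ then exhibits $(W'_{*},F^{*})$ as an $\R$-mixed Hodge structure on $\phi^{*}(H^{r}(A^{*}))$, whose canonical bigrading is the one furnished by Proposition~\ref{BIGG}. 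I expect the main obstacle to be the bookkeeping around the d\'ecalage and the two strictness assertions: one must verify that $F$ genuinely descends to the spectral sequence terms as a Hodge structure, so that the weight-shifting argument for the higher $d_{r}$ is legitimate, and that the reindexing of $\mathrm{Dec}\,W$ matches the shift $W'_{i}H^{r}=W_{i-r}H^{r}$ exactly, which is precisely where the degeneration page and the weight labels must be reconciled.
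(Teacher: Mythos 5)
The paper offers no proof of this statement at all: it is quoted verbatim as Morgan's Theorem 4.3 (\cite[Theorem 4.3]{Mor}), so the only ``paper proof'' to compare against is the one in the cited source. Your reconstruction is correct and is essentially that source's argument --- Deligne's d\'ecalage identification $W'=\mathrm{Dec}\,W$, purity of ${}_{W}E_{1}^{p,q}$ from conditions (2)--(3), strictness of $d_{1}$ as a morphism of Hodge structures, vanishing of $d_{r}$ for $r\ge 2$ by weight reasons, and $E_{2}$-degeneration giving the weight-$n$ Hodge structure on $Gr^{W'}_{n}$ --- with the only glossed point (that the direct and recursive $F$-filtrations on the spectral sequence terms agree, i.e.\ Deligne's two-filtrations lemma) being exactly the technical obstacle you yourself flag.
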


\begin{ex}\label{ex:mho}
Let $H^{*}$ be a graded commutative $\R$-algebra.
We suppose that for any $p,q $, $H^{p}$ admits an $\R$-Hodge structure $H^{p}\otimes \C=\bigoplus_{s+t=p} H^{s,t}$ of weight $p$ and  the multiplication $H^{p}\times H^{q}\to H^{p+q}$ is a morphism of Hodge structures.
Let $V$ be an $\R$-vector space with a linear map $\beta:V\to H^{2}$.
We suppose that $V$  admits an $\R$-Hodge structure $V\otimes \C=\bigoplus_{s+t=2} V^{s,t}$ of weight $2$  and 
 $\beta:V\to H^{2}$ is a morphism of Hodge structure. (e.g.\ $ \beta(V)\subset H^{1,1}$.)

Under  these assumptions, 
regarding $H^{*}$ as a DGA with trivial differential,
we consider the Hirsch extension $A^{*}=H^{*}\otimes \bigwedge V$.
Define the increasing filtration $W_{*}A^{*}$ as
\[W_{k}A^{q}=\bigoplus_{l\le k} H^{q-l}\otimes \bigwedge^{l} V
\]
and decreasing filtration $F^{*}A^{*}_{\C}$ as the Hodge filtration for the Hodge structure on $(H^{p}\otimes \C)\otimes \bigwedge^{q} (V\otimes \C)$.
Then
for any $p,q$, we have:
\begin{itemize}
\item $\,_{W}E^{-p,q}_{0}(A^{*}_{\C})=(H^{q-2p}\otimes \C)\otimes \bigwedge^{p} (V\otimes \C)$ and $d_{0}$ is trivial.
\item $\,_{W}E^{-p,q}_{1}(A^{*}_{\C})=(H^{q-2p}\otimes \C)\otimes \bigwedge^{p} (V\otimes \C)$ and clearly $F$ induces the Hodge structure of weight $q$.
\end{itemize}
Thus $\{(A^{*},W_{*}), (A^{*}_{\C},W_{*},F^{*}), \operatorname{id} :A^{*}_{\C}\to A^{*}_{\C}\}$ is an $\R$-mixed-Hodge diagram.

We can easily check that for  the canonical bigrading $H^{r}(A^{*}_{\C})=\bigoplus H^{r}_{p,q}$
 of the $\R$-mixed Hodge structure as in Theorem \ref{midimi}, we have
\begin{enumerate}
\item $H^{1}(A^{*}_{\C})=H^{1}_{1,0}\oplus H^{1}_{0,1}\oplus H_{1,1}^{1}$.
\item $H^{2}(A^{*}_{\C})=H^{2}_{2,0}\oplus H^{2}_{1,1}\oplus H_{0,2}^{2}\oplus H^{2}_{2,1}\oplus H^{2}_{1,2}\oplus H_{2,2}^{2}$.

\end{enumerate}
\end{ex}

Morgan's result on Sullivan's minimal models of $\R$-mixed-Hodge diagrams  is following.

\begin{thm}[{\cite[Section 6, 8]{Mor})}]\label{MMM11}
Let $\{(A^{*}, W_{*}), (E^{*}, W_{*},F^{*}),\phi\}$ be an $\R$-mixed-Hodge diagram.
Then the minimal model (resp.\ $1$-minimal model) $\mathcal M^{*}$  of the DGA $E^{*}$ with a quasi-isomorphism (resp.\ $1$-quasi-isomorphism) $\phi:\mathcal M^{*}\to E^{*}$ 
satisfies the following conditions:
\begin{itemize}
\item $\mathcal M^{*}$ admits a bigrading 
\[\mathcal M^{*}=\bigoplus_{p,q\ge0}\mathcal M^{*}_{p,q}
\]
such that $\mathcal M^{*}_{0,0}=\mathcal M^{0}=\C$ and 
the product and the differential are of type $(0,0)$.

\item For some real structure of $\mathcal M^{*}$,  the bigrading $\bigoplus_{p,q\ge0}\mathcal M^{*}_{p,q}$ induces an $\R$-mixed-Hodge structure.

\item 
Consider the canonical bigrading $H^{r}(E^{*})=\bigoplus V_{p,q}$ for the  $\R$-mixed-Hodge structure  as in Theorem \ref{midimi}.
Then  $\phi^{*}:H^{r}(\mathcal M^{*})\to H^{r}(E^{*})$ 
sends $H^{r}(\mathcal M^{*}_{p,q})$ to $V_{p,q}$.
\end{itemize}
\end{thm}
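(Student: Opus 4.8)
The plan is to construct the minimal model $\mathcal M^*$ of $E^*$ by Sullivan's inductive procedure through a tower of Hirsch extensions, and to carry along at each stage both a bigrading and a compatible real structure so that the weight and Hodge filtrations are respected throughout. By Theorem \ref{midimi} the cohomology $H^*(E^*)$ already carries an $\R$-mixed-Hodge structure --- with the shifted weight filtration $W'_*$ and the Hodge filtration $F^*$ --- and hence a canonical bigrading $H^r(E^*) = \bigoplus V_{p,q}$ by Proposition \ref{BIGG}. The real structure of $\mathcal M^*$ is to be supplied by building a minimal model of the $\R$-DGA $(A^*, W_*)$ simultaneously and comparing it to that of $E^*$ through $\phi$, exactly so that the correspondence between $\R$-mixed-Hodge structures and real bigradings recorded after Proposition \ref{BIGG} applies.

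First I would organize the induction. Build $\C = \mathcal M_0 \subset \mathcal M_1 \subset \cdots$ degree by degree, with DGA maps $\phi_n : \mathcal M_n \to E^*$ inducing isomorphisms on $H^{\le n}$ and an injection on $H^{n+1}$. Passing from $\mathcal M_n$ to $\mathcal M_{n+1}$ requires new generators in degree $n+1$ of two kinds: closed ones whose classes surject onto $\operatorname{coker}(H^{n+1}(\mathcal M_n)\to H^{n+1}(E^*))$, and ones with nontrivial differential chosen to kill $\ker(H^{n+2}(\mathcal M_n)\to H^{n+2}(E^*))$. I would strengthen the inductive hypothesis to carry a bigrading on $\mathcal M_n$ with product and differential of type $(0,0)$, a real structure inducing an $\R$-mixed-Hodge structure, and comparison maps $\phi_n$ that are morphisms of mixed-Hodge structures on cohomology.

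The technical heart --- and the main obstacle --- is the inductive step: one must equip each space of new generators with an $\R$-Hodge structure of the correct weight so that both its differential $d : V \to \mathcal M_n$ and the extension of the comparison map remain morphisms of mixed-Hodge structures. The decisive input is the strictness of morphisms of $\R$-mixed-Hodge structures with respect to both $W$ and $F$ (Deligne's two-filtration lemma). Strictness guarantees that $H^*(\mathcal M_n)$ inherits a mixed-Hodge structure for which $\phi_n^*$ is a morphism, that the relevant cokernel and kernel spaces inherit Hodge structures, and that these may be split off compatibly with the real structure. The weight bookkeeping is the delicate part: a generator representing a class in the weight-$w$ summand of $H^{n+1}(E^*)$ receives that weight, whereas a generator introduced to kill a relation sitting in $H^{n+2}$ must be given the weight that makes its differential weight-preserving, and it is precisely strictness that makes these assignments mutually consistent up the whole tower.

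Taking $\mathcal M^* = \bigcup_n \mathcal M_n$ with the induced bigrading $\mathcal M^* = \bigoplus_{p,q\ge 0}\mathcal M^*_{p,q}$ then yields the first two conclusions, the identity $\mathcal M^*_{0,0} = \mathcal M^0 = \C$ being forced because $\mathcal M^0 = \C$ and every generator in positive degree is assigned positive weight $p+q \ge 1$. The third conclusion --- that $\phi^* : H^r(\mathcal M^*)\to H^r(E^*)$ sends $H^r(\mathcal M^*_{p,q})$ to $V_{p,q}$ --- follows because $\phi^*$ is by construction a morphism of mixed-Hodge structures and therefore carries the canonical bigrading to the canonical bigrading. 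The assertion for the $1$-minimal model is the same argument truncated at degree $1$, producing a $1$-quasi-isomorphism.
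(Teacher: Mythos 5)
First, a point about the comparison you are being measured against: the paper does not prove this statement at all. It is quoted as Morgan's theorem, with the proof deferred to \cite[Sections 6, 8]{Mor}, so your proposal amounts to re-proving Morgan's result and must be judged against his argument. Measured that way, your outline has the right silhouette (an inductive construction of the minimal model with Hodge-theoretic bookkeeping carried up the tower, strictness as the engine), but it contains a genuine gap, and it is exactly the step you yourself flag as ``the technical heart.'' Your induction is driven by the mixed Hodge structure on $H^{*}(E^{*})$ supplied by Theorem \ref{midimi}, together with strictness of morphisms of $\R$-mixed Hodge structures. That input is not sufficient. The minimal model encodes Massey-product-level data, and to give the new generators weights, to make their differentials of type $(0,0)$, and to extend $\phi_{n}$ compatibly, one must choose cochain representatives and primitives \emph{inside} $E^{*}$ compatibly with both filtrations $W_{*}$ and $F^{*}$. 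This is precisely what the cochain-level conditions (1)--(3) in the definition of a mixed Hodge diagram provide, and it is why Morgan's proof goes through bigraded minimal models of the weight-filtration $E_{1}$-term (Section 6 of \cite{Mor}) and filtered minimal models (Section 7) before the bigrading on $\mathcal M^{*}$ is obtained in Section 8. Your invocation of ``Deligne's two-filtration lemma'' gestures at the right tool, but in your write-up strictness is applied only to maps of cohomology MHS, which does not by itself produce the required cochain-level splittings or make the weight assignments for the kernel-killing generators consistent.

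The second gap concerns the real structure. Your strengthened inductive hypothesis --- ``a real structure inducing an $\R$-mixed Hodge structure with product and differential of type $(0,0)$'' at every stage --- is stronger than what is true: the bigrading $\bigoplus \mathcal M^{*}_{p,q}$ is not defined over $\R$, and conjugation preserves $\mathcal M^{*}_{p,q}$ only modulo terms of lower total weight (this is the content of the correspondence recorded after Proposition \ref{BIGG}, i.e.\ \cite[Proposition 1.11]{Mor}). Producing the real structure is not an automatic byproduct of ``building a minimal model of $A^{*}$ simultaneously'': it requires comparing the complex bigraded model of $E^{*}$ with the complexified minimal model of $A^{*}$ through $\phi$ and through homotopies, which is a separate and delicate argument in \cite[Section 8]{Mor}. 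By contrast, your third bullet is unproblematic once the first two are established: Deligne's splitting is functorial for morphisms of mixed Hodge structures, so $\phi^{*}$ carries canonical bigrading to canonical bigrading, as you say.
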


\subsection{Mixed Hodge diagrams for transverse K\"ahler structures on central foliations}
Let  $M$ be a compact complex manifold.
We assume that $M$ admits a   transverse K\"ahler structure on a  central foliation ${\mathcal  F}_{H}$.
Let $(\Omega^{*,*}_{B}(M),\partial_{B},\bar\partial_{B})$ be the BBA of basic differential forms associated with $\mathcal{F}_H$. 
The \emph{basic Bott-Chern cohomology} $H^{\ast,\ast}_{B,BC}(M)$ is defined to be \[H^{\ast,\ast}_{B, BC}(M)=\frac{{\rm Ker}\,\partial_{B}\cap {\rm Ker}\,\bar\partial_{B}}{{\rm Im}\,\partial_{B}\bar\partial_{B}}.
\]
Then we have $\overline{H^{p,q}_{B, BC}(M)}=H^{q,p}_{B, BC}(M)$ and 
the natural algebra homomorphisms
\[{\rm Tot} ^{\ast}H^{\ast,\ast}_{B, BC}(M)\to H^{\ast}_{B}(M,\C)
\]
and
\[H^{\ast,\ast}_{B, BC}(M)\to H^{\ast,\ast}_{B}(M).
\]
By  $\partial_{B}\bar\partial_{B}$-Lemma, these maps  are isomorphisms (see \cite[Remark 5.16]{DGMS}).
Thus, we have the Hodge decomposition 
\[H^{r}_{B}(M,\C)=\bigoplus_{p+q=r}H^{p,q}_{B}(M)
\]
and 
\[\overline{H^{p,q}_{B}(M)}=H^{q,p}_{B}(M).
\]
We remark that  this decomposition  does not depend on the choice of a  transverse K\"ahler structure.

Under the assumptions as in Theorem \ref{theorem-model},
we consider the model ${\mathcal A}^{*}=H_{B}^{*}(M)\otimes \bigwedge W$  as in Theorem \ref{theorem-model}.
We suppose that  ${\mathcal  F}_{H}$ is fundamental as in Definition \ref{speci}.
we can obtain the mixed Hodge diagram $\{(A^{*},W_{*}), (A^{*}_{\C},W_{*},F^{*}), \operatorname{id} :A^{*}_{\C}\to A^{*}_{\C}\}$ as in Example \ref{ex:mho}.
Finally we obtain the following statement.

\begin{thm}\label{theorem-MHS}

Let  $M$ be a compact complex manifold.
We assume that $M$ admits a    transverse K\"ahler structure on a  fundamental central foliation ${\mathcal  F}_{H}$.
Consider the minimal model $\mathcal M$ (resp.\ $1$-minimal model) of $A^{*}_{\C}(M)$ with a quasi-isomorphism  (resp.\ $1$-quasi-isomorphism) $\phi:\mathcal M\to A_{\C}^{*}(M)$.
Then we have:
\begin{enumerate}
\item For each $r$,  the real de Rham cohomology $H^{r}(M,\R)$ admits an 
$\R$-mixed-Hodge structure such that
\begin{itemize}
\item $H^{1}(M,\C)=H^{1}_{1,0}\oplus H^{1}_{0,1}\oplus H_{1,1}^{1}$
\item $H^{2}(M,\C)=H^{2}_{2,0}\oplus H^{2}_{1,1}\oplus H_{0,2}^{2}\oplus H^{2}_{2,1}\oplus H^{2}_{1,2}\oplus H_{2,2}^{2}$
\end{itemize}
where $H^{r}(M,\C)=\bigoplus  H^{r}_{p,q}$ is the canonical bigrading.
\item 
$\mathcal M^{*}$ admits a bigrading 
\[\mathcal M^{*}=\bigoplus_{p,q\ge0}\mathcal M^{*}_{p,q}
\]
such that $\mathcal M^{*}_{0,0}=\mathcal M^{0}=\C$ and 
the product and the differential are of type $(0,0)$.

\item For some real structure of $\mathcal M^{*}$,  the bigrading $\bigoplus_{p,q\ge0}\mathcal M^{*}_{p,q}$ induces an $\R$-mixed-Hodge structure.
\item
The induced map $\phi^{*}:H^{r}(\mathcal M^{*})\to H^{r}(M,\C)$ 
sends $H^{r}(\mathcal M^{*}_{p,q})$ to $H^{r}_{p,q}$.

\end{enumerate}
\end{thm}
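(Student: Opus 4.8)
The plan is to recognize the data furnished by Theorem \ref{theorem-model} as a concrete instance of the abstract mixed Hodge diagram of Example \ref{ex:mho}, and then to feed that diagram into Morgan's machinery (Theorems \ref{midimi} and \ref{MMM11}). First I would invoke Theorem \ref{theorem-model} to replace $\Omega^*(M)$ by the quasi-isomorphic model $A^* = H^*_B(M)\otimes \bigwedge W$, so that $H^r(A^*)\cong H^r(M,\R)$ and the (resp.\ $1$-)minimal model of $A^*_\C$ is the object named in the theorem. Because $\mathcal{F}_H$ carries a transverse K\"ahler structure and, by Proposition \ref{prop:homori}, is homologically oriented, the $\partial_B\bar\partial_B$-lemma holds; hence $H^*_B(M)$ carries the Hodge decomposition $H^r_B(M,\C)=\bigoplus_{p+q=r}H^{p,q}_B(M)$ with $\overline{H^{p,q}_B(M)}=H^{q,p}_B(M)$, equipping each $H^p_B(M)$ with an $\R$-Hodge structure of weight $p$. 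Moreover the algebra isomorphism ${\rm Tot}^* H^{*,*}_{B,BC}(M)\to H^*_B(M,\C)$ shows that this decomposition is multiplicative, so the cup product is a morphism of Hodge structures.

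The second step is to put the correct Hodge structure on the generating space $W$ from Proposition \ref{prop:W}. Since $\mathcal{F}_H$ is fundamental, for every $w\in W$ the class $\beta(w):=[dw]_B$ lies in $H^{1,1}_B(M)$. I would therefore declare $W$ to carry the weight-$2$ Hodge structure of pure type $(1,1)$, that is $W^{1,1}=W\otimes\C$ and $W^{2,0}=W^{0,2}=0$; the conjugation symmetry $\overline{W^{1,1}}=W^{1,1}$ holds because $W$ is a real subspace. With this choice the map $\beta\colon W\to H^2_B(M)$ is automatically a morphism of weight-$2$ Hodge structures, its image being contained in $H^{1,1}_B(M)$. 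This is exactly the point at which the fundamental hypothesis is indispensable: if $[dw]_B$ had a component in $H^{2,0}_B(M)\oplus H^{0,2}_B(M)$, no pure $(1,1)$ structure on $W$ would make $\beta$ a morphism, and the clean decomposition of $H^1$ asserted in part (1) would break down.

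With these verifications in hand, $\{(A^*,W_*),(A^*_\C,W_*,F^*),\operatorname{id}\}$ is an $\R$-mixed-Hodge diagram by Example \ref{ex:mho}, with $W_kA^q=\bigoplus_{l\le k}H^{q-l}_B(M)\otimes\bigwedge^l W$ and $F^*$ the Hodge filtration of the tensor-product Hodge structure. Applying Theorem \ref{midimi} to this diagram endows each $H^r(A^*)\cong H^r(M,\R)$ with an $\R$-mixed-Hodge structure, and the explicit canonical bigrading computed in Example \ref{ex:mho} yields precisely the decompositions of $H^1(M,\C)$ and $H^2(M,\C)$ in part (1). Applying Theorem \ref{MMM11} to the same diagram, with $E^*=A^*_\C$, then produces the bigrading $\mathcal{M}^*=\bigoplus_{p,q\ge 0}\mathcal{M}^*_{p,q}$ on the (resp.\ $1$-)minimal model with product and differential of type $(0,0)$, a real structure for which the bigrading induces an $\R$-mixed-Hodge structure, and the compatibility $\phi^*(H^r(\mathcal{M}^*_{p,q}))\subset H^r_{p,q}$; these are parts (2), (3) and (4).

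The main obstacle is entirely concentrated in the second step, namely checking that the abstract hypotheses of Example \ref{ex:mho} are genuinely met by the geometric data: that the $\partial_B\bar\partial_B$-Hodge decomposition on $H^*_B(M)$ is multiplicative, so that $H^*_B(M)$ is a graded algebra with Hodge structures in the sense of Example \ref{ex:mho}, and that $\beta$ is a morphism of Hodge structures, which rests squarely on the fundamental condition. Once these two points are secured, the remainder is a formal invocation of Morgan's theory, since Example \ref{ex:mho}, Theorem \ref{midimi} and Theorem \ref{MMM11} have been arranged precisely to absorb it.
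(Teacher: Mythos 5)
Your proposal is correct and takes essentially the same route as the paper's proof: both obtain the multiplicative Hodge decomposition on $H^{*}_{B}(M)$ from the $\partial_{B}\bar\partial_{B}$-lemma (via the basic Bott--Chern isomorphisms), use the fundamental condition to equip $W$ with the pure $(1,1)$ weight-$2$ Hodge structure so that $\beta\colon W\to H^{2}_{B}(M)$ is a morphism of Hodge structures, and then feed the resulting diagram from Example \ref{ex:mho} into Theorems \ref{midimi} and \ref{MMM11}. Your write-up simply makes explicit several verifications (multiplicativity of the decomposition, the precise Hodge structure on $W$) that the paper leaves implicit.
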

In this theorem,
 for the $1$-minimal model $\mathcal M$  with a  $1$-quasi-isomorphism $\phi:\mathcal M\to A_{\C}^{*}(M)$,
we have:
\begin{itemize}
\item $H^{1}(\mathcal M^{*})=H^{1}(\mathcal M^{*}_{1,0})\oplus H^{1}(\mathcal M^{*}_{0,1})\oplus H^{1}(\mathcal M^{*}_{1,1})$
\item $H^{2}(\mathcal M^{*})=H^{2}(\mathcal M^{*}_{2,0})\oplus H^{2}(\mathcal M^{*}_{1,1})\oplus H^{2}(\mathcal M^{*}_{0,2})\oplus H^{2}(\mathcal M^{*}_{2,1})\oplus H^{2}(\mathcal M^{*}_{1,2})\oplus H^{2}(\mathcal M^{*}_{2,2})$.
\end{itemize}

By Theorem \ref{dHom}, we can translate this condition to certain condition on  the Lie algebra of the nilpotent completion of the fundamental group $\pi_{1}(M)$ as \cite[Theorem 9.4]{Mor} .
We obtain:
\begin{thm}
Let  $M$ be a compact complex manifold.
We assume that $M$ admits a   transverse K\"ahler structure on a fundamental central foliation ${\mathcal  F}_{H}$.
Then the the Lie algebra of the nilpotent completion of the fundamental group $\pi_{1}(M)$ is isomorphic to ${\mathcal F}(H)/{\mathcal I}$ such that
\begin{itemize}
\item $H$ is a $\C$-vector space with a bigrading $H=H_{-1,0}\oplus H_{0,-1}\oplus H_{-1,-1}$
\item ${\mathcal I}$ is a Homogeneous ideal of the free bigraded  Lie algebra generated by $H$  such that ${\mathcal I}$ has generators of types $(-1,-1)$, $(-1,-2)$, $(-2,-1)$ and $(-2,-2)$ only.
\end{itemize}
\end{thm}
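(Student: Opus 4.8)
The plan is to derive the structure of the nilpotent completion's Lie algebra by translating the previously established mixed-Hodge structure on the $1$-minimal model into its Koszul-dual description as a bigraded Lie algebra. The key bridge is Theorem~\ref{dHom}, which identifies the $1$-minimal model of $\Omega^*(M)$ with the dual of the Lie algebra $\mathfrak{g}$ of the nilpotent completion of $\pi_1(M)$. Since a $1$-minimal DGA is freely generated in degree $1$ with quadratic-type differential landing in $\bigwedge^2$ of the generators, its dual is precisely a quotient $\mathcal{F}(H)/\mathcal{I}$ of a free Lie algebra, where $H$ is the dual of the degree-$1$ generating space and $\mathcal{I}$ encodes the differential. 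This is the general mechanism behind \cite[Theorem~9.4]{Mor}, and the whole point is that the bigrading furnished by Theorem~\ref{theorem-MHS} transports across this duality.

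\textbf{First I would} fix the $1$-minimal model $\mathcal{M}^*$ together with its bigrading $\mathcal{M}^*=\bigoplus_{p,q\ge 0}\mathcal{M}^*_{p,q}$ from part (2) of Theorem~\ref{theorem-MHS}, under which the product and differential are of type $(0,0)$. Restricting to the generating space $V:=\mathcal{M}^1$ and dualizing, I would set $H:=V^\vee$ with the induced bigrading concentrated in total degree $-1$; the three summands $H_{-1,0}\oplus H_{0,-1}\oplus H_{-1,-1}$ arise as the duals of the pieces $\mathcal{M}^1_{1,0}$, $\mathcal{M}^1_{0,1}$, $\mathcal{M}^1_{1,1}$, which are exactly the three components appearing in the displayed decomposition of $H^1$ after the theorem statement. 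The sign convention (weight $-1$ rather than $+1$) reflects the standard grading reversal under Lie-algebra/DGA duality, and matches the conventions in \cite[Section~9]{Mor}.

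\textbf{Next I would} analyze the ideal $\mathcal{I}$. The differential $d\colon V\to \bigwedge^2 V$ of the $1$-minimal model dualizes to the Lie bracket $\bigwedge^2 H\to H$ together with the relations; concretely, $\mathcal{I}$ is the homogeneous ideal generated by the dual of $d$ restricted to the quadratic part, so its generators live in the bigraded pieces dual to $\mathcal{M}^2$. From part (1) of Theorem~\ref{theorem-MHS}, the admissible types of $H^2$ are $(2,0),(1,1),(0,2),(2,1),(1,2),(2,2)$; dualizing and negating, these produce generators of $\mathcal{I}$ of types $(-2,0),(-1,-1),(0,-2),(-2,-1),(-1,-2),(-2,-2)$. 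Because $d$ has type $(0,0)$ and is quadratic, each relation among brackets of two generators lies in one of these types; the types $(-2,0)$ and $(0,-2)$ are forced to contribute no genuinely new generators (they correspond to pieces $H^2_{2,0}, H^2_{0,2}$ already captured by antisymmetrized products of degree-$1$ classes via the $\R$-Hodge structure), leaving precisely the four advertised types $(-1,-1),(-1,-2),(-2,-1),(-2,-2)$. Since the bigrading on $\mathcal{M}^*$ is compatible with products and $d$, the ideal $\mathcal{I}$ is automatically homogeneous, and the isomorphism $\mathfrak{g}\cong \mathcal{F}(H)/\mathcal{I}$ is an isomorphism of bigraded Lie algebras.

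\textbf{The hard part will be} verifying cleanly that the types $(-2,0)$ and $(0,-2)$ do not yield independent generators of $\mathcal{I}$ beyond those already accounted for by the free bracket of degree-$1$ elements — in other words, that the purely holomorphic and purely antiholomorphic weight-$2$ relations are consequences of the Lie structure on $H_{-1,0}\oplus H_{0,-1}$ rather than extra imposed relations. This is where the genuine input of the mixed-Hodge bigrading (as opposed to a mere grading by weight) is used, and it parallels the delicate bookkeeping in Morgan's proof of \cite[Theorem~9.4]{Mor}. Once this is settled, the remaining verifications are formal: homogeneity of $\mathcal{I}$ follows from type-$(0,0)$ compatibility, and the bigrading on $H$ is read off directly from the $H^1$-decomposition. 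I expect to invoke \cite[Theorem~9.4]{Mor} directly for the technical heart and to supply only the identification of the relevant bigraded pieces from Theorem~\ref{theorem-MHS}.
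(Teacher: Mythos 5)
Your high-level route is the same as the paper's: the paper also takes the bigraded $1$-minimal model supplied by Theorem \ref{theorem-MHS}, dualizes it via Theorem \ref{dHom}, and then simply invokes \cite[Theorem 9.4]{Mor} for the translation into a presentation of the Malcev Lie algebra -- the paper offers essentially no detail beyond this citation. The trouble is that the two places where you try to say more than the citation are both incorrect. First, the generating space of the presentation is not $H=(\mathcal M^{1})^{\vee}$. Since a $1$-minimal model is generated entirely in degree one, $\mathcal M^{1}$ is the underlying vector space of the whole pro-nilpotent Lie algebra $L$, and its bigraded pieces are not confined to the types $(1,0),(0,1),(1,1)$: already the second-stage generators sit inside $\bigwedge^{2}$ of the first-stage ones and can have types such as $(2,0)$ or $(2,2)$. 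What carries exactly the three advertised types is $H^{1}(\mathcal M^{*})\cong H^{1}(M,\C)$, i.e.\ the dual of the abelianization $L/[L,L]$; likewise the minimal space of ideal generators is dual to $H^{2}(\mathcal M^{*})$ (which injects into $H^{2}(M,\C)$), not to $\mathcal M^{2}$.

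Second, and more seriously, your mechanism for discarding the types $(-2,0)$ and $(0,-2)$ is backwards. The quadratic part of the relation ideal is dual to the \emph{image} of the cup product $\bigwedge^{2}H^{1}\to H^{2}$ (and the higher part to Massey-type products), so a class of type $(2,0)$ that is ``captured by antisymmetrized products of degree-one classes'' \emph{produces} a generator of $\mathcal I$ of type $(-2,0)$; it is the classes \emph{not} hit by such products that impose no relation. This is not a fixable slip of wording: take $\h=0$ (a compact K\"ahler manifold satisfies the hypotheses vacuously), or, for a nontrivial foliation, an abelian variety times a Hopf manifold with its canonical foliation. There, products of classes in $H^{1,0}_{B}$ give nonzero $(2,0)$-classes in $H^{2}$ lying in the image of the cup product, so with the Hodge-theoretic bigrading on $(H^{1})^{\vee}$ that you propose to use, the ideal genuinely requires generators of type $(-2,0)$; no bookkeeping of the kind you sketch can remove them. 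The four-type conclusion is an existence statement about \emph{some} bigraded presentation, not one compatible with the canonical bigrading of $H^{1}$ and $H^{2}$, and extracting it is precisely the content of Morgan's argument, which the paper (unlike your proposal) does not attempt to reproduce but quotes outright. In short: where your proposal coincides with the paper it is a citation of \cite[Theorem 9.4]{Mor}, and where it goes beyond that citation it would fail.
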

As a consequence,    the Lie algebra of the nilpotent completion of the fundamental group $\pi_{1}(M)$ is determined by $\pi_{1}(M)/\Gamma_{5}$ where $\Gamma_{5}$ is the 5th-term of the lower central series of  $\pi_{1}(M)$ (\cite[Corollary 9.5]{Mor}).
Thus, we can say that  not every finitely generated group can be the fundamental  group of a compact complex  manifold with    transverse K\"ahler structure on a fundamental  central foliation.

\section{Examples and applications}
\subsection{Simple examples}
\begin{ex}\label{S1n}
Consider the product $S^{1,2n-1}=S^{1}\times S^{2n-1}$ of a circle and  a $(2n-1)$-dimensional sphere 
equipped with a complex structure so that there exists a  special  transverse K\"ahler structure on a $1$-dimensional central foliation ${\mathcal  F}_{H}$.
Then, by our results, $\Omega^{*}(S^{1,2n-1})$ is quasi-isomorphic to the DGA $ A^{*}=H_{B}^{*}(S^{1,2n-1})\otimes \bigwedge W$.
By $\dim H^{1}(S^{1,2n-1})=1$ and $H^{1}(S^{1,2n-1},\C)=H^{1,0}_{B}(S^{1,2n-1})\oplus H^{0,1}_{B}(S^{1,2n-1})\oplus \ker d\vert _{W}$,
we have $H^{1,0}_{B}(S^{1,2n-1})\oplus H^{0,1}_{B}(S^{1,2n-1})=0$ and $\dim \ker d\vert _{W}=1$.
By $\dim H^{2}(S^{1,2n-1})=0$, the differential $d:W\to H^{2}_{B}(S^{1,2n-1})$ is surjective and hence $\dim H^{2}_{B}(S^{1,2n-1})=1$. 
Take $W=\langle x,y\rangle $ so that $dx\not=0$ in $H^{2}_{B}(S^{1,2n-1})$ and $dy=0$.
We have $H^{2}_{B}(S^{1,2n-1})=\langle dx\rangle$.
Since $ dx\in H^{2}_{B}(S^{1,2n-1})$ must contain transverse K\"ahler form, we have $(dx)^{i}\not=0$ for any $i\le n-1$.
Inductively we can easily compute $ H^{2i}_{B}(S^{1,2n-1})=\langle (dx)^{i}\rangle$ and $ H^{2i-1}_{B}(S^{1,2n-1})=0$ for $2\le i\le n-1$.

Consider the Hodge decomposition 
\[H^{r}_{B}(S^{1,2n-1},\C)=\bigoplus_{p+q=r}H^{p,q}_{B}(S^{1,2n-1}).
\]
Then we have $H^{i,i}_{B}(S^{1,2n-1})=\langle (dx)^{i}\rangle$
 for any $i\le n-1$ and $H^{p,q}_{B}(S^{1,2n-1}=0$ for $p\not=q$.
 Take the decomposition $W\otimes \C=W^{1,0}\oplus W^{0,1}$
with $W^{1,0}=\langle z\rangle$.
Then we have $dz=cdx $ for some $c\in\C$.
Thus we have $\bar\partial z= cdx $ and $\bar\partial \bar z= 0 $.
Hence $\Omega^{* ,*}(S^{1,2n-1})$  is quasi-isomorphic to the DBA
\[B^{*,*}=\langle 1, dx,\dots, (dx)^{n-1}  \rangle \otimes \bigwedge \langle z, \bar z\rangle.
\]
Thus every complex structure on $S^{1,2n-1}$ with a   transverse K\"ahler structure on a $1$-dimensional fundamental central foliation ${\mathcal  F}_{H}$ has same basic Betti, basic Hodge and Hodge numbers.
There are many such complex structures, see Example \ref{hop}.
\end{ex}

\begin{ex}
Consider the product $S^{3,3}=S^{3}\times S^{3}$ of two three dimensional spheres
equipped with a complex structure so that there exists a    transverse K\"ahler structure on a $1$-dimensional central foliation ${\mathcal  F}_{H}$.
Then, by our results, $\Omega^{*}(S^{3,3})$ is quasi-isomorphic to the DGA $ A^{*}=H_{B}^{*}(S^{3,3})\otimes \bigwedge W$.
By $H^{1}(S^{3,3})=0$ and $H^{2}(S^{3,3})=0$,
 we have $H^{1}_{B}(S^{3,3})=0$ and the differential $d:W\to H^{2}_{B}(S^{3,3})$ is bijective.
Take $W=\langle x,y\rangle$.
Then $H^{2}_{B}(S^{3,3})=\langle dx, dy\rangle$.
By $\dim H^{3}(S^{3,3})=2$, just two of the elements
\[d(x\wedge dx)=dx\wedge dx, d(y\wedge dy)=dy\wedge dy, d(x\wedge dy)=-d(y\wedge dx)=dx\wedge dy 
\]
are equal to $0$. 
Take $x,y$ so that $dx\wedge dx=dy\wedge dy=0$ and $dx\wedge dy\not=0$.
Since the codimension of ${\mathcal  F}_{H}$ is $4$,
we have $\dim H^{4}_{B}(S^{3,3})=1$ and thus $H^{4}_{B}(S^{3,3})= \langle dx\wedge dy\rangle $.
Thus we have 
\[H_{B}^{*}(S^{3,3})=\bigwedge \langle dx, dy \rangle=  \langle 1,  dx, dy , dx\wedge dy\rangle.\]

Consider the Hodge decomposition 
\[H^{r}_{B}(S^{3,3},\C)=\bigoplus_{p+q=r}H^{p,q}_{B}(S^{3,3}).
\]
Then, by $H^{1,1}_{B}(S^{3,3})\not=0$ and $\dim H^{2}_{B}(S^{3,3})=2$,
we have that $H^{2,0}_{B}(S^{3,3})=H^{0,2}_{B}(S^{3,3})=0$.
Thus $H^{1,1}_{B}(S^{3,3})=\C\langle dx, dy\rangle$.
 Take the decomposition $W\otimes \C=W^{1,0}\oplus W^{0,1}$
with $W^{1,0}=\langle \alpha+\sqrt{-1}\beta\rangle$.
Now we have
\[\bar\partial(\alpha+\sqrt{-1}\beta)=d\alpha+\sqrt{-1}d\beta.
\]
and
\[\bar\partial(\alpha-\sqrt{-1}\beta)=0.\]
By $\langle x, y\rangle=\langle \alpha, \beta\rangle$,
we have 
\[d\alpha\wedge d\beta\not=0\in H^{4}_{B}(S^{3,3},\C)=H^{2,2}_{B}(S^{3,3}).
\]
Hence $\Omega^{* ,*}(S^{3,3})$  is quasi-isomorphic to the DBA
\[B^{*,*}=\langle 1, d\alpha, d\beta, \alpha\wedge d\beta\rangle \otimes \bigwedge \langle \alpha+\sqrt{-1}\beta, \alpha-\sqrt{-1}\beta\rangle.
\]
We compute
\[H^{1,0}(S^{3,3})=H^{2,0}(S^{3,3})=H^{3,0}(S^{3,3})=H^{0,2}(S^{3,3})=H^{0,3}(S^{3,3})=0
\]
and
\[\dim H^{0,1}(S^{3,3})= \dim H^{2,1}(S^{3,3})=\dim H^{1,2}(S^{3,3})=1.
\]
Thus every complex structure on $S^{3,3}$ with a   transverse K\"ahler structure on a $1$-dimensional  central foliation ${\mathcal  F}_{H}$ has same basic Betti, basic Hodge and Hodge numbers.
Such complex manifolds are constructed as LVM manifolds associated with complex numbers $(\lambda_{1},\dots,\lambda_{5}) $ with certain conditions (see \cite[Section 5]{MV}).
\end{ex}
\begin{ex}
	Consider the product $S^{1,3} = S^1 \times S^3$ (resp.~$S^{3,3} = S^3 \times S^3$) equipped with a complex structure so that there exists a  transverse K\"ahler structure on a $1$-dimensional central foliation $\mathcal{F}_{H_1}$ (resp.~$\mathcal{F}_{H_2}$). Then the product $S^{1,3} \times S^{1,3}$ has the natural complex structure so that there exists a special transverse K\"ahler structure on a $2$-dimensional central foliation $\mathcal{F}_{H_1 \times H_1}$. The K\"unneth formula allows us to compute the basic Betti, basic Hodge and Hodge numbers. By K\"unneth formula we have 
	\begin{equation*}
		\dim H^i_B (S^{1,3} \times S^{1,3}) =
		\begin{cases}
			1 & i =0, 4,\\
			2 & i =2,\\
			0 & \text{otherwise}, 
		\end{cases}
	\end{equation*}
	\begin{equation*}
		\dim H^{p,q}_B (S^{1,3} \times S^{1,3}) =
		\begin{cases}
			1 & p =q = 0,2,\\
			2 & p =q=1,\\
			0 & \text{otherwise} 
		\end{cases}
	\end{equation*}
	and 
	\begin{equation*}
		\dim H^{p,q} (S^{1,3} \times S^{1,3}) =
		\begin{cases}
			1 & (p,q) =(0,0),(4,4),(0,2),(4,2),\\
			2 & (p,q) = (0,1), (4,3), (1,2), (3,2),\\
			4 & (p,q) = (2,2), \\
			0 & \text{otherwise}. 
		\end{cases}
	\end{equation*}
	Now we consider the complex $1$-dimensional torus $S^{1,1} = S^1 \times S^1$ and the central foliation $\mathcal{F}_{S^{1,1}}$ on $S^{1,1}$. Then the product $S^{1,1} \times S^{3,3}$ has the natural complex structure so that there exists a  transverse K\"ahler structure on a $2$-dimensional central foliation $\mathcal{F}_{S^{1,1} \times H_2}$. By K\"unneth formula we have 
	\begin{equation*}
		\begin{split}
		\dim H^i_B (S^{1,1} \times S^{3,3}) &=
		\begin{cases}
			1 & i =0, 4,\\
			2 & i =2,\\
			0 & \text{otherwise}
		\end{cases}
		\\
		&= \dim H^i_B (S^{1,3} \times S^{1,3}), 
		\end{split}
	\end{equation*}
	\begin{equation*}
		\begin{split}
		\dim H^{p,q}_B (S^{1,1} \times S^{3,3}) &=
		\begin{cases}
			1 & p =q = 0,2,\\
			2 & p =q=1,\\
			0 & \text{otherwise} 
		\end{cases}\\
		&= \dim H^{p,q}_B (S^{1,3} \times S^{1,3})
		\end{split}
	\end{equation*}
	but
	\begin{equation*}
		\dim H^{p,q} (S^{1,1} \times S^{3,3}) \neq \dim H^{p,q}(S^{1,3} \times S^{1,3})
	\end{equation*}
	for some $p,q$. 
	Indeed, $\dim H^{1,0} (S^{1,1} \times S^{3,3}) = 1$ but $ \dim H^{1,0}(S^{1,3} \times S^{1,3}) = 0$. Thus, in general, the Hodge numbers depend on a complex structure.
\end{ex}

\subsection{Nilmanifolds}
Let $N$ be a simply connected nilpotent Lie group.
We suppose that $N$ admits a lattice $\Gamma$ i.e.\ cocompact discrete subgroup.
A compact homogeneous space $\Gamma\backslash N$ is called a {\em nilmanifold}.
It is known that a nilmanifold admits a K\"ahler structure if and only if it is a torus (see \cite{BG,hasegawa}).

Denote by $\frak n$  the Lie algebra of $N$.
Let $J$ be  an endomorphism  of $\frak n$ satisfying $J\circ J=-\operatorname{id}$ and $[JA, JB]=[A,B]$ for any $A, B\in \frak n$.
Then $J$ induces a complex structure on $\Gamma\backslash N$.
Such complex structure is called {\em abelian}. 
We assume that $\frak n$ is non-abelian and $2$-step i.e. $[\frak n,[\frak n,\frak n]]=0$.
Let $C$ be the center of $N$ and $\psi:N\to N/C$ the quotient map.
Then we have the holomorphic principal torus bundle
\[T\hookrightarrow \Gamma\backslash N\to M
\]
where $T$ and $M$ are complex tori $ \Gamma\cap C\backslash C$ and  $M=\psi(\Gamma)\backslash \psi(N)$ respectively.
Let $\frak c$ be the sub-algebra of $\frak n$ corresponding to $C$.
Consider the complex $\bigwedge {\frak n}^{*}$ of left-$N$-invariant differential forms.
Take $W\subset \bigwedge^1 {\frak n}^{*}$ which is dual to $\frak c$.
Then we have $dW\subset \Omega^{1,1}(\Gamma\backslash N)$.
Thus,  in this case, $\Gamma\backslash N$ admits  a   transverse  K\"ahler structure on the fundamental  central foliation $\mathcal F_{C}$.

We study the properties of nilamnifolds admitting special  transverse K\"ahler structures  on fundamental  central foliations

\begin{prop}
Let $\Gamma\backslash N$ be a nilmanifold with a (not necessarily left-invariant) complex structure $J$.
We assume that $M$ admits a   transverse K\"ahler structure on a $k$-dimensional central foliation ${\mathcal F}_{H}$.
Suppose  that  ${\mathcal F}_{H}$ is regular i.e.\ $H$ is a compact and the $H$-action is free.
Then  $\Gamma\backslash N$ is biholomorphic to a holomorphic principal torus bundle over a complex torus.
In particular,  $\Gamma\backslash N$ is $2$-step nilmanifold (see \cite{PS}).
\end{prop}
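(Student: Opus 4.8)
The plan is to show that the base $B:=(\Gamma\backslash N)/H$ of the foliation is a complex torus, after which the conclusion is immediate. First, since $H$ is a compact complex torus acting freely and holomorphically, the regular central foliation $\mathcal F_H$ is precisely the fibration of a holomorphic principal $H$-bundle $\pi\colon\Gamma\backslash N\to B$, as recalled in the introduction; here $B$ is a compact complex manifold and $H$ is a real torus of dimension $2k$. A transverse K\"ahler form is basic, closed, of type $(1,1)$ and transversely positive, so it is the pullback $\pi^{*}\omega_{B}$ of a genuine K\"ahler form $\omega_{B}$ on $B$. Thus $B$ is a compact K\"ahler manifold, and it remains to identify it.

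Next I would prove that the fibre of $\pi$ is $\pi_{1}$-injective, equivalently that $B$ is aspherical; I expect this to be the main point of the argument. The total space $\Gamma\backslash N$ is aspherical, its universal cover $N$ being diffeomorphic to a Euclidean space. Feeding this into the long exact homotopy sequence of $T^{2k}\to\Gamma\backslash N\to B$ gives $\pi_{i}(B)=0$ for $i\ge 3$ and an isomorphism $\pi_{2}(B)\cong\Lambda$, where $\Lambda:=\ker\bigl(\pi_{1}(T^{2k})\to\pi_{1}(\Gamma\backslash N)=\Gamma\bigr)$ is a subgroup of $\Z^{2k}$, hence free abelian. If $\Lambda\neq 0$, then the universal cover $\widetilde B$ would satisfy $\pi_{1}(\widetilde B)=0$, $\pi_{2}(\widetilde B)\cong\Lambda$ and $\pi_{i}(\widetilde B)=0$ for $i\ge 3$, so it would be a $K(\Lambda,2)$, homotopy equivalent to a nontrivial product of copies of $\C P^{\infty}$. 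Such a space has nonzero rational cohomology in arbitrarily high degrees, which is impossible since $\widetilde B$ is a manifold of real dimension $2\dim_{\C}B$ and so has vanishing rational cohomology above that degree. Therefore $\Lambda=0$: the map $\pi_{1}(T^{2k})\to\Gamma$ is injective with central image $A\cong\Z^{2k}$, and $B$ is aspherical with $\pi_{1}(B)\cong\Gamma/A$.

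Finally, $\Gamma/A$ is finitely generated and nilpotent, and it is torsion free, being the fundamental group of the closed aspherical manifold $B$; by Malcev's theorem $B$ is then homotopy equivalent to a nilmanifold $Y$ with $\pi_{1}(Y)\cong\Gamma/A$. Since $B$ is compact K\"ahler it is formal (\cite{DGMS}), and formality depends only on the real homotopy type, so $Y$ is formal; a nilmanifold is formal only if it is a torus (\cite{BG},\cite{hasegawa}). Hence $\Gamma/A\cong\Z^{2m}$ with $2m=\dim_{\R}B$, so that $b_{1}(B)=2m=2\dim_{\C}B$. A compact K\"ahler manifold whose first Betti number equals twice its complex dimension is a complex torus, the Albanese map $B\to\operatorname{Alb}(B)$ being a biholomorphism. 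Consequently $\pi\colon\Gamma\backslash N\to B$ is a holomorphic principal torus bundle over a complex torus, and by \cite{PS} the total space is a $2$-step nilmanifold; equivalently, the central extension $1\to\Z^{2k}\to\Gamma\to\Z^{2m}\to 1$ forces $[\Gamma,\Gamma]\subseteq A\subseteq Z(\Gamma)$.
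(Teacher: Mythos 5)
Your overall route coincides with the paper's: pass to the holomorphic principal $H$-bundle $\pi\colon\Gamma\backslash N\to B$, note that the transverse K\"ahler form descends to a K\"ahler form on $B$, show that $B$ is a compact aspherical K\"ahler manifold with finitely generated nilpotent fundamental group, and identify $B$ with a complex torus. The difference is that the paper compresses the last two steps into citations of \cite{hasegawa}, \cite{BG} and \cite{BC}, while you argue them by hand. Two of your hand-made arguments are correct and genuinely fill in what the paper leaves implicit: the asphericity of $B$ (via the homotopy exact sequence and the $K(\Lambda,2)$ obstruction, ruling out a kernel of $\pi_1(T^{2k})\to\Gamma$), and the identification $\pi_1(B)\cong\Z^{2m}$ via Malcev rigidity together with formality of compact K\"ahler manifolds \cite{DGMS} and the Hasegawa--Benson--Gordon theorem.

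The final step, however, has a genuine gap: the assertion that ``a compact K\"ahler manifold whose first Betti number equals twice its complex dimension is a complex torus, the Albanese map being a biholomorphism'' is false as stated. Blow up a $2$-dimensional complex torus at a point: the result is compact K\"ahler with $b_1=4=2\dim_\C$, yet its Albanese map is the blow-down, not a biholomorphism. What rescues you is information you have already proved but do not invoke at this point, namely that $B$ is aspherical with $\pi_1(B)\cong\Z^{2m}$, hence homotopy equivalent to a real torus. Then $H^*(B;\Q)$ is the exterior algebra on $H^1(B;\Q)$, so the Albanese map $\alpha\colon B\to\operatorname{Alb}(B)$, which is an isomorphism on $H^1$, induces an isomorphism on \emph{all} rational cohomology (and, by Poincar\'e duality, on homology). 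In particular $\alpha$ has degree one and cannot contract any compact positive-dimensional analytic subset $Z\subset B$, since the class of a curve in $Z$ is nonzero (it has positive K\"ahler area) but would be killed by $\alpha_*$; hence $\alpha$ is finite of degree one onto a smooth manifold and therefore a biholomorphism. This is exactly Catanese's theorem that a compact K\"ahler manifold homotopy equivalent to a torus is a complex torus, and it is also what the paper's cited reference \cite{BC} (aspherical compact K\"ahler manifolds with solvable fundamental group) provides. Replace your Betti-number claim by this argument, or by an appeal to \cite{BC}, and the proof is complete; the concluding reduction to a $2$-step nilmanifold via \cite{PS} is fine.
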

\begin{proof}
By the assumption,  $\Gamma\backslash N$ admits a holomorphic principal torus $H$ bundle structure $\Gamma\backslash N\to  B$ so that the base space is a compact K\"ahler manifold.
Since $\Gamma\backslash N$ is an aspherical manifold with $\pi_{1}(\Gamma\backslash N)\cong \Gamma$, $B$ is a compact aspherical manifold such that  $\pi_{1}(B)$ is a finitely generated nilpotent group.
By results in \cite{hasegawa}, \cite{BG} and \cite{BC}, $B$ is a complex torus.
Thus  $\Gamma\backslash N$ is  a  holomorphic principal torus bundle over a complex torus.
\end{proof}

We are interested in the non-regular case.

\begin{prop}
Let $\Gamma\backslash N$ be a nilmanifold with a (not necessarily left-invariant) complex structure $J$.
We assume that $\Gamma\backslash N$ admits a  transverse K\"ahler structure on a fundamental  central foliation ${\mathcal F}_{H}$.
If $H$ is complex $1$-dimensional, then $\Gamma\backslash N$ is diffeomorphic to a $2$-step nilmanifold. 

\end{prop}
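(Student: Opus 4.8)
The plan is to translate the assertion into a statement about the Malcev (nilpotent) Lie algebra $\n$ of $\pi_1(\Gamma\backslash N)=\Gamma$ and to read that statement off the explicit model of the de Rham complex. Write $M=\Gamma\backslash N$. Since a nilmanifold is aspherical, its diffeomorphism type is determined by $\Gamma$, and by Malcev rigidity this is in turn determined by $\n$; thus $M$ is diffeomorphic to a $2$-step nilmanifold if and only if $\n$ is at most $2$-step, i.e.\ $[\n,[\n,\n]]=0$. By Theorem \ref{dHom} the Lie algebra $\n$ is dual to the $1$-minimal model of $\Omega^*(M)$, which by Theorem \ref{theorem-model} may be computed from $A^*=H^*_B(M)\otimes\bigwedge W$. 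As $H$ is complex $1$-dimensional we have $k=1$, so $W$ is real $2$-dimensional (complex $1$-dimensional) and is dual to $\h$ via $(v,w)\mapsto i_{X_v}w$.

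First I would locate inside $\n$ the $2$-dimensional subspace $\h$ dual to $W$ and show it is a central ideal. The leaves of $\mathcal F_H$ are the orbits of the connected group $H$, which is central in $G_M$ by Lemma \ref{lemmaonH}; hence the classes carried by the leaves have trivial monodromy and the image of the leafwise fundamental group lies in the centre of $\Gamma$. Passing to Malcev Lie algebras, $\h$ becomes a central ideal of $\n$. Because $\dim_\C H=1$ the fibre is a single complex torus, so $\h$ is abelian and carries no internal brackets, and nothing beyond centrality must be verified here.

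Next I would study the quotient $\frak q=\n/\h$, which is again nilpotent. The elements of $A^*$ that survive modulo $W$ are exactly the basic forms, so I would argue that the cohomology of $\frak q$ is the basic cohomology $H^*_B(M)$. Under the fundamental transverse K\"ahler hypothesis the $\partial_B\bar\partial_B$-lemma holds, so $H^*_B(M)$ enjoys the full K\"ahler package, in particular hard Lefschetz. A nilpotent Lie algebra whose cohomology satisfies hard Lefschetz is abelian --- the Lie-algebra form of the theorem that a K\"ahler nilmanifold is a torus (\cite{BG}, \cite{hasegawa}) --- so $\frak q$ is abelian and $[\n,\n]\subseteq\h$. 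Together with the centrality of $\h$ this gives $[\n,[\n,\n]]\subseteq[\n,\h]=0$, so $\n$ is $2$-step; equivalently $\n$ is a central extension of an abelian Lie algebra by $\h$, the infinitesimal form of a principal $2$-torus bundle over a torus. Hence $N$ is $2$-step and $M$ is diffeomorphic to a $2$-step nilmanifold.

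The hard part will be justifying the two geometric inputs while $J$ is not assumed left-invariant and $H$ need not be closed in $G_M$: that $\h$ is genuinely \emph{central} in $\n$, and that $H^*(\frak q)$ is identified with the basic cohomology $H^*_B(M)$. I expect the cleanest route is to remain inside the bigraded presentation $\n\cong\mathcal F(V)/\mathcal I$ supplied by the structure theorem preceding this proposition, where $V=V_{-1,0}\oplus V_{0,-1}\oplus V_{-1,-1}$ with $V_{-1,-1}\subset\h$ of dimension at most $2$, and to show that the generators of $\mathcal I$ of bidegrees $(-1,-2)$, $(-2,-1)$ and $(-2,-2)$ force every bracket of weight at most $-3$ to vanish once $\dim_\C W=1$. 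This vanishing is precisely the algebraic shadow of hard Lefschetz on $H^*_B(M)$ together with the centrality of $\h$, and it yields $[\n,[\n,\n]]=0$ directly.
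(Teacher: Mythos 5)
Your reduction to showing $[\n,[\n,\n]]=0$ via the Malcev Lie algebra is fine, and the outline (centrality of $\h$ in $\n$, plus ``$\n/\h$ is abelian'' via hard Lefschetz and Benson--Gordon) could in principle be completed, but the two steps you yourself flag as the hard part are genuine gaps, and the route you propose for closing them cannot work. First, the centrality argument as written is not valid: $H$ need not be closed and its leaves need not be compact, so there is no ``leafwise fundamental group''; one would have to pass to the closure $T=\overline{H}$ and invoke Conner--Raymond-type results for torus actions on aspherical manifolds to embed $\pi_1(T)$ into the center of $\Gamma$, and then still show that the resulting central subalgebra of $\n$ actually induces the foliation $\mathcal F_H$ (which is not left-invariant, since $J$ is not). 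Second, and more seriously, the identification $H^*(\n/\h)\cong H^*_B(M)$ does not follow from ``the elements of $A^*$ that survive modulo $W$ are the basic forms'': quotienting a DGA by the ideal generated by a degree-one subspace is not a homotopy-invariant operation, so the quasi-isomorphic models $A^*=H^*_B(M)\otimes\bigwedge W$ and $\bigwedge \n^*$ can perfectly well have non-quasi-isomorphic quotients, and no Nomizu-type theorem applies to a non-left-invariant foliation.

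The ``cleanest route'' you propose in your last paragraph --- deducing the vanishing of all brackets of weight $\le -3$ from the bigraded presentation $\mathcal F(H)/\mathcal I$ --- cannot succeed, because a Morgan-type presentation by itself places no bound whatsoever on the nilpotency step (a free group already has such a presentation with $\mathcal I=0$). The missing ingredient is exactly the numerical input on which the paper's proof runs: by Hasegawa's theorem \cite{hasegawa}, $\bigwedge\n^*$ \emph{is} the minimal model of $\Omega^*(\Gamma\backslash N)$, so Morgan's bigrading from Theorem \ref{theorem-MHS} lives on $\bigwedge\n^*_\C$ itself, and the isomorphism $H^{2n}(M,\C)\cong H^{n-1,n-1}_B(M)\otimes\bigwedge^2(W\otimes\C)$ forces the top class to have weight $2n+2$. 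Writing $m_w$ for the number of degree-one generators of weight $w$, this gives $\sum_w m_w=2n$ and $\sum_w w\,m_w=2n+2$, i.e.\ total excess weight $2$; minimality and the fact that the differential preserves the bigrading then force $d$ to vanish on the weight-one part and to map the (at most two-dimensional) higher-weight part into $\bigwedge^2$ of the weight-one part, which is precisely the dual statement that $\n$ is $2$-step. To salvage your strategy you must either supply this weight count or actually prove the two geometric identifications above; as written, neither is established.
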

\begin{proof}
Let  $M$ be a  compact complex $n$-dimensional manifold which admits a  special  transverse K\"ahler structure on a $k$-dimensional central foliation ${\mathcal  F}_{H}$.
Then we have an isomorphism
\[H^{2n}(M,\C)\cong H^{n-k,n-k}_{B}(M)\otimes \bigwedge^{2k} W_{\C}.
\]
Hence, for the mixed Hodge structure as in Theorem  \ref{theorem-MHS}, $H^{2n}(M,\C)$ is generated by elements of bi-degree $(n+k, n+k)$.

Consider nilmanifold $\Gamma\backslash N$.
Then the DGA $\bigwedge \frak n^{*}$ is the minimal model of $\Omega^{*}(\Gamma\backslash N)$ (see \cite{hasegawa}).
If $\Gamma\backslash N$ admits a special   transverse K\"ahler structure on a  central foliation ${\mathcal F}_{H}$,
then  by Theorem  \ref{theorem-MHS}, the minimal model $\bigwedge \frak n^{*}_{\C}$ of $\Omega^{*}(\Gamma\backslash N)$ admits a bigrading  $\bigwedge \frak n^{\ast}_{\C}=\bigoplus \mathcal M_{p,q}^{\ast}$.
Denote ${\mathcal M}^{\ast}_{w}=\bigoplus _{p+q=w} \mathcal M_{p,q}^{\ast}$ and $m_{\omega}=\dim {\mathcal M}^{1}_{w}$.
Since $\dim \mathcal M^{1}=\dim  \frak n^{\ast}_{\C}=2n$, we have $\sum_{\omega\ge 1}m_{w}=2n$.
Since we have $H^{2n}(M,\C)=\bigwedge ^{2n}\frak n^{\ast}_{\C} =\bigwedge^{2n}\bigoplus_{w}{\mathcal M}^{1}_{w}$,
we have  $\sum_{w\ge 1}wm_{w}=2n+2k$.
Let $k=1$.
Then $\sum_{w\ge 2}(w-1)m_{w}=2$ and hence
we have $m_{2}=2$ and $m_{i}=0$ for $i\le 3$, or  $m_{2}=0$, $m_{3}=1$ and $m_{i}=0$ for $i\le 4$.
We can say $d{\mathcal M}_{1}=0$ and $\bigwedge \frak n^{*}_{\C}=\bigwedge {\mathcal M}_{1}^{1}\otimes \bigwedge  V$ with $dV\subset \bigwedge^{2} {\mathcal M}_{1}^{1}$.
This implies that $\frak n$ is $2$-step.
\end{proof}

We suggest the  following problem.
\begin{pro}
For $s\ge 3$ and $k\ge 2$, does there  exist a $s$-step  nilmanifold admitting  a special  transverse K\"ahler structure on a $k$-dimensional non-regular central foliation ${\mathcal F}_{H}$?
\end{pro}

\subsection{Vaisman manifolds}\label{vais}

Let $(M,J)$ be a compact complex manifold with a Hermitian metric $g$. 
We consider the fundamental form  $\omega=g(-,J-)$ of $g$.
The metric $g$ is  locally conformal K\"ahler (LCK)  if we have a closed $1$-form $\theta$ (called   the Lee form) such that $d\omega=\theta\wedge \omega$.
It is known that if $\theta\not=0$ and $\theta$ is non-exact,
then $(M,J)$ does not admits a K\"ahler structure.
Let $\nabla$ be the Levi-Civita connection of $g$.
A LCK metric $g$ is a Vaisman metric if  $ \nabla \theta=0$.

If $g$ is Vaisman, then the following holds (see \cite{Tsu1}, \cite{Tsu2}): 
\begin{itemize}
\item Let $A$ and $B$ be the dual vector fields of $1$-forms $\theta$ and $-\theta \circ J$ with respect to $g$, respectively. Then $A=JB$, $L_{A}J=0$, $L_{B}J=0$, $L_{A}g=0$, $L_{B}g=0$ and $[A,B]$=0.

\item The holomorphic vector field $B-\sqrt{-1}A$ gives a holomorphic foliation $\mathcal F$.

\item The basic form $d(\theta\circ J)$ is a transverse K\"ahler structure.

\item
We denote by $\operatorname{Aut}_{0}(M, g)$ the identity component of the group of holomorphic isometries, by $\frak h$  the abelian sub-algebra $\langle A, B\rangle$ of the Lie algebra of  $\operatorname{Aut}_{0}(M, g)$ and by $H$ the connected Lie subgroup of $\operatorname{Aut}_{0}(M, g)$  which corresponds to $\frak h$.
Let $T$ be the closure of $H$ in $\operatorname{Aut}_{0}(M, g)$.
Then $T$ is a torus.

\end{itemize}

Thus a compact Vaisman manifold $M$ admits a  transverse K\"ahler structure on the $1$-dimensional fundamental central foliation ${\mathcal F}_{H}$.
Hence, taking $W=\langle \theta,\theta\circ J\rangle$  our results can be applied to a compact Vaisman manifold.
The cohomology of the DGA
\[ A^{*}=H_{B}^{\ast}(M)\otimes \bigwedge\langle  \theta,\theta\circ J\rangle
\]
is isomorphic to the de Rham cohomology of $M$  and
the cohomology of DBA
\[ B^{*,*}=H_{B}^{\ast,\ast}(M)\otimes \bigwedge\langle  \theta+\sqrt{-1}\theta\circ J, \theta-\sqrt{-1}\theta\circ J\rangle
\]
is isomorphic to the Dolbeault cohomology of $M$.
We can easily compute 
\[H^{1}(M,\C)=H^{1}_{B}(M)\oplus \langle \theta\rangle=H^{1,0}_{B}(M)\oplus H^{0,1}_{B}(M) \oplus \langle \theta\rangle.
\]
This implies well known fact that the first Betti number of a compact Vaisman manifold is odd (see \cite{Tsu1}).
We have the mixed Hodge structure
\[H^{1}(M,\C)=H^1_{1,0}\oplus H^1_{0,1}\oplus H^{1}_{1,1}
\]
with $\dim H^{1}_{1,1}=1$ as in Theorem \ref{theorem-MHS}.
We notice that Vaisman metrics are closely related to Sasakian structures.
We can also obtain nice de Rham models of Sasakian manifolds like the above DGA (see \cite{Ti}) and we can develop Morgan's mixed Hodge theory on Sasakian manifolds (see \cite{K}).

Since we have 
$\bar\partial  (\theta+\sqrt{-1}\theta\circ J)=\sqrt{-1}d(\theta\circ J)$ and $\bar\partial  (\theta-\sqrt{-1}\theta\circ J)=0$, we can easily obtain an isomorphism of DGA
\[ A^{*}\otimes \C\cong {\rm Tot}^{*}  B^{*,*}.
\]
Hence, by Theorem \ref{theorem-model}, we have the following (cf.\ \cite[Theorem 3.5]{Tsu1}). 
\begin{cor}
Let $M$ be a compact complex manifold.
We suppose that $M$ admits a Vaisman  metric.
Then the two DGAs $(\Omega^{\ast}(M)\otimes \C,d)$ and $(\Omega^{\ast}(M)\otimes \C,\bar\partial)$ are quasi-isomorphic.
In particular, there exists an isomorphism between the complex valued de Rham cohomology and the Dolbeault cohomology. 
\end{cor}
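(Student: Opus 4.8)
The plan is to chain together the two instances of Theorem~\ref{theorem-model} supplied by the Vaisman structure with one explicit isomorphism of differential graded algebras relating the de Rham model to the totalization of the Dolbeault model. First I would record the two models furnished by the transverse K\"ahler structure on the $1$-dimensional fundamental central foliation $\mathcal F_H$, using $W=\langle\theta,\theta\circ J\rangle$. Theorem~\ref{theorem-model} gives a quasi-isomorphism of DGAs between $(\Omega^{*}(M)\otimes\C,d)$ and $(A^{*}\otimes\C,d)$, where $A^{*}=H_{B}^{*}(M)\otimes\bigwedge\langle\theta,\theta\circ J\rangle$, and, after totalizing the Dolbeault model, a quasi-isomorphism between $({\rm Tot}^{*}\Omega^{*,*}(M),\bar\partial)=(\Omega^{*}(M)\otimes\C,\bar\partial)$ and $({\rm Tot}^{*}B^{*,*},\bar\partial)$, where $B^{*,*}=H_{B}^{*,*}(M)\otimes\bigwedge\langle z,\bar z\rangle$ with $z=\theta+\sqrt{-1}\,\theta\circ J$ and $\bar z=\theta-\sqrt{-1}\,\theta\circ J$. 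Here I would note that totalizing a DBA quasi-isomorphism yields a quasi-isomorphism of total complexes equipped with $\bar\partial$, since the total $\bar\partial$-cohomology is the direct sum of the bigraded Dolbeault cohomologies.

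The heart of the argument is to identify $A^{*}\otimes\C$ and ${\rm Tot}^{*}B^{*,*}$ as DGAs. On coefficients I would use the Hodge decomposition $H_{B}^{r}(M,\C)=\bigoplus_{p+q=r}H_{B}^{p,q}(M)$ established above for transversely K\"ahler central foliations to identify ${\rm Tot}^{*}H_{B}^{*,*}(M)$ with $H_{B}^{*}(M)\otimes\C$; on exterior generators I would use that $\langle z,\bar z\rangle=\langle\theta,\theta\circ J\rangle\otimes\C$, so that the two graded algebras share the same underlying structure and differ only in their differentials. To match the differentials I would invoke the two facts recorded just before the statement: the Lee form is closed, whence $d\theta=[d\theta]_{B}=0$ in $A^{*}$, while $d(\theta\circ J)$ is the transverse K\"ahler form, representing a class in $H_{B}^{1,1}(M)$, and correspondingly $\bar\partial z=\sqrt{-1}\,d(\theta\circ J)$ and $\bar\partial\bar z=0$. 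Defining $\Phi$ to be the above identification on $H_{B}^{*}(M)\otimes\C$ together with $\Phi(\theta)=\bar z$ and $\Phi(\theta\circ J)=-\sqrt{-1}\,z$ sends a basis of generators to a basis of generators, and, because both differentials vanish on the coefficient algebra, it satisfies $\bar\partial\Phi(\theta)=0=\Phi(d\theta)$ and $\bar\partial\Phi(\theta\circ J)=-\sqrt{-1}\cdot\sqrt{-1}\,[d(\theta\circ J)]_{B}=\Phi(d(\theta\circ J))$. Hence $\Phi$ extends multiplicatively to a DGA isomorphism $A^{*}\otimes\C\cong{\rm Tot}^{*}B^{*,*}$.

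Concatenating the three links gives the desired chain
\[(\Omega^{*}(M)\otimes\C,d)\simeq(A^{*}\otimes\C,d)\cong({\rm Tot}^{*}B^{*,*},\bar\partial)\simeq(\Omega^{*}(M)\otimes\C,\bar\partial),\]
and the ``in particular'' clause follows since quasi-isomorphic DGAs have isomorphic cohomology, identifying $H^{*}(M,\C)$ with the total Dolbeault cohomology $\bigoplus_{p+q=*}H^{p,q}(M)$. I expect the only genuinely delicate point to be the bookkeeping in the isomorphism $\Phi$: one must check that the identification of coefficient algebras through the Hodge decomposition is compatible with both the product and the (vanishing) differentials, and that the computation $\bar\partial z=\sqrt{-1}\,d(\theta\circ J)$ is carried out with consistent conventions for $\theta\circ J$ and for the bidegrees of $z$ and $\bar z$. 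Everything else is formal, being a matter of transporting differentials along an algebra isomorphism and totalizing bigraded quasi-isomorphisms.
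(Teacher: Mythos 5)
Your proposal is correct and follows essentially the same route as the paper: apply Theorem~\ref{theorem-model} to get the de Rham model $A^{*}=H_{B}^{*}(M)\otimes\bigwedge\langle\theta,\theta\circ J\rangle$ and the Dolbeault model $B^{*,*}$, then use $\bar\partial(\theta+\sqrt{-1}\,\theta\circ J)=\sqrt{-1}\,d(\theta\circ J)$ and $\bar\partial(\theta-\sqrt{-1}\,\theta\circ J)=0$ to produce a DGA isomorphism $A^{*}\otimes\C\cong{\rm Tot}^{*}B^{*,*}$, and chain the quasi-isomorphisms. The only difference is that the paper asserts this isomorphism exists ``easily,'' whereas you write it out explicitly (via $\Phi(\theta)=\bar z$, $\Phi(\theta\circ J)=-\sqrt{-1}\,z$ and the Hodge-decomposition identification of coefficients), which is a faithful filling-in of the same argument.
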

\begin{rem}
On compact K\"ahler manifold $M$, by the $\partial\bar\partial$-lemma, two DGAs $(\Omega^{\ast}(M)\otimes \C,d)$ and $(\Omega^{\ast}(M)\otimes \C,\bar\partial)$ are quasi-isomorphic (see \cite{NT}).
\end{rem}

\begin{ex}\label{hop}
Let $\Lambda=(\lambda_{1},\dots, \lambda_{n})$ be complex numbers so that $0<\vert \lambda_{n}\vert\le \dots \le \vert \lambda_{1}\vert<1$. A {\em primary Hopf manifold} $M_{\Lambda}$ is the quotient of $\C^{n}-\{0\}$ by the group generated by the transformation $(z_{1},\dots,z_{n})\mapsto (\lambda_{1}z_{1},\dots,\lambda_{n}z_{n})$.
It is known that any $M_{\Lambda}$ admits a Vaisman metric  (see\cite{KO}).
For any $\Lambda$, $M_{\Lambda}$ is diffeomorphic to $S^{1,2n-1}=S^{1}\times S^{2n-1}$.
On the other hand, the complex structure on $M_{\Lambda}$ varies.
If $\lambda_{n}= \dots = \lambda_{1}$, then $M_{\Lambda}$  is a holomorphic principal torus bundle over $\C P^{n-1}$.
Otherwise, any holomorphic principal torus bundle structure over $\C P^{n-1}$ does not exist on $M_{\Lambda}$.
By Example \ref{S1n} and the above arguments, we can obtain explicit representatives of de Rham, Dolbeault, basic de Rham and Basic Dolbeault cohomologies of $M_{\Lambda}$ by using a Vaisman metric on $M_{\Lambda}$.
\end{ex}


\begin{thebibliography}{99}
\bibitem{BC} O. Baues,  V. Cort\'es,
Aspherical K\"ahler manifolds with solvable fundamental group. Geom. Dedicata {\bf 122} (2006), 215--229.
\bibitem{BG}
Ch. Benson, C.~S. Gordon, K\"ahler and symplectic structures on nilmanifolds, {\em Topology} \textbf{27} (1988), no.~4, 513--518.

\bibitem{BM}
S. Bochner and D. Montgomery, Locally compact groups of differentiable transformations, Ann. of Math. (2) {\bf 47} (1946), 639--653. 
\bibitem{CE} E. Calabi, B. Eckmann, A class of compact, complex manifolds which are not algebraic. Ann. Math. {\bf58} (1953), 494--500.
\bibitem{CNMY}
B. Cappelletti-Montano, A. De Nicola, J. C. Marrero, I. Yudin
Almost formality of quasi-Sasakian and Vaisman manifolds with applications to nilmanifolds.  arXiv:1712.09949
\bibitem{Del}
P. Deligne,  Th\'eorie de Hodge. II.  Inst. Hautes \'Etudes Sci. Publ. Math. No. {\bf 40} (1971), 5--57. 
\bibitem{DGMS}P. Deligne, P. Griffiths, J. Morgan, and D. Sullivan, Real homotopy theory of Kahler manifolds, Invent. Math. {\bf 29} (1975), no. 3, 245--274.
\bibitem{EKA}
A. El Kacimi-Alaoui, 
Op\'erateurs transversalement elliptiques sur un feuilletage riemannien et applications. 
Compositio Math. {\bf 73} (1990), no. 1, 57--106. 

\bibitem{Fe}Y. F$\acute {\rm e}$lix, J. Oprea and D. Tanr$\acute {\rm e}$, Algebraic Models in Geometry, Oxford Graduate Texts in
Mathematics 17, Oxford University Press 2008.
\bibitem{GMo} P. Griffiths, J. Morgan,  Rational homotopy theory and differential forms. Second edition. Progress in Mathematics, {\bf 16}. Springer, New York, 2013.

\bibitem{hasegawa}
K. Hasegawa, Minimal models of nilmanifolds, {\em Proc. Amer. Math. Soc.} \textbf{106} (1989), no.~1, 65--71.
\bibitem{Hochschild} G. Hochschild, The structure of Lie groups, Holden-Day Inc., San Francisco, 1965. 
\bibitem{I} H. Ishida, Torus invariant transverse K\"{a}hler foliations, to appear in Trans. Amer. Math. Soc., available at {\tt arXiv:1505.06035}

\bibitem{KO} Y. Kamishima, L Ornea,
Geometric flow on compact locally conformally Kähler manifolds. Tohoku Math. J. (2) {\bf 57} (2005), no. 2, 201--221.
\bibitem{K} H. Kasuya,
Mixed Hodge structures and Sullivan's minimal models of Sasakian manifolds,  Ann. Inst. Fourier (Grenoble), {\bf 67} (2017), no. 6,  2533--2546.
\bibitem{Le}F. Lescure, Exemples d'actions induites non r\'esolubles sur la cohomologie de Dolbeault.

Topology {\bf 35} (1996), no. 3, 561--581. 
\bibitem{LMN}  J. J. Loeb, M. Manjarin, M. Nicolau,
Complex and CR structures on compact Lie groups associated to abelian actions. 
Ann. Global Anal. Geom. {\bf 32} (2007), no. 4, 361--378.
\bibitem{M}  L. Meersseman, A new geometric construction of compact complex manifolds in any dimension. Math. Ann. {\bf 317} (2000), no. 1, 79--115.
\bibitem{MV}  L. Meersseman, A. Verjovsky, 
Holomorphic principal bundles over projective toric varieties. 
J. Reine Angew. Math. {\bf 572} (2004), 57--96. 
\bibitem{Mor}
J. W. Morgan, The algebraic topology of smooth algebraic varieties. \textit{ Inst. Hautes \'{E}tudes Sci. Publ. Math.} No. {\bf 48} (1978), 137--204.
\bibitem{NT} J.  Neisendorfer, L. Taylor,
 Dolbeault homotopy theory. Trans. Amer. Math. Soc.  {\bf 245} (1978), 183--210. 
\bibitem{Or} P. Orlik, 
Seifert manifolds. Lecture Notes in Mathematics, Vol. {\bf 291}. Springer-Verlag, Berlin-New York, 1972.
\bibitem{PS} R. S. Palais, T. E. Stewart,  Torus bundles over a torus. Proc. Amer. Math. Soc. {\bf 12} 1961 26--29. 
\bibitem{Sul} D. Sullivan, Infinitesimal computations in topology. \textit{ Inst. Hautes \'Etudes Sci. Publ. Math.} No. {\bf 47} (1977), 269--331 (1978).
\bibitem{Ta}
D. Tanr\'e,  Mod\'ele de Dolbeault et fibr\'e holomorphe.  J. Pure Appl. Algebra {\bf 91} (1994), no. 1-3, 333–345.
\bibitem{Ti}
A. M. Tievsky, Analogues of K\"ahler	geometry on Sasakian manifolds, Ph.D. Thesis, Massachusetts Institute of Technology,	2008. Available in http://dspace.mit.edu/handle/1721.1/45349
\bibitem{Tsu1} K. Tsukada,
 Holomorphic forms and holomorphic vector fields on compact generalized Hopf manifolds. Compositio Math. {\bf 93} (1994), no. 1, 1--22.

\bibitem{Tsu2}
K. Tsukada,
The canonical foliation of a compact generalized Hopf manifold. 
Differential Geom. Appl. {\bf 11} (1999), no. 1, 13--28. 
\end{thebibliography}
\end{document}